\documentclass[a4paper,11pt]{article}
\textwidth380pt
\hoffset-40pt
\voffset+0pt
\headsep-20pt
\textheight510pt

\usepackage{amsmath, amsfonts, amscd, amssymb, amsthm, enumerate, xypic}

\def\NT{\text{NT}}

\newcommand{\Mat}{\operatorname{M}}
\newcommand{\Mats}{\operatorname{S}}
\newcommand{\Mata}{\operatorname{A}}

\newcommand{\GL}{\operatorname{GL}}
\newcommand{\Ker}{\operatorname{Ker}}
\newcommand{\Vect}{\operatorname{span}}
\newcommand{\im}{\operatorname{Im}}
\newcommand{\Co}{\operatorname{Cong}}
\newcommand{\urk}{\operatorname{urk}}
\newcommand{\tr}{\operatorname{tr}}

\newcommand{\rk}{\operatorname{rk}}

\renewcommand{\setminus}{\smallsetminus}

% Ensembles classiques de nombres.

\def\F{\mathbb{F}}
\def\K{\mathbb{K}}

% Majuscules calligraphiées

\def\calA{\mathcal{A}}
\def\calB{\mathcal{B}}

\def\calD{\mathcal{D}}

\def\calH{\mathcal{H}}

\def\calJ{\mathcal{J}}

\def\calL{\mathcal{L}}
\def\calM{\mathcal{M}}
\def\calN{\mathcal{N}}

\def\calR{\mathcal{R}}
\def\calS{\mathcal{S}}
\def\calT{\mathcal{T}}
\def\calU{\mathcal{U}}
\def\calV{\mathcal{V}}
\def\calW{\mathcal{W}}

% Intervalles d'entiers

\def\lcro{\mathopen{[\![}}
\def\rcro{\mathclose{]\!]}}

\theoremstyle{definition}
\newtheorem{Def}{Definition}
\newtheorem{Not}[Def]{Notation}

\theoremstyle{plain}
\newtheorem{theo}{Theorem}[section]
\newtheorem{prop}[theo]{Proposition}
\newtheorem{cor}[theo]{Corollary}
\newtheorem{lemma}[theo]{Lemma}
\newtheorem{claim}{Claim}
\newtheorem{step}{Step}

\theoremstyle{plain}

\theoremstyle{remark}
\newtheorem{Rems}{Remarks}
\newtheorem{Rem}[Rems]{Remark}

\title{Primitive spaces of matrices with upper rank two over the field with two elements}
\author{Cl\'ement de Seguins Pazzis\footnote{Universit\'e de Versailles Saint-Quentin-en-Yvelines, Laboratoire de Math\'ematiques
de Versailles, 45 avenue des Etats-Unis, 78035 Versailles cedex, France}
\footnote{e-mail address: dsp.prof@gmail.com}}

\begin{document}

\thispagestyle{plain}

\maketitle

\begin{abstract}
For fields with more than $2$ elements, the classification of the vector spaces of matrices with rank at most $2$ is already known.
In this work, we complete that classification for the field $\F_2$. We apply the results to obtain
the classification of triples of locally linearly dependent operators over $\F_2$, the classification of the
$3$-dimensional subspaces of $\Mat_3(\F_2)$ in which no matrix has a non-zero eigenvalue, and the classification of the
$3$-dimensional affine spaces that are included in the general linear group $\GL_3(\F_2)$.
\end{abstract}

\vskip 2mm
\noindent
\emph{AMS Classification:} 15A04, 15A30, 15A03

\vskip 2mm
\noindent
\emph{Keywords:} spaces of bounded rank matrices, field with two elements, trivial spectrum spaces.

% Relecture 4/4 effectuée

\section{Introduction}

Let $n$ and $p$ be non-negative integers and $\K$ be an arbitrary field.
Given integers $i$ and $j$ such that $i \leq j$, we denote by $\lcro i,j\rcro$ the set of all integers $k$ such that $i \leq k \leq j$.

Given vector spaces $U$ and $V$ over $\K$, one denotes by $\calL(U,V)$ the space of all linear maps from $U$ to $V$.

We denote by $\Mat_{n,p}(\K)$ the space of all $n \times p$ matrices with entries in $\K$,
by $\Mat_n(\K)$ the space of all $n \times n$ matrices with entries in $\K$, by $\Mats_n(\K)$ (respectively, by $\Mata_n(\K)$, by $T_n^+(\K)$, by $\NT_n(\K)$) the space of all $n \times n$ symmetric (respectively, alternating, upper-triangular, and strictly upper-triangular) matrices with entries in $\K$.
Recall that an alternating matrix is a skew-symmetric matrix in which the diagonal entries equal zero.
The group of all invertible matrices
of $\Mat_n(\K)$ is denoted by $\GL_n(\K)$.
Given a matrix $M \in \Mat_{n,p}(\K)$, the entry of $M$ at the $(i,j)$-spot will be denoted by $m_{i,j}$ or, alternatively, by $M_{i,j}$.
Given a matrix $M \in \Mat_{n,p}(\K)$, a scalar $\lambda \in \K$ and distinct integers $i$ and $j$ in $\lcro 1,n\rcro$ (respectively, in $\lcro 1,p\rcro$),
the row operation $L_i \leftarrow L_i+\lambda L_j$ (respectively, the column operation $C_i \leftarrow C_i+\lambda C_j$), takes $M$ to the matrix with the same rows (respectively, columns) except the $i$-th one, which equals the sum of the $i$-th row (respectively, column) of $M$
with the product of the $j$-th row (respectively, column) of $M$ by $\lambda$.
One defines the row swap $L_i \leftrightarrow L_j$ (respectively, the column swap $C_i \leftrightarrow C_j$) likewise.

The upper rank of a linear subspace $\calV$ of $\Mat_{n,p}(\K)$ is the maximal rank for a matrix in $\calV$:
We denote it by $\urk(\calV)$. Two linear subspaces $\calV_1$ and $\calV_2$ of $\Mat_{n,p}(\K)$ are called equivalent,
and we write $\calV_1 \sim \calV_2$, when there are non-singular matrices $P \in \GL_n(\K)$ and $Q \in \GL_p(\K)$ such that
$\calV_2=P\,\calV_1\,Q$, meaning that $\calV_1$ and $\calV_2$ represent the same space of linear operators between finite-dimensional
vector spaces in different choices of bases of the source and target spaces.
If $n=p$ we say that $\calV_1$ and $\calV_2$ are similar, and we write $\calV_1 \simeq \calV_2$,
when the above condition holds with $Q=P^{-1}$.

A linear subspace $\calV$ of $\Mat_{n,p}(\K)$ with upper rank $r$ is called \textbf{primitive} when it satisfies the following conditions:
\begin{enumerate}[(i)]
\item No non-zero vector belongs to the kernel of every matrix of $\calV$.
\item The span of all the ranges of the matrices of $\calV$ is $\K^n$.
\item $\calV$ is not equivalent to a space $\calT$ of matrices of the form
$M=\begin{bmatrix}
H(M) & [?]_{n \times 1}
\end{bmatrix}$ where $\urk H(\calT) \leq r-1$.
\item $\calV$ is not equivalent to a space $\calT$ of matrices
of the form
$M=\begin{bmatrix}
H(M) \\
[?]_{1 \times p}
\end{bmatrix}$ where $\urk H(\calT) \leq r-1$.
\end{enumerate}
We say that $\calV$ is \textbf{reduced} whenever it satisfies conditions (i) and (ii),
and \textbf{semi-primitive} whenever it satisfies conditions (i), (ii) and (iii).
Note that those definitions are invariant under replacing $\calV$ with an equivalent subspace.
Thus, we can define primitive/semi-primitive/reduced operator spaces between finite-dimensional vector spaces.

\paragraph{}
Primitive spaces of bounded rank matrices were initially introduced by Atkinson and Lloyd \cite{AtkinsonPrim,AtkLloydPrim}
and later rediscovered  by Eisenbud and Harris \cite{EisenbudHarris}. In particular, Atkinson proved a general classification theorem \cite{AtkinsonPrim}
for all primitive subspaces of $\Mat_{n,p}(\K)$
with upper rank $r$ and for which $n>1+\dbinom{r}{2}$ or $p>1+\dbinom{r}{2}$, provided that $\K$ has more than $r$ elements.
In particular, for $r=2$, his theorem yields that, up to equivalence, the space $\Mata_3(\K)$ of all $3 \times 3$ alternating matrices
is the sole primitive matrix space with upper rank $2$ provided that the underlying field has more than $2$ elements.

Recent new insights have put the theory of primitive spaces back into the spotlight.
First of all, Atkinson's classification theorem for primitive spaces (and, more precisely, its generalization to semi-primitive spaces as given in \cite{dSPLLD2})
has been shown to yield a generalization of Gerstenhaber's theorem for fields with large cardinality, and we believe
that this new insight should help one have a better grasp of the structure of large spaces of nilpotent matrices \cite{dSPAtkinsontoGerstenhaber}.
On the other hand, semi-primitive matrix spaces are deeply connected to minimal locally linearly dependent spaces of operators,
and classification theorems for the former have been recently used to expand our understanding of the latter \cite{dSPLLD2}.

Considering the above, there is a renewed motivation for finding classification theorems for primitive spaces over small fields.
An earlier article of Beasley \cite{Beasley} contained some information on spaces of rank $2$ matrices
over $\F_2$ but fell short of giving a complete classification. It is the main purpose of the present work to achieve that classification.

\paragraph{}
For $(s,t)\in \lcro 0,n\rcro \times \lcro 0,p\rcro$, we denote by $\calR(s,t)$ the space of all $n \times p$ matrices of the form
$$\begin{bmatrix}
[?]_{s \times t} & [?]_{s \times (p-t)} \\
[?]_{(n-s) \times t} & [0]_{(n-s) \times (p-t)}
\end{bmatrix}.$$
When we use this notation, the number of rows and columns will always be obvious from the context.
If $s+t \leq \min(n,p)$, then $\calR(s,t)$ has upper rank $s+t$. In particular, if $n \geq 2$ and $p \geq 2$
the space $\calR(1,1)$ has upper rank $2$.

It is known that a space with upper rank $1$ is either equivalent to a subspace of
$\calR(1,0)$ or to a subspace of $\calR(0,1)$ (this classical result dates back to Issai Schur).
From there, one can determine the non-primitive reduced subspaces with upper rank $2$.
Indeed, let $\calV$ be such a space.
If $\calV$ is not semi-primitive, then it is equivalent to a subspace $\calV'$ of $\Mat_{n,p}(\K)$
in which every matrix splits up as $M=\begin{bmatrix}
H(M) & ?
\end{bmatrix}$ and $H(\calV') \subset \Mat_{n,p-1}(\K)$ has upper rank $1$;
Then, $\calV$ is equivalent to a subspace of $\calR(1,1)$ or to a subspace of $\calR(0,2)$, whether
$H(\calV')$ is equivalent to a subspace of $\calR(1,0)$ or to one of $\calR(0,1)$; In the second case $p\leq 2$ as $\calV$ is reduced, and hence $p=2$.
If the transpose of $\calV$ is not semi-primitive, then either $n=2$ or $\calV$ is
equivalent to a subspace of $\calR(1,1)$.
Conversely, if $p=2$ then $\calV$ cannot be semi-primitive (just delete the second column),
and the same holds if $\calV$ is equivalent to a subspace of $\calR(1,1)$ (delete the first column from the matrices of $\calR(1,1)$).

Thus:

\begin{prop}\label{basicsemiprimitiveprop}
Let $\calV$ be a reduced linear subspace of $\Mat_{n,p}(\K)$ with upper rank $2$.
Then, $\calV$ is semi-primitive if and only if $p>2$ and $\calV$ is not equivalent to a subspace of $\calR(1,1)$. \\
Moreover, $\calV$ is primitive if and only if $n>2$, $p>2$ and $\calV$ is not equivalent to a subspace of $\calR(1,1)$. \\
Thus, $\calV$ is semi-primitive if and only if $n=2$ or $\calV$ is primitive.
\end{prop}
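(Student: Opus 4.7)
The plan is to characterize, for a reduced space $\calV$ of upper rank $2$, when each of conditions (iii) and (iv) fails, using Schur's classical theorem on upper rank $1$ spaces as the key input, and then to read off all three equivalences.

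First, I would analyze the failure of condition (iii). Suppose it fails; then up to equivalence every matrix of $\calV$ has the form $[H(M)\mid c(M)]$ with $H(\calV) \subset \Mat_{n,p-1}(\K)$ of upper rank at most $1$. By Schur's theorem, $H(\calV)$ is equivalent to a subspace of $\calR(1,0)$ or of $\calR(0,1)$ inside $\Mat_{n,p-1}(\K)$. Transporting that equivalence back to $\calV$, the first case yields $\calV$ equivalent to a subspace of $\calR(1,1) \subset \Mat_{n,p}(\K)$, while in the second case $\calV$ is equivalent to a space whose matrices are supported only in columns $1$ and $p$; in that subcase $e_2,\ldots,e_{p-1}$ lie in the common kernel, so reducedness forces $p \leq 2$, and having upper rank $2$ then pins $p = 2$. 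For the converse, if $p = 2$ then taking $H(M)$ to be the first column of $M$ gives $\urk H(\calV) \leq 1$; and if $\calV \sim $ subspace of $\calR(1,1)$, deleting the first column yields a space of upper rank at most $1$. This establishes the semi-primitive characterization.

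For the primitive characterization, I would apply the same analysis to $\calV^T \subset \Mat_{p,n}(\K)$, since condition (iv) for $\calV$ is just condition (iii) for $\calV^T$. Transposition preserves reducedness and upper rank, and direct inspection shows that the transpose of $\calR(1,1)$ is again $\calR(1,1)$ in the corresponding size, so the dualized criterion reads: $\calV$ satisfies (iv) iff $n > 2$ and $\calV$ is not equivalent to a subspace of $\calR(1,1)$. Combining with the criterion for (iii) yields the primitive characterization. The final equivalence then follows by inspection: once $\calV$ is semi-primitive one has $p > 2$ and the $\calR(1,1)$ obstruction is already absent, so within the semi-primitive class, primitivity is precisely the condition $n > 2$.

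The main technical point is the case split after invoking Schur in the first step. The $\calR(0,1)$ subcase is the delicate one, because it is there that reducedness must be used quantitatively to force $p = 2$; the $\calR(1,0)$ subcase and both converse implications are routine bookkeeping with block decompositions, and the transposition step is a formal symmetry.
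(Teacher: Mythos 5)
Your proof follows the paper's argument essentially verbatim: the same appeal to Schur's classification of upper-rank-one spaces, the same dichotomy between the $\calR(1,0)$ and $\calR(0,1)$ subcases (with reducedness forcing $p=2$ in the latter), the same converse constructions by deleting a column, and the same passage to $\calV^T$ to handle condition (iv). The only caveat concerns the final sentence, where both you and the paper actually establish the implication from semi-primitivity to ``$n=2$ or primitive'' together with its converse for primitive spaces, while the remaining implication (that $n=2$ alone forces semi-primitivity) is left unaddressed --- and indeed a reduced upper-rank-two subspace of $\Mat_{2,p}(\K)$ that is equivalent to a subspace of $\calR(1,1)$, such as $\calR(1,1)$ itself in $\Mat_{2,3}(\K)$, is not semi-primitive.
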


The classification, up to equivalence, of the reduced linear subspaces of $\calR(1,1)$ is an easy exercise:

\begin{prop}
Assume that $n \geq 2$ and $p \geq 2$.
For every reduced linear subspace $\calV$ of $\calR(1,1) \subset \Mat_{n,p}(\K)$, there is a unique integer $r \in \lcro 0,\min(n-1,p-1)\rcro$
such that $\calV$ is equivalent to one (and only one) of the following spaces:
$$\Biggl\{\begin{bmatrix}
a & X^T & L \\
X & [0]_{r \times r} & [0]_{r \times (p-r-1)} \\
C & [0]_{(n-r-1) \times r} & [0]_{(n-r-1) \times (p-r-1)}
\end{bmatrix} \mid (a,X,C,L) \in \K \times \K^r \times \K^{n-r-1} \times \Mat_{1,p-r-1}(\K)
\Biggr\}$$
and
$$\Biggl\{\begin{bmatrix}
0 & X^T & L \\
X & [0]_{r \times r} & [0]_{r \times (p-r-1)} \\
C & [0]_{(n-r-1) \times r} & [0]_{(n-r-1) \times (p-r-1)}
\end{bmatrix} \mid (X,C,L) \in \K^r \times \K^{n-r-1} \times \Mat_{1,p-r-1}(\K)
\Biggr\}.$$
\end{prop}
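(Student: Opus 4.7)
The plan is to bring $\calV$ into one of the two canonical forms by a sequence of base changes that respect the ``cross'' block shape of $\calR(1,1)$. For each $M \in \calV$, write
$M = \begin{bmatrix} a(M) & L(M) \\ C(M) & 0 \end{bmatrix}$
and set $\calC := \{C(M) : M \in \calV\} \subset \K^{n-1}$ together with $\calL := \{L(M) : M \in \calV\} \subset \Mat_{1,p-1}(\K)$. The first step is to observe that $\calC$ and $\calL$ are both non-zero, as otherwise every matrix of $\calV$ would have rank at most $1$, contradicting reducedness (a direct check shows that no subspace of $\calR(1,1)$ consisting only of rank-$\leq 1$ matrices can satisfy both (i) and (ii) when $n,p \geq 2$). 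Then computing the sum of ranges and the intersection of kernels of the matrices of $\calV$ shows that reducedness forces $\calC = \K^{n-1}$ (from (ii)) and $\calL = \Mat_{1,p-1}(\K)$ (from (i)).

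Next, I set $\calC_0 := \{C(M) : L(M) = 0\}$ and $\calL_0 := \{L(M) : C(M) = 0\}$. The assignment $C(M) \mapsto L(M) \bmod \calL_0$ is easily seen to be well-defined and to induce a linear isomorphism $\varphi : \calC/\calC_0 \to \calL/\calL_0$. Let $r$ denote their common dimension; it lies in $\lcro 0, \min(n-1,p-1) \rcro$. Choosing complements $\calC_1$ of $\calC_0$ in $\calC$ and $\calL_1$ of $\calL_0$ in $\calL$ with $\varphi(\calC_1) = \calL_1$, then changing bases on rows $2, \ldots, n$ and columns $2, \ldots, p$ (putting $\calC_1$ and $\calL_1$ first), and finally using a $\GL_r$-change on $\calL_1$ to turn $\varphi$ into the canonical identification, I may assume that every $M \in \calV$ reads
\[
M = \begin{bmatrix} a(M) & X(M)^T & L'(M) \\ X(M) & 0 & 0 \\ C'(M) & 0 & 0 \end{bmatrix},
\]
where the triple $(X, C', L')$ defines a surjection from $\calV$ onto $\K^r \oplus \K^{n-r-1} \oplus \Mat_{1,p-r-1}(\K)$.

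The last piece of data is $a(M)$ as a function of the rest. The kernel $\calV_0$ of the surjection $M \mapsto (X(M), C'(M), L'(M))$ sits inside $\K E_{1,1}$, and so has dimension $0$ or $1$. If $\dim \calV_0 = 1$, then $a$ ranges freely and independently of the rest, and $\calV$ is the first canonical form. If $\dim \calV_0 = 0$, then $a(M)$ is a fixed linear function $\alpha(X, C', L')$; to cancel $\alpha$ I apply a combination of the row operations $L_1 \leftarrow L_1 + \lambda_i L_i$ for $i \geq 2$ and the column operations $C_1 \leftarrow C_1 + \mu_j C_j$ for $j \geq 2$. Since the rows and columns of index $\geq 2$ are zero outside the first column/row, these operations leave every entry other than the $(1,1)$-entry unchanged, and a short calculation shows that together they can shift $a$ by an arbitrary linear combination of the components of $X$, $C'$, $L'$; choosing them to cancel $\alpha$ produces the second canonical form.

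For uniqueness of the pair $(r, \mathrm{type})$, I rely on two equivalence invariants of $\calV$: its dimension $\dim \calV$, and the dimension $d(\calV)$ of the linear span of its rank-$\leq 1$ matrices. On the canonical forms, a $2 \times 2$ minor such as $\det \begin{bmatrix} a & X_i \\ X_i & 0 \end{bmatrix} = -X_i^2$ forces $X = 0$ in any rank-$\leq 1$ matrix, after which the rank-$\leq 1$ locus is readily seen to equal $\{X = 0, C' = 0\} \cup \{X = 0, L' = 0\}$. One then computes $\dim \calV = n+p-r-1$ or $n+p-r-2$ depending on type, and $d(\calV) = n+p-2r-1$ or $n+p-2r-2$ correspondingly; the pair $(\dim \calV, d(\calV))$ separates all admissible $(r, \mathrm{type})$. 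The main obstacle, in my view, is the bookkeeping in the normalization step, where one must simultaneously diagonalize the two splittings $\calC = \calC_1 \oplus \calC_0$ and $\calL = \calL_1 \oplus \calL_0$ and arrange $\varphi$ to be the identity matrix in the resulting bases; once this is in place the rest is linear-algebraic routine.
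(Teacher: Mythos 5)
Your proof is correct and complete. The paper itself offers no argument here -- it dismisses the proposition as ``an easy exercise'' -- so there is nothing to compare against; your route (the induced isomorphism $\varphi:\calC/\calC_0\to\calL/\calL_0$ defining $r$, simultaneous normalization of the two splittings, absorption of the $(1,1)$-entry by row/column operations when $E_{1,1}\notin\calV$, and the pair of invariants $\dim\calV$ and the dimension of the span of the rank-$\leq 1$ matrices, whose difference recovers $r$) is a clean and valid way to carry out that exercise. The only points worth tightening are cosmetic: $\varphi(\calC_1)=\calL_1$ is an abuse of notation for the induced map on a chosen complement, and the opening claim that $\calC\neq 0$ and $\calL\neq 0$ follows most directly from conditions (ii) and (i) respectively (a space supported on the first row fails (ii) since $n\geq 2$; one supported on the first column fails (i) since $p\geq 2$).
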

From that point on, we shall focus on classifying primitive matrix spaces with upper rank $2$ over $\F_2$.

\begin{center}
In the rest of the article, we consider only the situation of a field $\K$ with two elements, denoted by $\F_2$.
\end{center}

It is known that for every field $\K$ the space $\Mata_3(\K)$ is primitive with upper rank $2$
(see \cite{AtkLloydPrim}), and this holds in particular for $\K=\F_2$.
Now, we introduce three additional examples of primitive spaces with upper rank $2$ over $\F_2$.
To simplify the discourse, it is convenient to describe such matrix spaces by generic matrices: Recall
that a generic matrix of a linear subspace $\calV$ of $\Mat_{n,p}(\K)$ is a matrix of the form
$\mathbf{x}_1A_1+\cdots+\mathbf{x}_m A_m$, where $\mathbf{x}_1,\dots,\mathbf{x}_m$ are independent indeterminates and
$(A_1,\dots,A_m)$ is a basis of $\calV$.

\begin{Not}
We define three linear subspaces of $\Mat_3(\F_2)$ by generic matrices in the following array:
\begin{center}
\begin{tabular}{| c || c | c | c |}
\hline
Space & $\calJ_3(\F_2)$ & $\calU_3(\F_2)$ & $\calV_3(\F_2)$ \\
\hline
Generic matrix & $\begin{bmatrix}
\mathbf{a} & \mathbf{c} & \mathbf{d} \\
0 & \mathbf{a}+\mathbf{b} & \mathbf{e} \\
0 &  0  & \mathbf{b}
\end{bmatrix}$ &
$\begin{bmatrix}
0 & \mathbf{a} & \mathbf{a}+\mathbf{c} \\
\mathbf{a} & 0 & \mathbf{b} \\
\mathbf{a}+\mathbf{b} &  \mathbf{c}  & 0
\end{bmatrix}$ &
$\begin{bmatrix}
0 & \mathbf{a} & \mathbf{c}+\mathbf{d} \\
\mathbf{c} & 0 & \mathbf{b} \\
\mathbf{a}+\mathbf{b} &  \mathbf{d}  & 0
\end{bmatrix}$ \\
\hline
\end{tabular}
\end{center}
\end{Not}

Alternatively, $\calJ_3(\F_2)$ can be seen as the space of all upper-triangular $3 \times 3$ matrices with trace zero,
whereas $\calV_3(\F_2)$ can be seen as the space of all matrices $M=(m_{i,j}) \in \Mat_3(\F_2)$ with diagonal zero
and $m_{1,2}+m_{2,3}+m_{3,1}=m_{3,2}+m_{2,1}+m_{1,3}=0$.
Note that $\calU_3(\F_2)$ is a linear subspace of $\calV_3(\F_2)$.

Given three scalars $a,b,c$ in $\F_2$ with $a+b+c=0$, one of them must be zero whence $abc=0$.
Computing the determinant, it is then obvious that every matrix in $\calJ_3(\F_2)$ is singular, and so is
every matrix in $\calV_3(\F_2)$ (or in $\calU_3(\F_2)$).

Now, we state our three main results:

\begin{prop}\label{U3V3structure}
The spaces $\calU_3(\F_2)$ and $\calV_3(\F_2)$ are primitive subspaces of $\Mat_3(\F_2)$
with upper rank $2$. Moreover, every non-zero matrix of one of those spaces has rank $2$.
\end{prop}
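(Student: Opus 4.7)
The plan is to verify, in order: that every non-zero matrix of both spaces has rank exactly $2$ (which gives the upper-rank claim), that both spaces are reduced (conditions (i)--(ii)), and that neither space is equivalent to a subspace of $\calR(1,1)$. By Proposition \ref{basicsemiprimitiveprop} applied with $n=p=3$, these together yield primitivity.

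For the rank claim, since $\calU_3(\F_2) \subset \calV_3(\F_2)$ and every matrix of $\calV_3(\F_2)$ is already known to be singular, it suffices to rule out rank $1$ in $\calV_3(\F_2)$. Over $\F_2$, a rank-$1$ matrix has the form $u v^T$ with $u,v \in \F_2^3 \setminus \{0\}$; the vanishing of the diagonal forces the supports of $u$ and $v$ to be disjoint non-empty subsets of $\{1,2,3\}$, leaving exactly twelve candidate matrices (six with $|\mathrm{supp}\,u|=|\mathrm{supp}\,v|=1$ and six more with sizes $(1,2)$ or $(2,1)$). A short inspection shows that in each of the twelve cases at least one of the two defining relations $m_{1,2}+m_{2,3}+m_{3,1}=0$ or $m_{1,3}+m_{2,1}+m_{3,2}=0$ is violated, since the six off-diagonal positions split three--three across the two constraints. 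As rank-$2$ matrices visibly exist in both spaces (take the basis matrix associated with $\mathbf{a}$), the upper rank is exactly $2$.

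For the reduced conditions together with the non-equivalence with $\calR(1,1)$, I would exhibit three basis matrices in each space whose kernels sum to $\F_2^3$. In $\calV_3(\F_2)$ the basis matrices labelled $\mathbf{a},\mathbf{b},\mathbf{d}$ have kernels $\Vect(e_3),\Vect(e_2),\Vect(e_1)$; in $\calU_3(\F_2)$ the three basis matrices have kernels $\Vect(e_2+e_3),\Vect(e_2),\Vect(e_1)$. The intersection of the kernels is $\{0\}$ in both cases, giving (i), and a quick look at the columns of the same matrices shows that the ranges span $\F_2^3$, giving (ii). Now unwinding the definition, $\calV$ is equivalent to a subspace of $\calR(1,1)$ if and only if there exist a hyperplane $H \subset \F_2^3$ and a line $L \subset \F_2^3$ such that $M(H) \subset L$ for every $M \in \calV$. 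For any non-zero $M$ (necessarily of rank $2$), $\dim M(H) = 2 - \dim(H \cap \Ker M)$, so the inclusion $M(H) \subset L$ forces $\Ker M \subset H$. But the three kernels listed above generate $\F_2^3$, hence no hyperplane can contain them all---a contradiction.

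The only step requiring genuine care is the enumeration of twelve rank-$1$ matrices in the first paragraph; it is combinatorial rather than conceptual, and the symmetry between the two trace-like relations and between the six off-diagonal positions keeps the bookkeeping short. Everything else rests on the same core observation---that three basis matrices with one-dimensional kernels summing to $\F_2^3$ simultaneously settle conditions (i) and (ii) and produce the obstruction to equivalence with $\calR(1,1)$.
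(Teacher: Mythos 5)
Your proof is correct, but it takes a genuinely different route from the paper's in both main steps. For the rank claim, the paper does not enumerate: it observes that a matrix $M$ of rank at most $1$ with $\tr M=0$ satisfies $M^2=0$, which together with the zero diagonal yields $m_{i,j}m_{j,k}=0$ for all distinct $i,j,k$; hence at most one of $m_{1,2},m_{2,3},m_{3,1}$ is non-zero, and the defining relation then forces all three to vanish (likewise for the other triple). Your twelve-case enumeration of the matrices $uv^T$ with disjoint supports reaches the same conclusion and is sound, just more combinatorial --- one expository caveat: the ``three--three split'' of the off-diagonal positions only settles the six cases with $|\mathrm{supp}\,u|=|\mathrm{supp}\,v|=1$; for the support patterns $(1,2)$ and $(2,1)$ you also need that the two off-diagonal entries of a given row (or column) land in \emph{different} constraints, which is true and easy but worth stating. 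For primitivity, the paper proves the stronger statement that $\dim \calU_3(\F_2)x=2$ for \emph{every} non-zero $x$ (Lemma \ref{U3trans}), by computing the space of operators $M\mapsto Mx$ and observing that it is equivalent to $\calU_3(\F_2)$ itself; conditions (i) and (ii) and the non-equivalence with $\calR(1,1)$ then all drop out at once, and $\calV_3(\F_2)$ inherits primitivity as an overspace with the same upper rank. You instead verify (i) and (ii) by hand and obstruct $\calR(1,1)$ via the correct observation that $M(H)\subset L$ forces $\Ker M\subset H$ for a rank-$2$ matrix $M$, while the kernels of three of your basis matrices span $(\F_2)^3$. This is a legitimate and more elementary substitute; what it does not deliver is the full transitivity statement $\dim\calU_3(\F_2)x=2$ for all $x\neq 0$, which the paper reuses later (for instance to separate Case (iii) from Case (ii) in the classification, and in the dual-operator-space computations of Section \ref{LLDsection}).
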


\begin{prop}\label{J3structure}
The space $\calJ_3(\F_2)$ is a primitive subspace of $\Mat_3(\F_2)$ with upper rank $2$. \\
A linear subspace of $\calJ_3(\F_2)$ is primitive with upper rank $2$ if and only if, for all $(a,b)\in (\F_2)^2$,
it contains at least one matrix of the form
$\begin{bmatrix}
a & ? & ? \\
0 & a+b & ? \\
0 & 0 & b
\end{bmatrix}$.
\end{prop}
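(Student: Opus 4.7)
My plan is to derive the whole statement from the characterization in its second sentence, since $\calJ_3(\F_2)$ itself trivially contains a matrix of each diagonal type. The text already notes that every element of $\calJ_3(\F_2)$ is singular, so $\urk \calJ_3(\F_2) \leq 2$; the element obtained by setting $a=1$ and $b=c=d=e=0$ (namely $E_{11}+E_{22}$) witnesses upper rank exactly $2$, and the same matrix belongs to any subspace $\calV$ satisfying the diagonal condition.

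For sufficiency, I assume $\calV$ contains matrices $M_{110}, M_{011}, M_{101}$ with the three nonzero diagonal types $(1,1,0), (0,1,1), (1,0,1)$. A direct reading of the generic matrix shows each has rank exactly $2$, with respective one-dimensional kernels $\F_2(ce+d,e,1)^T$, $\F_2 e_1$, and $\F_2(c'',1,0)^T$ (primed letters denoting off-diagonal entries of the matrix concerned). Conditions (i) and (ii) then fall out at once: $M_{011}v=0$ kills $v_2$ and $v_3$, after which $M_{110}v=0$ kills $v_1$; meanwhile the ranges of $M_{110}$ and $M_{011}$ already span $\F_2^3$. By Proposition~\ref{basicsemiprimitiveprop}, primitivity reduces to showing that $\calV$ is not equivalent to a subspace of $\calR(1,1)$. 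I will argue by contradiction: such an equivalence yields a hyperplane $W=\Ker\phi\subset\F_2^3$ and a line $L\subset\F_2^3$ with $M(W)\subset L$ for every $M\in\calV$. Since each of the three matrices has rank $2$, the restriction $M|_W$ can land in the one-dimensional $L$ only if $\Ker M\subset W$; substituting the three kernels into $\phi=0$ yields in turn $\phi_1=0$ (from $M_{011}$), then $\phi_2=0$ (from $M_{101}$), and finally $\phi_3=0$ (from $M_{110}$), contradicting $\phi\neq 0$.

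For necessity I argue the contrapositive by enumerating the four proper subspaces of the two-dimensional image $D = \{(0,0,0), (1,1,0), (0,1,1), (1,0,1)\}$ of the diagonal map $\calJ_3(\F_2)\to\F_2^3$. If the image of $\calV$ is $\{0\}$ or $\{0,(1,1,0)\}$, every matrix of $\calV$ has a zero last row, so the span of ranges lies inside $\F_2 e_1+\F_2 e_2$ and condition (ii) fails. If the image is $\{0,(0,1,1)\}$, every matrix has a zero first column, so $e_1$ is a common kernel vector and (i) fails. The only remaining case is the image $\{0,(1,0,1)\}$: there every $M\in\calV$ satisfies $Me_1, Me_2\in \F_2 e_1$, so swapping the first and third source basis vectors carries $\calV$ into $\calR(1,1)$, and primitivity fails by Proposition~\ref{basicsemiprimitiveprop}.

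The main difficulty is the $\calR(1,1)$-exclusion in the sufficient direction; it works cleanly only because the three nonzero diagonal representatives supply kernels that each kill one coordinate of $\phi$ in turn. Had any single nonzero diagonal been unavailable, the corresponding coordinate of $\phi$ would be free and the argument would break down at exactly the case identified on the necessity side, confirming that the diagonal condition is tight.
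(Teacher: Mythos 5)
Your proof is correct, and its overall architecture (case analysis on the image of the diagonal map $\delta\colon\calV\to\calH$, with primitivity reduced via Proposition~\ref{basicsemiprimitiveprop} to excluding equivalence with a subspace of $\calR(1,1)$) matches the paper's. The necessity direction and the verification of conditions (i) and (ii) are essentially identical to the paper's argument. Where you genuinely diverge is the key exclusion step in the sufficiency direction. The paper works on the target side: it shows $\calV x\neq \F_2 e_1$ for $x\notin\F_2 e_1$, deduces that the line $D$ cannot be $\F_2 e_1$, hence $e_1\notin P$, and then contradicts condition (ii) via $\calV x\subset D+\F_2 e_1$ for all $x$ in $(\F_2)^3=\F_2 e_1\oplus P$. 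You work on the source side: since each of the three diagonal representatives has rank exactly $2$, the hyperplane $W$ would have to contain all three of their kernels, which you compute explicitly and which jointly span $(\F_2)^3$, killing $\phi$ coordinate by coordinate. Your variant is arguably more direct and makes visible exactly why the full diagonal image is needed (each nonzero diagonal type contributes one kernel, and losing any one of them frees one coordinate of $\phi$); the paper's variant avoids computing kernels at all and only uses the reducedness conditions it has already established.

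One small slip worth fixing: the matrix $E_{1,1}+E_{2,2}$ need \emph{not} belong to every subspace satisfying the diagonal condition --- the condition only guarantees some matrix with diagonal $\begin{bmatrix}1&1&0\end{bmatrix}^T$, possibly with nonzero off-diagonal entries. This does not affect the argument, since you separately verify that any such matrix $M_{110}$ has rank exactly $2$ (the upper-left $2\times 2$ block is unimodular and the matrix is singular), which is all that is needed to conclude $\urk\calV=2$.
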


\begin{theo}[Classification of primitive spaces with upper rank $2$ over $\F_2$]\label{classrank2F2}
Let $\calV$ be a primitive subspace of $\Mat_{n,p}(\F_2)$ with upper rank $2$.
Then, $n=p=3$ and exactly one of the following four conditions holds:
\begin{enumerate}[(i)]
\item $\calV$ is equivalent to a linear subspace of $\calJ_3(\F_2)$;
\item $\calV$ is equivalent to $\Mata_3(\F_2)$;
\item $\calV$ is equivalent to $\calU_3(\F_2)$;
\item $\calV$ is equivalent to $\calV_3(\F_2)$.
\end{enumerate}
\end{theo}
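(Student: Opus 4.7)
The plan is to proceed in two large stages: first, pin down that $n = p = 3$; second, classify the primitive $\calV \subset \Mat_3(\F_2)$ of upper rank~$2$ by splitting on whether or not $\calV$ contains a matrix of rank~$1$.

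For the dimension bound, Proposition~\ref{basicsemiprimitiveprop} already delivers $n \geq 3$ and $p \geq 3$, so what remains is $n \leq 3$ and $p \leq 3$. I would pick a rank-$2$ matrix $M_0 \in \calV$ and, via equivalence, normalize it so that its top-left $2 \times 2$ block is $I_2$ and it vanishes elsewhere; then, for every $M \in \calV$, the twin constraints $\rk(M) \leq 2$ and $\rk(M+M_0) \leq 2$ yield identities on the blocks of $M$ in the induced decomposition of sizes $(2, n-2) \times (2, p-2)$. Assuming $p \geq 4$ for contradiction, those identities combined with primitivity condition~(iii) should force a trailing column to ``decouple'' from the others, contradicting~(iii); a symmetric argument with~(iv) yields $n \leq 3$.

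With $n = p = 3$, split into two cases. In Case~A, $\calV$ contains a rank-$1$ matrix; by Schur's classification of upper-rank-$1$ spaces, up to equivalence we may normalize this matrix $M_1$ to have a single non-zero entry, in position $(1,1)$. For any $M \in \calV$ the constraints $\rk(M) \leq 2$ and $\rk(M + M_1) \leq 2$ force the bottom-right $2 \times 2$ block of $M$ to be singular; iterating with further rank-$1$ elements and leveraging primitivity to block trivial reductions should progressively pin down the entries, ultimately identifying $\calV$, up to equivalence, with a subspace of $\calJ_3(\F_2)$. Proposition~\ref{J3structure} then classifies the primitive such subspaces. In Case~B, every non-zero $M \in \calV$ has rank exactly $2$; after normalizing $M_0$ as before, each other $M \in \calV$ has a top-left $2 \times 2$ block $A(M)$ which, in combination with the rank constraints on $M$, $M+M_0$, and on sums with other rank-$2$ elements, falls into one of a small list of rigid patterns. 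A finite enumeration, exploiting that $\GL_2(\F_2)$ has just six elements, should match $\calV$ with exactly one of $\Mata_3(\F_2)$, $\calU_3(\F_2)$, or $\calV_3(\F_2)$.

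The main obstacle will be the dimension-bounding stage and Case~B of the classification: over $\F_2$ the pencil trick ``consider $\mu M_0 + M$ for all scalars $\mu$'' that drives most classical bounded-rank arguments is unavailable, so each such step must be replaced by finite case analysis. Much of the work will consist in carefully normalizing bases of $\calV$ and then verifying, configuration by configuration, that each remaining possibility either matches one of the four named spaces or violates primitivity.
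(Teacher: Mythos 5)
Your overall architecture --- normalize a rank-$2$ matrix to $J_2$, derive block identities from $\rk(M)\le 2$ and $\rk(M+J_2)\le 2$, then split on whether $\calV$ contains a rank-$1$ matrix --- matches the paper's, but the plan leaves exactly the hard steps as ``should work'' and omits the ideas that actually make them work. For the reduction to $n=p=3$, the mechanism is not that a trailing column ``decouples'': the paper first shows that the lower-right block space $D(\calV)$ has upper rank at most $1$ and then dimension at most $1$, and the latter step is delicate --- it needs the fact that every hyperplane of $\Mat_2(\F_2)$ is congruent to $\Mats_2(\F_2)$ or $T_2^+(\F_2)$ and hence contains several rank-$2$ matrices, plus (in the case $D(\calV)=\{0\}$) a lemma asserting that a space over $\F_2$ cannot be the union of three proper subspaces one of which has codimension $\ge 2$. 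None of this is recoverable from ``the identities combined with condition (iii)''. Likewise, in your Case~A the real content is the intermediate lemma: if some non-zero $x$ satisfies $\dim\calV x\le 1$, then $\calV$ embeds in $\calJ_3(\F_2)$; its proof goes through the diagonal vectors of an upper-triangular form and the observation that the only hyperplane of $(\F_2)^3$ meeting all three coordinate hyperplane complements but avoiding $(1,1,1)$ is $x_1+x_2+x_3=0$. Your sketch of ``iterating with further rank-$1$ elements'' does not reach this.

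The most serious gap is Case~B. A ``finite enumeration'' over subspaces of $\Mat_3(\F_2)$ all of whose non-zero elements have rank $2$ is not feasible without first bounding $\dim\calV$, and the paper's bound $3\le\dim\calV\le 4$ comes from a double-counting argument on the incidence set $\{(M,x): M\in\calV\setminus\{0\},\,x\ne 0,\,Mx=0\}$, giving $n_2=1+3\cdot 2^{4-\dim\calV}$ for the number of $x$ with $\dim\calV x=2$; this simultaneously supplies the transitivity facts ($\dim\calV x=2$ for all $x$ when $\dim\calV=3$) that drive the subsequent normalizations. That counting idea is absent from your plan and is not a routine consequence of the rank constraints. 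Finally, you do not address the ``exactly one'' clause: in particular $\Mata_3(\F_2)$ and $\calU_3(\F_2)$ are both $3$-dimensional rank-$\overline{2}$ spaces, and distinguishing them requires a genuine invariant (the paper uses whether $\calV\,\Ker M=\im M$ for every non-zero $M$). As written, the proposal is a plausible outline of the same route the paper takes, but the load-bearing lemmas are missing rather than merely deferred.
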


In Section \ref{structureofJ3section}, we shall also describe, up to equivalence, all the primitive spaces that are equivalent to a linear subspace of $\calJ_3(\F_2)$.

\begin{Rem}\label{transposeremark}
Note that if $\calV$ is a primitive subspace of $\Mat_{n,p}(\K)$, then its transpose is also primitive with the same upper rank.
It is obvious that each one of the spaces $\Mata_3(\F_2)$, $\calU_3(\F_2)$ and $\calV_3(\F_2)$ is equal to its transpose.
On the other hand, one sees that $\calJ_3(\F_2)$ is equivalent (and even similar) to its transpose by noting that
$\calJ_3(\F_2)^T$ is the space of all lower-triangular matrices of $\Mat_3(\F_2)$ with trace zero, and hence
it equals $K \calJ_3(\F_2) K^{-1}$ for the matrix $K:=\begin{bmatrix}
0 & 0 & 1 \\
0 & 1 & 0 \\
1 & 0 & 0
\end{bmatrix}$.
\end{Rem}

Let us immediately discuss some corollaries of the above results:

\begin{cor}
Let $\calV$ be a primitive $4$-dimensional subspace of $\Mat_{n,p}(\F_2)$.
Assume that $\calV$ is a rank--$2$ space, i.e.\ all its non-zero matrices have rank $2$.
Then, $\calV$ is equivalent to $\calV_3(\F_2)$.
\end{cor}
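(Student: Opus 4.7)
The plan is to apply the classification Theorem \ref{classrank2F2} to $\calV$ and eliminate every candidate except $\calV_3(\F_2)$ by a dimension argument, combined with a short rank count inside $\calJ_3(\F_2)$.

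First, I would rule out possibilities (ii) and (iii) of Theorem \ref{classrank2F2} on the basis of dimension alone. The space $\Mata_3(\F_2)$ of $3\times 3$ alternating matrices has dimension $3$, and $\calU_3(\F_2)$, whose generic matrix involves only the three parameters $\mathbf{a},\mathbf{b},\mathbf{c}$, also has dimension $3$. Since $\dim \calV=4$, case (iv), giving $\calV\sim\calV_3(\F_2)$, is consistent with the dimension, and Proposition \ref{U3V3structure} already guarantees that $\calV_3(\F_2)$ is a rank-$2$ space. Thus it suffices to exclude case (i): no $4$-dimensional linear subspace $\calW$ of $\calJ_3(\F_2)$ can have all its nonzero matrices of rank $2$.

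For this, consider the linear map on $\calJ_3(\F_2)$ that reads off the pair of corner diagonal entries $(m_{1,1},m_{3,3})$; its kernel is exactly the $3$-dimensional space $\NT_3(\F_2)$ of strictly upper-triangular matrices. Since $\dim \calJ_3(\F_2)=5$, any $4$-dimensional subspace $\calW$ meets $\NT_3(\F_2)$ in a subspace of dimension at least $2$. The crux is then the following elementary observation over $\F_2$: among the $8$ elements of $\NT_3(\F_2)$, exactly $2$ have rank $2$ (those with both $(1,2)$ and $(2,3)$ entries nonzero), while $5$ have rank $1$ and one is zero. A $2$-dimensional $\F_2$-subspace has $3$ nonzero elements; since only $2$ rank-$2$ candidates are available, every $2$-dimensional subspace of $\NT_3(\F_2)$ contains a nonzero matrix of rank $1$. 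Consequently $\calW$ contains such a matrix, contradicting the rank-$2$ hypothesis on $\calV$.

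There is no substantial obstacle: the only possibly delicate step is the rank census inside $\NT_3(\F_2)$, but over $\F_2$ this is a direct inspection of $8$ matrices. Once case (i) is ruled out, Theorem \ref{classrank2F2} leaves only $\calV\sim \calV_3(\F_2)$.
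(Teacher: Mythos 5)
Your proof is correct and follows essentially the same route as the paper: apply Theorem \ref{classrank2F2}, discard cases (ii) and (iii) by dimension, and discard case (i) by showing that a $4$-dimensional subspace of the $5$-dimensional space $\calJ_3(\F_2)$ must meet a prescribed low-rank subspace and thereby contain a rank $1$ matrix. The only difference is that the paper intersects with the $2$-dimensional space of matrices supported on the $(1,2)$ and $(1,3)$ entries, whose non-zero elements all have rank $1$, which yields the rank $1$ matrix immediately, whereas your intersection with the $3$-dimensional space $\NT_3(\F_2)$ requires the additional (correct) census that $\NT_3(\F_2)$ contains only two rank $2$ matrices.
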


To see this, it suffices to show that $\calV$ cannot be equivalent to a subspace of $\calJ_3(\F_2)$.
This is easily obtained by noting that every $4$-dimensional subspace of $\calJ_3(\F_2)$
is a hyperplane of it, whence it must have a non-zero common vector with the $2$-dimensional subspace of all
matrices of the form $\begin{bmatrix}
0 & ? & ? \\
0 & 0 & 0 \\
0 & 0 & 0
\end{bmatrix}$, yielding a rank $1$ matrix in $\calV$.

\vskip 3mm
In \cite{Beasley}, Beasley stated without proof that the two $4$-dimensional subspaces
$$\Biggl\{\begin{bmatrix}
a & c & c  \\
d & a+b & c \\
d & d & b
\end{bmatrix} \mid (a,b,c,d)\in \F_2^4\Biggr\} \quad \text{and} \quad
\Biggl\{\begin{bmatrix}
a & 0 & c  \\
d & a+b & 0 \\
0 & c+d & b
\end{bmatrix} \mid (a,b,c,d)\in \F_2^4\Biggr\}$$
are inequivalent rank-$2$ spaces. However, although it is true that both are rank-$2$ spaces,
the above corollary shows that they are equivalent as one easily checks that both are primitive.

\vskip 3mm
Note finally that Theorem \ref{classrank2F2} yields a quick proof of a result of \cite{dSPclass} on the classification of
subspaces of singular matrices of $\Mat_3(\F_2)$ with dimension at least $5$. Indeed, given such a subspace $\calV$:
\begin{itemize}
\item Either $\calV$ is non-reduced, and hence it is equivalent to a subspace of $\calR(2,0)$ or to a subspace of $\calR(0,2)$.
\item Or $\calV$ is reduced and non-primitive, and hence it is equivalent to a subspace of $\calR(1,1)$;
in that case, as $\dim \calR(1,1)=5$, we see that $\calV$ is equivalent to $\calR(1,1)$ itself.
\item Or $\calV$ is primitive, and hence, as $\dim \calV \geq 5$, Theorem \ref{classrank2F2}
yields that $\calV$ is equivalent to a linear subspace of $\calJ_3(\F_2)$, and hence it is equivalent to $\calJ_3(\F_2)$
because $\dim \calJ_3(\F_2)=5$.
\end{itemize}

The article is laid out as follows: In Section \ref{structureofV3section}, we prove Proposition \ref{U3V3structure}.
Section \ref{proofmainclassification} is devoted to the proof of Theorem \ref{classrank2F2}.
In Section \ref{structureofJ3section}, we prove Proposition \ref{J3structure} and we classify all the primitive subspaces of $\calJ_3(\F_2)$.
In the last section, we use our results to classify triples of locally linearly dependent operators over $\F_2$ (Section \ref{LLDsection}), to classify
the $3$-dimensional linear subspaces of $\Mat_3(\F_2)$ in which no matrix has $1$ as eigenvalue (Section \ref{trivialspectrumsection}), and to classify the $3$-dimensional affine subspaces of $\Mat_3(\F_2)$ that are included in $\GL_3(\F_2)$ (Section \ref{affinenonsingularsection}).

\section{The structure of $\calU_3(\F_2)$ and $\calV_3(\F_2)$}\label{structureofV3section}

Remember that $\calV_3(\F_2)$ is the space of all matrices $M \in \Mat_3(\F_2)$
with diagonal zero and $m_{1,2}+m_{2,3}+m_{3,1}=0=m_{3,2}+m_{2,1}+m_{1,3}$, and that
$\calU_3(\F_2)$ is a hyperplane of $\calV_3(\F_2)$.

\begin{lemma}\label{V3rank}
Every non-zero matrix of $\calV_3(\F_2)$ has rank $2$.
\end{lemma}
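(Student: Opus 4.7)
The plan is to observe first that by the determinant computation already done in the text (three scalars of $\F_2$ summing to $0$ must have a zero among them), every matrix of $\calV_3(\F_2)$ is singular, so the rank is at most $2$. Since a non-zero matrix has rank at least $1$, it only remains to rule out rank $1$: I would prove that no non-zero rank-$1$ matrix of $\Mat_3(\F_2)$ belongs to $\calV_3(\F_2)$.

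The key observation is that every rank-$1$ matrix of $\Mat_3(\F_2)$ is of the form $uv^T$ for some non-zero vectors $u,v$ in $\F_2^3$. The zero-diagonal condition in the definition of $\calV_3(\F_2)$ reads $u_iv_i=0$ for $i\in\{1,2,3\}$, which (over $\F_2$) means that the supports $S:=\{i:u_i=1\}$ and $T:=\{i:v_i=1\}$ are disjoint non-empty subsets of $\{1,2,3\}$. The possible shapes are therefore very limited: either $|S|=|T|=1$ with $S\ne T$, or $\{|S|,|T|\}=\{1,2\}$; the cases $|S|=3$ or $|T|=3$ are excluded by disjointness, and $|S|=|T|=2$ by the pigeonhole principle.

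In each of these few cases the matrix $uv^T$ is completely explicit (just the indicator matrix of $S\times T$), and I would check by inspection that at least one of the two trace-type conditions $m_{1,2}+m_{2,3}+m_{3,1}=0$ or $m_{3,2}+m_{2,1}+m_{1,3}=0$ fails. For the singleton case the unique non-zero entry lies on exactly one of the two off-diagonal ``triangles'' $\{(1,2),(2,3),(3,1)\}$ or $\{(3,2),(2,1),(1,3)\}$ and breaks the corresponding relation; for $|S|=1$, $|T|=2$ (say $S=\{i\}$), the two non-zero entries both sit on the same triangle determined by the row $i$, so that triangle-sum equals $1$; the case $|S|=2$, $|T|=1$ is handled identically (or obtained by transposition, since $\calV_3(\F_2)^T=\calV_3(\F_2)$).

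The verification is really just a small bookkeeping exercise, not a conceptual difficulty; the only step that requires any care is enumerating the possible $(S,T)$ correctly and keeping the two triangle relations straight. Once this is done, $uv^T\notin\calV_3(\F_2)$ in every case, so $\calV_3(\F_2)$ contains no non-zero rank-$1$ matrix, which combined with the singularity observation yields the lemma.
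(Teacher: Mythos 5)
Your overall strategy is sound and genuinely different from the paper's: the paper notes that $\rk M\leq 1$ and $\tr M=0$ force $M^2=\tr(M)M=0$, reads off $m_{i,j}m_{j,k}=0$ for distinct $i,j,k$ from the zero diagonal, and then uses a parity argument on each triangle; you instead factor a rank-one matrix as $uv^T$ and enumerate the possible disjoint supports. That enumeration is correct ($(|S|,|T|)\in\{(1,1),(1,2),(2,1)\}$), and the singleton--singleton case is handled properly. However, your verification of the case $|S|=1$, $|T|=2$ is wrong as stated, and it is wrong in a way that matters. You claim the two non-zero entries of row $i$ ``both sit on the same triangle,'' but in fact each row contains exactly one position from each triangle: for instance row $1$ contributes $(1,2)$ to the cycle $\{(1,2),(2,3),(3,1)\}$ and $(1,3)$ to the cycle $\{(3,2),(2,1),(1,3)\}$. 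Worse, if your claim were true the argument would collapse rather than succeed: two entries equal to $1$ sitting in the same triangle would give a triangle-sum of $1+1=0$ over $\F_2$, so that relation would be \emph{satisfied} and you would have produced a non-zero rank-one matrix apparently lying in $\calV_3(\F_2)$. The inference ``both on the same triangle, so that triangle-sum equals $1$'' is therefore not a valid step.

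The repair is immediate: since the two unit entries of the row (or, in the $|S|=2$, $|T|=1$ case, of the column) lie one in each triangle, \emph{both} triangle-sums equal $1$, so both defining relations of $\calV_3(\F_2)$ fail and the matrix is excluded. With that correction your case analysis goes through and yields the lemma. Compared with the paper's proof, your route is more hands-on (explicit rank-one factorization and a finite check) whereas the paper's is slicker and avoids any enumeration, but both are elementary; the episode illustrates that the ``small bookkeeping exercise'' you flagged as the only delicate point is exactly where the care was needed.
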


\begin{proof}
We have already shown that every matrix of $\calV_3(\F_2)$ is singular.
Let $M \in \calV_3(\F_2)$ be with $\rk M \leq 1$. As $\tr M=0$, we deduce that $M^2=0$.
As the diagonal of $M$ is zero, this yields $m_{i,j}m_{j,k}=0$ for all distinct $i,j,k$ in $\lcro 1,3\rcro$.
In particular, among $m_{1,2},m_{2,3},m_{3,1}$, at most one entry equals $1$, and as their sum equals zero, we deduce that they are all zero.
Similarly, we obtain $m_{2,1}=m_{1,3}=m_{3,2}=0$, whence $M=0$.
\end{proof}

\begin{lemma}\label{U3trans}
For all $x \in (\F_2)^3 \setminus \{0\}$, one has $\dim \calU_3(\F_2)x=2$.
\end{lemma}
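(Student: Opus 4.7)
The plan is to study the linear evaluation map
$$\phi_x \colon \calU_3(\F_2) \to \F_2^3,\qquad M \mapsto Mx,$$
for a fixed nonzero $x=(x_1,x_2,x_3)^T \in \F_2^3$. First I would choose the ordered basis of $\calU_3(\F_2)$ obtained by setting each of the three parameters $\mathbf a, \mathbf b, \mathbf c$ in turn to $1$ (the others to $0$) in the generic matrix; with respect to this basis and the standard basis of $\F_2^3$, the matrix of $\phi_x$ is
$$A_x = \begin{bmatrix} x_2+x_3 & 0 & x_3 \\ x_1 & x_3 & 0 \\ x_1 & x_1 & x_2 \end{bmatrix}.$$
Expanding along the first row gives $\det A_x=(x_2+x_3)x_2x_3+x_3(x_1^2+x_1x_3)$, which collapses to $0$ modulo~$2$ after using $x_i^2=x_i$. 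This yields the upper bound $\dim \calU_3(\F_2)\,x \leq 2$.

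To turn the inequality into an equality I would invoke Lemma \ref{V3rank}: since $\calU_3(\F_2) \subseteq \calV_3(\F_2)$, every nonzero matrix of $\calU_3(\F_2)$ has rank $2$, so its kernel is a line of $\F_2^3$. This allows me to define
$$\kappa \colon \calU_3(\F_2) \setminus \{0\} \to \Pgros(\F_2^3),\qquad M \mapsto \Ker M.$$
Both sides have exactly $7$ elements (the source because $\dim \calU_3(\F_2)=3$, the target because it is the projective plane over $\F_2$). The upper bound just obtained reads $\dim \ker \phi_x \geq 1$ for every $x\ne 0$, i.e.\ every line of $\F_2^3$ occurs as the kernel of some nonzero element of $\calU_3(\F_2)$; in other words, $\kappa$ is surjective.

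A surjection between finite sets of the same cardinality is automatically a bijection, so $\kappa$ is bijective. Consequently, for each nonzero $x$ there is \emph{exactly one} nonzero matrix in $\calU_3(\F_2)$ annihilating $x$, which gives $\dim \ker \phi_x=1$ and therefore $\dim \calU_3(\F_2)\,x=2$. The only genuinely computational step is the vanishing of $\det A_x$ modulo~$2$; the main obstacle is rather conceptual, namely to resist the temptation of a case analysis over the seven nonzero vectors of $\F_2^3$ and instead bootstrap Lemma~\ref{V3rank} together with the cardinality coincidence $|\calU_3(\F_2)\setminus\{0\}|=|\Pgros(\F_2^3)|=7$ into the matching lower bound.
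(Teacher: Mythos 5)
Your proof is correct, and it diverges from the paper's after the common opening move. Both arguments begin by writing down the same $3\times 3$ matrix $A_x$ of the evaluation map $M\mapsto Mx$ in the basis of $\calU_3(\F_2)$ given by the parameters $\mathbf{a},\mathbf{b},\mathbf{c}$. The paper then observes that the whole space $\{A_x \mid x\in(\F_2)^3\}$ is carried onto $\calU_3(\F_2)$ itself by the operations $L_3\leftarrow L_3+L_2$, $C_2\leftrightarrow C_1$, $C_3\leftrightarrow C_2$; since that space has dimension $3$ and, by Lemma~\ref{V3rank}, every non-zero matrix of $\calU_3(\F_2)$ has rank $2$, every $A_x$ with $x\neq 0$ has rank exactly $2$, giving upper and lower bounds in one stroke. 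You instead split the two bounds: the vanishing of $\det A_x$ (your computation is right once $x_i^2=x_i$ is used) gives $\dim\calU_3(\F_2)x\leq 2$, and the lower bound comes from the pigeonhole argument on the kernel map $\kappa:\calU_3(\F_2)\setminus\{0\}\to\Pgros(\F_2^3)$, which is well defined by Lemma~\ref{V3rank}, surjective by the upper bound, hence bijective since both sets have $7$ elements. Your route spares you from exhibiting the explicit equivalence of matrix spaces, at the price of a determinant expansion and a counting step; the paper's route is slightly more work here but establishes the stronger fact that the dual operator space $\widehat{\calU_3(\F_2)}$ is equivalent to $\calU_3(\F_2)$ itself, a self-duality that the paper reuses later (in the classification of trivial spectrum spaces, Case~2 of Section~\ref{trivialspectrumsection}), and which your argument does not yield.
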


\begin{proof}
Denote by $\widehat{\calU_3(\F_2)}$ the space of all linear operators $M \in \calU_3(\F_2) \mapsto MX \in (\F_2)^3$, with $X \in (\F_2)^3$.
For $X=\begin{bmatrix}
x \\
y \\
z
\end{bmatrix}$ in $(\F_2)^3$, the operator $M \mapsto MX$ reads
$$M=\begin{bmatrix}
0 & a & a+c \\
a & 0 & b \\
a+ b & c & 0
\end{bmatrix} \mapsto \begin{bmatrix}
a(y+z)+c z\\
ax+b z \\
ax+b x+ cy
\end{bmatrix}=\begin{bmatrix}
y+z & 0 & z \\
x & z & 0 \\
x &  x  & y
\end{bmatrix} \times \begin{bmatrix}
a \\
b \\
c
\end{bmatrix}.$$
Thus, in a well chosen basis of $\calU_3(\F_2)$ and in the canonical basis of $\K^3$, the space
$\widehat{\calU_3(\F_2)}$ is represented by the matrix space
$$\Biggl\{\begin{bmatrix}
y+z & 0 & z \\
x & z & 0 \\
x &  x  & y
\end{bmatrix} \mid (x,y,z)\in (\F_2)^3 \Biggr\}.$$
By successively applying the row operation $L_3 \leftarrow L_3+L_2$ and the column operations $C_2 \leftrightarrow C_1$
and $C_3 \leftrightarrow C_2$, this space is seen to be equivalent to $\calU_3(\F_2)$.
As this space has dimension $3$ and every non-zero matrix of  $\calU_3(\F_2)$ has rank $2$, we deduce that $\dim \calU_3(\F_2)x=2$
for all non-zero vectors $x \in (\F_2)^3$.
\end{proof}

As $\calU_3(\F_2)^T=\calU_3(\F_2)$, it follows that $\dim \calU_3(\F_2)^T x=2$ for all non-zero vectors $x \in (\F_2)^3$,
whence $\calU_3(\F_2)$ is reduced.
It also follows from Lemma \ref{U3trans} that $\calU_3(\F_2)$ is not equivalent to a linear subspace of $\calR(1,1)$.
Therefore, $\calU_3(\F_2)$ is primitive and it ensues that $\calV_3(\F_2)$ is also primitive since it contains $\calU_3(\F_2)$ and
shares the same upper rank. Thus, Proposition \ref{U3V3structure} is established.

\section{Proof of the main classification theorem}\label{proofmainclassification}

This section is devoted to the proof of our main classification theorem, that is Theorem \ref{classrank2F2}.
First of all, we shall prove that cases (i) to (iv) are pairwise incompatible. Then, we will examine two special cases with $n=p=3$.
Afterwards, we will prove that $\Mat_{n,p}(\F_2)$ has a primitive subspace with upper rank $2$ only if $n=p=3$.
Finally, we will classify the primitive subspaces of $\Mat_3(\F_2)$ with upper rank $2$.

\subsection{Incompatibility between Cases (i) to (iv)}

To see that no two cases of Cases (i) to (iv) in Theorem \ref{classrank2F2} can occur simultaneously,
note that, whenever $\calV$ falls into one of Cases (ii) to (iv), we have
$\dim \calV x \geq 2$ for every non-zero vector $x \in (\F_2)^3$, which rules Case (i) out.

Case (iv) is incompatible with Cases (ii) and (iii) because $\dim \calV_3(\F_2)=4$,
whereas $\dim \calU_3(\F_2)=\dim \Mata_3(\F_2)=3$.

Finally, Case (iii) is incompatible with Case (ii) because
if Case (ii) holds, for every $M \in \calV \setminus \{0\}$, the non-zero vector $x$ of $\Ker M$
satisfies $\calV x= \im M$ (indeed, in the special case when $\calV=\Mata_3(\F_2)$, we have
$\im M=\{x\}^\bot=\calV x$ where $\bot$ refers to the canonical bilinear form $(X,Y) \mapsto X^TY$ on $(\F_2)^3$),
whereas this is not always the case for the space
$\calU_3(\F_2)$. Indeed, the matrix $M=\begin{bmatrix}
0 & 1 & 1 \\
1 & 0 & 0 \\
1 & 0 & 0
\end{bmatrix}$ belongs to $\calU_3(\F_2)$, the non-zero vector $x=\begin{bmatrix}
0 & 1 & 1
\end{bmatrix}^T$ belongs to its kernel, but we have $\calU_3(\F_2) x=\bigl\{\begin{bmatrix}
a & b & a
\end{bmatrix}^T \mid (a,b) \in (\F_2)^2\bigr\}$, which is obviously unequal to $\im M$.

\subsection{Two basic lemmas}

\begin{lemma}\label{transrank1}
Let $\calV$ be a primitive subspace of $\Mat_3(\F_2)$ with upper rank $2$.
Assume that there is a non-zero vector $x \in (\F_2)^3$ such that $\dim \calV x\leq 1$.
Then $\calV$ is equivalent to a subspace of $\calJ_3(\F_2)$.
\end{lemma}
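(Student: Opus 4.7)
The plan is to bring $\calV$ to upper-triangular form by a suitable equivalence, then deduce the trace-zero relation defining $\calJ_3(\F_2)$ from the rank constraint and primitivity.

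First, by left-right equivalence I assume $x=e_1$ and $\calV e_1\subseteq \F_2 e_1$ (reducedness yields $\dim \calV e_1=1$, so the image line can be sent to $\F_2 e_1$). Every $M\in \calV$ then has the block form $M=\begin{bmatrix} a(M) & l(M)^T \\ 0 & B(M) \end{bmatrix}$ with $a:\calV\to \F_2$ linear and $a\not\equiv 0$ (else $e_1$ would sit in the common kernel, contradicting reducedness). Fix $M_1\in \calV$ with $a(M_1)=1$ and set $\calV_0:=\Ker a$, $\calB_0:=\{B(M):M\in \calV_0\}$, $\calB:=\{B(M):M\in \calV\}$. Expansion of $\det M$ along the first column gives $\rk M=1+\rk B(M)$ whenever $a(M)=1$, so the affine coset $\calA:=B(M_1)+\calB_0$ consists of matrices of rank at most $1$ in $\Mat_2(\F_2)$.

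The central step is to show that $\calB$ is equivalent (under $\GL_2(\F_2)\times \GL_2(\F_2)$) to a subspace of $T_2^+(\F_2)$. When $\urk \calB\leq 1$, Schur's classification places $\calB$ in an equivalent of $\calR(1,0)$ or $\calR(0,1)$; the first is already in $T_2^+(\F_2)$, and the second is sent there by the swap $P=Q=\begin{bmatrix}0&1\\1&0\end{bmatrix}$. When $\urk \calB=2$, any rank-$2$ element of $\calB$ lies in $\calB_0$ (as $\calA$ is rank-$\leq 1$), so $\urk \calB_0=2$. The identity $\det(B_1+C)=\det B_1+\det C+\tr(B_1\operatorname{adj}(C))$ together with $\det(B_1+C)=0$ for $B_1:=B(M_1)$ and every $C\in \calB_0$ shows, by polarization, that the non-degenerate alternating bilinear form $(C,C')\mapsto \tr(C\operatorname{adj}(C'))$ on $\Mat_2(\F_2)\cong \F_2^4$ vanishes on $\calB_0\times \calB_0$; hence $\calB_0$ is Lagrangian, of dimension at most $2$, and $\det|_{\calB_0}$ is additive, hence linear. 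After similarity I can send the rank-$2$ element of $\calB_0$ to $I$ and the rank-$1$ element (of trace $0$, hence nilpotent) to $N=\begin{bmatrix}0&1\\0&0\end{bmatrix}$, so $\calB_0=\F_2 I\oplus \F_2 N\subseteq T_2^+(\F_2)$. Direct computation of the four equations $\det(B_1+C)=0$ for $C\in \calB_0$ then forces $B_1$ to have $(2,1)$-entry zero, trace $1$ and determinant $0$; so $B_1\in T_2^+(\F_2)$, and $\calB\subseteq T_2^+(\F_2)$.

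Applying this equivalence to the last two rows and columns preserves $\calV e_1\subseteq \F_2 e_1$, so $\calV$ becomes a subspace of $T_3^+(\F_2)$. Let $\alpha(M)=M_{11}$, $\beta(M)=M_{22}$, $\gamma(M)=M_{33}$. From upper-triangularity and $\rk M\leq 2$ one has $\alpha\beta\gamma\equiv 0$ on $\calV$. None of these three forms is zero: $\alpha\equiv 0$ puts $e_1$ in the common kernel (fails reducedness); $\gamma\equiv 0$ confines all images to $\Vect(e_1,e_2)$ (fails condition (ii)); $\beta\equiv 0$ makes the first two columns of every $M$ of rank at most $1$, violating primitivity condition (iii). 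Since $\alpha\beta\gamma\equiv 0$ with $\alpha,\beta,\gamma$ all non-zero, they are linearly dependent in $\calV^*$. The case $\alpha=\beta$ reduces (via $x^2=x$ on $\F_2$) to $\alpha\gamma\equiv 0$, hence $\Ker\alpha=\Ker\gamma$ (a vector space is never the union of two distinct hyperplanes), hence $\alpha=\beta=\gamma$; but then any $M$ with $\alpha(M)=1$ is invertible upper-triangular, contradicting $\urk \calV=2$. The cases $\alpha=\gamma$ and $\beta=\gamma$ are excluded identically. The only remaining relation is $\alpha+\beta+\gamma=0$, i.e.\ $M_{22}=M_{11}+M_{33}$ for every $M\in \calV$, which is precisely the defining condition of $\calJ_3(\F_2)$ among upper-triangular matrices. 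Hence $\calV\subseteq \calJ_3(\F_2)$. The main obstacle is the upper-triangularization of $\calB$ in the case $\urk \calB_0=2$: one must combine the symplectic isotropy of $\calB_0$ under the polarization of $\det$ with the rank-$1$ constraint on the affine coset $\calA$ to force $B(M_1)$ into $T_2^+(\F_2)$ in the normalized basis.
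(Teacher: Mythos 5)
Your strategy is sound and, for its key step, genuinely different from the paper's. The paper proves that $K(\calV)$ (your $\calB$) is equivalent to a subspace of $T_2^+(\F_2)$ by contradiction: if not, $\calB$ contains an invertible matrix, the form $a$ then factors through $\calB$ and vanishes on its invertible elements, and a short argument about which subspaces of $\Mat_2(\F_2)$ are spanned by their invertible elements forces $\dim \calB=2$ with every non-zero element invertible, which is absurd. You instead triangularize $\calB$ constructively, exploiting that $\calB_0=B(\Ker a)$ is totally isotropic for the polar form $(C,C')\mapsto \tr(C\operatorname{adj}(C'))$ of the determinant; that computation is correct (the constant term one picks up is $\det B(M_1)$, which vanishes since $B(M_1)$ lies in the rank-$\leq 1$ coset), and the concluding trace-zero argument via $\alpha\beta\gamma\equiv 0$ is essentially the paper's argument on the space of diagonal vectors in a different guise.

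There is, however, a genuine gap in the case $\urk\calB=2$: after establishing $\dim\calB_0\leq 2$, you assert $\calB_0=\F_2 I\oplus \F_2 N$, which presupposes both that $\dim\calB_0=2$ and that $\calB_0$ contains a rank-one element. Nothing you have proved rules out $\dim\calB_0=1$, and this case genuinely occurs for primitive spaces satisfying the hypothesis: for the space with generic matrix $\begin{bmatrix}\mathbf{a}&0&0\\0&\mathbf{a}+\mathbf{b}&0\\0&0&\mathbf{b}\end{bmatrix}$ (primitive by Proposition \ref{J3structure}, with $\dim\calV e_1=1$) one finds $\calB_0=\F_2 I_2$, which contains no rank-one element, so your normalization step fails as written. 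The repair is short: if $\calB_0=\F_2 G$ with $G$ invertible, normalize $G$ to $I_2$ by equivalence; the two equations $\det B_1=0$ and $\det(B_1+I_2)=0$ then force $B_1$ to be a rank-one idempotent, hence conjugate to $E_{1,1}$ by a similarity fixing $I_2$, and $\calB\subset T_2^+(\F_2)$ follows. (When $\dim\calB_0=2$, the linearity of $\det$ on $\calB_0$ does guarantee a rank-one element, so that branch of your argument is fine.) With this missing case added, the proof is complete.
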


\begin{proof}
Without loss of generality, we may assume that every matrix $M$ of $\calV$
splits up as
$$M=\begin{bmatrix}
a(M) & [?]_{1 \times 2} \\
[0]_{2 \times 1} & K(M)
\end{bmatrix} \quad \text{with $a(M) \in \F_2$ and $K(M) \in \Mat_2(\F_2)$.}$$
As every matrix of $\calV$ is singular, $a(M)=0$ whenever $K(M)$ is non-singular.

Assume that $K(\calV)$ is inequivalent to a subspace of $T_2^+(\F_2)$.
In particular, $K(\calV)$ must contain a non-singular matrix (by the classification of spaces with upper rank $1$).
Then, we have some $M_0 \in \calV$ such that $K(M_0)$ is non-singular and hence $a(M_0)=0$.
For all $M \in \calV$, if $K(M)=0$ then $K(M+M_0)$ is non-singular and hence $a(M+M_0)=0$,
which yields $a(M)=0$. It follows that there is a linear form $\varphi : K(\calV) \rightarrow \F_2$
such that $a(M)=\varphi(K(M))$ for all $M \in \calV$.
As $a \neq 0$ (because $\calV$ is reduced), we see that $K(\calV)$ cannot be spanned by its non-singular matrices.
If $K(\calV)$ were a hyperplane of $\Mat_2(\F_2)$, then it would be equivalent to
$T_2^+(\F_2)$ or to $\Mats_2(\F_2)$, whether its orthogonal subspace for $(A,B) \mapsto \tr(AB)$
contained a rank $1$ matrix or not. However, $\Mats_2(\F_2)$ is spanned by its non-singular elements, and so does
$\Mat_2(\F_2)$, whence $\dim K(\calV) \leq 2$. Moreover, as
$K(\calV)$ is inequivalent to a subspace of $T_2^+(\F_2)$, we have
$K(\calV)y=(\F_2)^2$ for all non-zero vectors $y \in (\F_2)^2$; It ensues that $\dim K(\calV)=2$
and that all the non-zero matrices of $K(\calV)$ are non-singular, contradicting the fact that
$K(\calV)$ is not spanned by its non-singular matrices.

Thus, $K(\calV)$ is actually equivalent to a subspace of $T_2^+(\F_2)$. Therefore,
no generality is lost in assuming that $\calV$ is actually a linear subspace of $T_3^+(\F_2)$.
For $M \in \calV$, denote by $\delta(M)=\begin{bmatrix}
m_{1,1} & m_{2,2} & m_{3,3}
\end{bmatrix}^T \in (\F_2)^3$ its diagonal vector.
Then, $\delta(\calV)$ is a linear subspace of $(\F_2)^3$
that does not contain the vector
$\begin{bmatrix}
1 & 1 & 1
\end{bmatrix}^T$ (since no matrix of $\calV$ is invertible). Thus, $\delta(\calV)$ is included in a hyperplane with the same property.
Moreover, since $\calV$ is primitive, $\delta(\calV)$ is included in none of the three canonical hyperplanes
(with equations $x_1=0$, $x_2=0$ and $x_3=0$, respectively).
The only remaining hyperplane which does not contain $\begin{bmatrix}
1 & 1 & 1
\end{bmatrix}^T$ is the one defined by the equation $x_1+x_2+x_3=0$, whence every matrix of $\calV$ has trace $0$.
We conclude that $\calV$ is a linear subspace of $\calJ_3(\F_2)$.
\end{proof}

\begin{lemma}\label{rank1}
Let $\calV$ be a primitive subspace of $\Mat_3(\F_2)$ with upper rank $2$.
Assume that $\calV$ contains a rank $1$ matrix.
Then, $\calV$ is equivalent to a subspace of $\calJ_3(\F_2)$.
\end{lemma}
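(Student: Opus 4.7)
The approach will be to reduce the problem to Lemma~\ref{transrank1} by exhibiting a non-zero vector on which $\calV$, or its transpose, acts with image of dimension at most $1$. Since any rank $1$ matrix is equivalent to $E_{1,1}$, we may assume up to equivalence that $E_{1,1} \in \calV$, and write each $M \in \calV$ as
$$M=\begin{bmatrix} a & X^T \\ Y & K(M) \end{bmatrix}$$
with $a \in \F_2$, $X,Y\in(\F_2)^2$ and $K(M)\in \Mat_2(\F_2)$.

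The crucial observation is that $K(M)$ is singular for every $M \in \calV$. Indeed, if $K(M)$ were invertible then, because $\calV$ has upper rank $2$, the equalities $\det M=0$ and $\det(M+E_{1,1})=0$ would, via the Schur-complement formula, force both scalars $a+X^T K(M)^{-1}Y$ and $a+1+X^T K(M)^{-1}Y$ to vanish in $\F_2$, which is absurd. Hence $K(\calV)$ is a linear subspace of $\Mat_2(\F_2)$ of upper rank at most $1$, and Schur's classification recalled in the introduction yields that, up to left- and right-multiplication by invertible $2 \times 2$ matrices, $K(\calV)$ lies in $\calR(1,0)$ or in $\calR(0,1)$. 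Conjugating $\calV$ by block-diagonal matrices of the form $\begin{bmatrix} 1 & 0 \\ 0 & P \end{bmatrix}$ and $\begin{bmatrix} 1 & 0 \\ 0 & Q \end{bmatrix}$ preserves the condition $E_{1,1}\in\calV$, so we may further assume that $K(\calV)$ is itself contained in one of these two spaces.

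If $K(\calV)\subset \calR(1,0)$, then the last row of every $M\in\calV$ has the form $(c,0,0)$, whence $\dim \calV^T e_3 \leq 1$; applying Lemma~\ref{transrank1} to the primitive space $\calV^T$ shows that $\calV^T$ is equivalent to a subspace of $\calJ_3(\F_2)$, and transposition combined with Remark~\ref{transposeremark} (which gives $\calJ_3(\F_2)^T \simeq \calJ_3(\F_2)$) yields the conclusion for $\calV$. In the other case $K(\calV)\subset \calR(0,1)$, every matrix of $\calV$ has its third column of the form $(b,0,0)^T$, so $\dim \calV e_3 \leq 1$ and Lemma~\ref{transrank1} applies directly to $\calV$.

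The main obstacle is really the Schur-complement step: once one recognizes that having $E_{1,1}\in\calV$ forces every bottom-right $2 \times 2$ block of a matrix in $\calV$ to be singular, the remainder of the proof is a routine appeal to the Schur classification of upper-rank-$1$ matrix spaces together with Lemma~\ref{transrank1} (possibly applied after transposition).
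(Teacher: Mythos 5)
Your proof is correct and follows essentially the same route as the paper: both establish that the lower-right $2\times 2$ block $K(M)$ is singular for every $M\in\calV$ (the paper via the identity $\det(M+E_{1,1})-\det M=\det K(M)$, you via the Schur complement, which amounts to the same computation), then use the classification of upper-rank-$1$ spaces to produce a vector $x$ with $\dim\calV x\leq 1$ or $\dim\calV^T x\leq 1$ and conclude by Lemma~\ref{transrank1} together with Remark~\ref{transposeremark}. The only cosmetic differences are that you normalize $K(\calV)$ into $\calR(1,0)$ or $\calR(0,1)$ by an explicit equivalence (and say ``conjugating'' where you mean an equivalence with two different block-diagonal factors), whereas the paper argues directly with common kernels and common ranges.
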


\begin{proof}
Without loss of generality, we may assume that $\calV$ contains the elementary matrix
$E_{1,1}$ (with entry $1$ at the $(1,1)$-spot, and zero entries everywhere else). Then, we split every $M \in \calV$ as
$$M=\begin{bmatrix}
a(M) & R(M) \\
S(M) & K(M)
\end{bmatrix} \quad \text{with $a(M) \in \F_2$ and $K(M) \in \Mat_2(\F_2)$.}$$
We contend that every matrix of $K(\calV)$ is singular. Indeed, if we let $M \in \calV$,
then both matrices $M$ and $M+E_{1,1}$ belong to $\calV$, and therefore
$$0=\det(M+E_{1,1})-\det M=\det K(M).$$
It follows that $\urk K(\calV) \leq 1$.
Then, there are two cases to consider:
\begin{itemize}
\item Either there is a non-zero vector of $(\F_2)^2$ on which all the matrices of $K(\calV)$ vanish;
in this case we find a non-zero vector $x$ of $(\F_2)^3$ for which $\dim \calV x \leq 1$, and
Lemma \ref{transrank1} shows that $\calV$ is equivalent to a linear subspace of $\calJ_3(\F_2)$.
\item Or the non-zero matrices of $K(\calV)$ have the same range, whence there is a non-zero
vector $x \in (\F_2)^3$ for which $\dim \calV^T x \leq 1$. As $\calV^T$
is primitive with upper rank $2$, we deduce from Lemma \ref{transrank1} that it is equivalent to a linear subspace of
$\calJ_3(\F_2)$. However, we have seen in Remark \ref{transposeremark} that $\calJ_3(\F_2)^T$ is equivalent to $\calJ_3(\F_2)$,
whence $\calV$ is equivalent to a linear subspace of $\calJ_3(\F_2)$.
\end{itemize}
\end{proof}

\subsection{Basic identities}\label{identitysection}

In the rest of the proof, we let $\calV$ be a primitive subspace of $\Mat_{n,p}(\F_2)$ with upper rank $2$.
Note that $n\geq 3$ and $p \geq 3$.

As $\calV$ contains a rank $2$ matrix and as such a matrix is equivalent to
$$J_2:=\begin{bmatrix}
I_2 & [0]_{2 \times (p-2)} \\
[0]_{(n-2) \times 2} & [0]_{(n-2) \times (p-2)}
\end{bmatrix},$$
we lose no generality in assuming that $\calV$ contains $J_2$.

We split every matrix $M$ of $\calV$ up as
$$M=\begin{bmatrix}
A(M) & C(M) \\
B(M) & D(M)
\end{bmatrix}$$
along the same pattern as $J_2$.

Let $i \in \lcro 3,n\rcro$ and $j \in \lcro 3,p\rcro$.
The $3$ by $3$ sub-matrix of $M$ obtained by selecting row indices in $\{1,2,i\}$ and column indices in $\{1,2,j\}$
is singular since $\rk M \leq 2$, and on the other hand its determinant reads
$$\bigl(\det A(M)\bigr)\,D(M)_{i-2,j-2}-B(M)_{i-2} \,\widetilde{A(M)}\, C(M)_{j-2},$$
where $\widetilde{N}$ denotes the transpose of the comatrix
of the square matrix $N$, and $B(M)_{i-2}$ and $C(M)_{j-2}$ respectively denote the $(i-2)$-th row of $B(M)$ and
the $(j-2)$-th column of $C(M)$. Varying $i$ and $j$ then yields the matrix identity
\begin{equation}\label{fundid}
\bigl(\det A(M)\bigr) D(M)=B(M)\, \widetilde{A(M)}\, C(M),
\end{equation}
Note that $N \mapsto \widetilde{N}$ is linear on $\Mat_2(\F_2)$.
Moreover
\begin{equation}\label{fundid2}
\forall M \in \calV, \; D(M)=0 \Rightarrow B(M)\,C(M)=0.
\end{equation}
To see this, it suffices to apply identity \eqref{fundid} to both matrices $M$ and $M+J_2$.

\subsection{The proof that $n=p=3$}

Now, we prove the following result:

\begin{prop}\label{downto3}
Let $\calV$ be a primitive subspace of $\Mat_{n,p}(\F_2)$ with upper rank $2$. Then, $n=p=3$.
\end{prop}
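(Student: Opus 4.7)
The plan is to establish $p \leq 3$; the bound $n \leq 3$ then follows by applying the same argument to the transpose $\calV^T$, which is also primitive with upper rank $2$ by Remark \ref{transposeremark}. Combined with the lower bounds $n,p \geq 3$ from Proposition \ref{basicsemiprimitiveprop}, this yields $n = p = 3$. So I would assume for contradiction that $p \geq 4$, with $J_2 \in \calV$ as set up in Section \ref{identitysection}. The strategy is to exhibit a nonzero vector $\tilde v \in \F_2^p$ belonging to $\ker M$ for every $M \in \calV$, which contradicts primitivity condition (i).

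First I would establish two key tools. Applying identity \eqref{fundid} to both $M$ and $M + J_2$, and using that the adjugate is linear on $\Mat_2(\F_2)$ together with the formula $\det(A + I_2) = \det A + 1 + \tr A$, one derives the auxiliary identity
$$(1 + \tr A(M))\,D(M) = B(M)\,C(M) \quad \text{for every } M \in \calV.$$
Then, for $M \in \calV_0 := \{M \in \calV : A(M) = 0\}$, a direct computation of the left kernel of $M$ (which factors compatibly with the block decomposition thanks to $D(M) = B(M) C(M)$) shows that $\rk M = \rk B(M) + \rk C(M)$. Since $\rk M \leq 2$, this gives the trichotomy: $B(M) = 0$, $C(M) = 0$, or $\rk B(M) = \rk C(M) = 1$.

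The crux is to produce a nonzero vector $v$ in the common right kernel $K := \bigcap_{M \in \calV} \ker C(M) \subset \F_2^{p-2}$. Since $C(\calV) \subset \Mat_{2, p-2}(\F_2)$ with $p - 2 \geq 2$, a case analysis on $\urk C(\calV)$ combining the Schur classification of rank-$1$ matrix spaces, the $\calV_0$-trichotomy, and primitivity conditions (iii) and (iv) should provide such a $v$. Given $v \in K \setminus \{0\}$, set $\tilde v \in \F_2^p$ to have its first two coordinates zero and its last $p - 2$ coordinates equal to $v$; then $M \tilde v$ has its top two coordinates equal to $C(M) v = 0$, so it suffices to prove $D(M) v = 0$ for every $M$. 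To this end, the map $\alpha : A(\calV) \to \F_2^{n-2}$ sending $A(M) \mapsto D(M) v$ is well-defined modulo $\calV_0$ (since any $N \in \calV_0$ contributes $D(N) v = B(N) C(N) v = 0$) and linear. The auxiliary identity gives $\alpha = 0$ on the hyperplane of trace-zero matrices, and \eqref{fundid} gives $\alpha = 0$ on invertible matrices (where $D = B A^{-1} C$). The remaining case of $A$ with $\rk A = 1$ and $\tr A = 1$ is handled either by the presence of an invertible trace-$1$ element in $A(\calV)$ (forcing $\alpha$ to vanish by linearity of $\alpha$ on $\calV_0$-cosets), or by a Schur-complement reduction of $M$: the constraint $\rk M \leq 2$ together with $C(M) v = 0$ and the $\calV_0$-trichotomy pins down $D(M) v = 0$. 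One then concludes $\alpha \equiv 0$, so $\tilde v \in \bigcap_M \ker M \setminus \{0\}$, contradicting (i).

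The main obstacle is establishing $K \neq 0$. The delicate subcase is when $C(\calV)$ admits a common left-kernel vector $w \in \F_2^2$: this translates into a row dependency that, after a row operation, places one of the top two rows of $\calV$ into the first two columns only, and the argument will need to combine this reduction with primitivity condition (iv) to either extract a right-kernel vector for $C(\calV)$ or to derive an outright contradiction. The Schur-complement analysis handling the rank-$1$ trace-$1$ case of $\alpha$ is also technically involved, as it requires careful tracking of the dependency on the chosen representative in each fiber of $A(M)$.
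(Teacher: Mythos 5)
Your preliminary computations are sound: applying \eqref{fundid} to $M$ and $M+J_2$ does give $(1+\tr A(M))\,D(M)=B(M)\,C(M)$, the rank additivity $\rk M=\rk B(M)+\rk C(M)$ on $\calV_0$ is correct, and the map $\alpha$ is indeed well defined and vanishes on trace-zero and on invertible elements of $A(\calV)$. But the two claims that actually carry the proof are only asserted, and you flag them yourself as ``obstacles.'' First, the residual case of $\alpha$ is non-vacuous (e.g.\ $A(\calV)=\Vect(I_2,E_{1,1})$ has its whole trace-one coset $\{E_{1,1},E_{2,2}\}$ of rank $1$), and the proposed local argument there is false as stated: the matrix $\bigl[\begin{smallmatrix} 1&0&0&0\\0&0&0&0\\0&0&1&0\end{smallmatrix}\bigr]$ has rank $2$, $A(M)=E_{1,1}$, $C(M)=0$, yet $D(M)v\neq 0$ for $v=e_1$. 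So ``$\rk M\leq 2$ together with $C(M)v=0$ and the $\calV_0$-trichotomy pins down $D(M)v=0$'' cannot work matrix by matrix; it would have to exploit the other elements of $\calV$, and no such argument is supplied.

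Second, and more seriously, the claim $K=\bigcap_{M\in\calV}\Ker C(M)\neq\{0\}$ is left entirely to an unspecified case analysis, and it is doubtful it can be completed with your toolkit. The identities \eqref{fundid} and \eqref{fundid2} come from the $3\times 3$ minors on rows $\{1,2,i\}$ and columns $\{1,2,j\}$, which see only one tail column at a time and give no coupling between distinct columns of $C$ or distinct rows of $D$; in particular they place no a priori bound on $\urk C(\calV)$ or $\dim C(\calV)$. The leverage in the paper's proof comes from a \emph{different} family of minors (rows $\{1,i_1,i_2\}$ with $i_1,i_2\geq 3$ and columns $\{1,j_1,j_2\}$ with $j_1,j_2\geq 2$), which yields Step \ref{columnclaim}, hence $\urk D(\calV)\leq 1$ and eventually $\dim D(\calV)\leq 1$; the transposed version is what controls $C$ against $D$. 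Note also that for $p\geq 4$ the failure of primitivity need not occur through condition (i): it can occur through (iii), (iv), or equivalence to a subspace of $\calR(1,1)$, so aiming the whole argument at producing a common kernel vector forces exactly the delicate branching you defer. As it stands the proposal is a plausible strategy outline, not a proof.
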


The proof has several steps. First of all, we lose no generality in assuming that $\calV$ contains $J_2$, as in the preceding section.
Our first step establishes an important relationship between the matrices $B(M)$ and $D(M)$, for $M$ in $\calV$:

\begin{step}\label{columnclaim}
Let $M \in \calV$. Denote by $B_1(M)$ and $B_2(M)$ the columns of $B(M)$.
Then,
$$\rk \begin{bmatrix}
B_1(M) & D(M)
\end{bmatrix}\leq 1 \quad \text{and} \quad \rk \begin{bmatrix}
B_2(M) & D(M)
\end{bmatrix}\leq 1.$$
\end{step}

\begin{proof}
Take two distinct indices $i_1$ and $i_2$ in $\lcro 3,n\rcro$,
two distinct indices $j_1$ and $j_2$ in $\lcro 2,p\rcro$, and denote by
$\Delta(M)$ the $3 \times 3$ sub-matrix of $M$ obtained by selecting the row indices in $\{1,i_1,i_2\}$
and the column indices in $\{1,j_1,j_2\}$. Then, we see that
$\det \Delta(M+J_2)-\det \Delta(M)$ is the determinant of the $2 \times 2$ submatrix
of $M$ obtained by selecting row indices in $\{i_1,i_2\}$ and column indices in $\{j_1,j_2\}$.
As $\det \Delta(M+J_2)=0=\det \Delta(M)$, we deduce that
$\rk \begin{bmatrix}
B_2(M) & D(M)
\end{bmatrix}\leq 1$ by varying $i_1,i_2,j_1,j_2$.
The first inequality is proved in a similar fashion.
\end{proof}

As an immediate corollary, we deduce:

\begin{step}
The upper rank of $D(\calV)$ is less than or equal to $1$.
\end{step}

It follows that either all the non-zero matrices of $D(\calV)$ have the same kernel, or all of them have the same range.

\begin{step}\label{claimcaseDVnonzero}
Assume that $D(\calV) \neq \{0\}$.
If all the non-zero matrices of $D(\calV)$ have the same range (respectively, the same kernel), then $n=3$ (respectively, $p=3$).
\end{step}

\begin{proof}
Note that if all the non-zero matrices of the form $\begin{bmatrix}
B_1(M) & D(M)
\end{bmatrix}$ have the same kernel, then this kernel cannot be $\{0\} \times (\F_2)^{p-2}$ as $D(\calV) \neq \{0\}$;
then, as this kernel must have dimension $p-2$, it must contain a vector of $(\F_2)^{p-1} \setminus \bigl(\{0\} \times (\F_2)^{p-2}\bigr)$,
which yields a column matrix $X \in \K^{p-2}$ such that $B_1(M)=D(M)X$ for all $M \in \calV$.

Assume that all the non-zero matrices of $D(\calV)$ have the same range, denoted by $\calD$.
By Step \ref{columnclaim}, if all the non-zero matrices of the form $\begin{bmatrix}
B_1(M) & D(M)
\end{bmatrix}$ did not have the same range, then they would all have the same kernel -- owing the classification of matrix spaces with upper rank
at most $1$ -- and hence the above remark shows that $B_1(M) \in \im D(M) \subset \calD$ for all
$M \in \calV$. If all those matrices have the same range, it must be $\calD$ because $D(\calV) \neq \{0\}$. In any case, we obtain
$B_1(M) \in \calD$ for all $M \in \calV$.
Similarly, one obtains $B_2(M) \in \calD$ for all $M \in \calV$.
As $\calV$ is reduced, we deduce that $n=3$.

Using $\calV^T$ instead of $\calV$, we deduce that $p=3$ if
all the non-zero matrices of $D(\calV)$ have the same kernel.
\end{proof}

\begin{step}\label{dimatmost1claim}
One has $\dim D(\calV) \leq 1$.
\end{step}

\begin{proof}
Assume that $\dim D(\calV)>1$. Assume also that all the non-zero matrices of $D(\calV)$ have the same
kernel. Then, $p=3$. Moreover, by Step \ref{claimcaseDVnonzero},
all the non-zero matrices of the form $\begin{bmatrix}
B_2(M) & D(M)
\end{bmatrix}$ cannot have the same range, which, by Step 1, yields a scalar $\mu$ such that
$B_2(M)=\mu\, D(M)$ for all $M \in \calV$.
Similarly, one finds $\lambda \in \F_2$ for which $B_1(M)=\lambda\, D(M)$ for all $M \in \calV$.
Performing the column operations $C_1 \leftarrow C_1 -\lambda C_3$ and $C_2 \leftarrow C_2-\mu C_3$
changes none of the above assumptions and reduces the situation to the one where $B(M)=0$ for all $M \in \calV$.
Note that every matrix $M$ of $\calV$ then splits up as
$$M=\begin{bmatrix}
A(M) & [?]_{2 \times 1} \\
[0]_{(n-2) \times 2} & D(M)
\end{bmatrix},$$
whence $\rk A(M)=2 \Rightarrow D(M)=0$.

Let $M \in \calV$ be such that $A(M)=0$.
Then, $A(M+J_2)=I_2$, whence $0=D(M+J_2)=D(M)$. This yields a linear map
$\varphi : A(\calV) \rightarrow (\F_2)^{n-2}$ such that $D(M)=\varphi(A(M))$ for all $M \in \calV$,
and $\varphi$ vanishes at every rank $2$ matrix of $A(\calV)$.
Note that $\dim \Ker \varphi \geq 1$ and $\rk \varphi \geq 2$, whence $\dim A(\calV) \geq 3$.
If $\dim A(\calV)=4$, then $A(\calV)=\Mat_2(\F_2)$ is spanned by its rank $2$ elements, which leads to
$\varphi=0$. Thus, $\dim A(\calV)=3$, $\rk \varphi=2$ and $\dim \Ker \varphi=1$.
But again, we find a contradiction by noting that every linear hyperplane of $\Mat_2(\F_2)$
contains several rank $2$ matrices (this is obvious as such a hyperplane must be equivalent to $\Mats_2(\F_2)$ or to $T_2^+(\F_2)$,
as we have already explained in the course of the proof of Lemma \ref{transrank1}).
Therefore, the non-zero matrices of $D(\calV)$ cannot share the same kernel.

Similarly, by working with $\calV^T$, we see that the non-zero matrices of $D(\calV)$ cannot share the same range.
Therefore, we have contradicted the fact that $D(\calV)$ has upper rank $1$.
\end{proof}

From there, we can complete our proof of Proposition \ref{downto3}:

\begin{step}
One has $n=p=3$.
\end{step}

\begin{proof}
If $D(\calV)\neq \{0\}$, then $\dim D(\calV)=1$, whence all the non-zero matrices of $D(\calV)$ have the same range and the same kernel
(there is only one such matrix!), and Step \ref{claimcaseDVnonzero} yields $n=p=3$.

In the rest of the proof, we assume that $D(\calV)=\{0\}$.
As $\calV$ is reduced, we have $B(\calV) \neq \{0\}$ and $C(\calV) \neq \{0\}$.
For all $M \in \calV$, we know from identity \eqref{fundid2} that $B(M)C(M)=0$.
In particular $B(M)=0$ whenever $\rk C(M)=2$.
Assume that some matrix $M_0$ is such that $\rk C(M_0)=2$.
Then, $B(M_0)=0$. For every $M \in \calV$, we find $B(M) C(M+M_0)=B(M+M_0)C(M+M_0)=0$, whence
$B(M)C(M_0)=B(M)C(M+M_0)-B(M)C(M)=0$, which leads to $B(M)=0$, contradicting our assumptions.
Thus, $\urk C(\calV)=1$, and similarly $\urk B(\calV)=1$.

If all the matrices of $C(\calV)$ have the same kernel, we obtain that $p=3$ since $\calV$ is reduced.
Assume now that all the non-zero matrices of $C(\calV)$ have the same range. Without loss of generality, we
can assume that this range is $\F_2 \times \{0\}$.
Note that $\dim C(\calV)=p-2$ because of condition (i) in the definition of a primitive space.
Then, for all $M \in \calV$, we write $C(M)=\begin{bmatrix}
L(M) \\
[0]_{1 \times (p-2)}
\end{bmatrix}$.
If we let $M \in \calV$, then identity \eqref{fundid2} yields $B_1(M)L(M)=0$, whence either $B_1(M)=0$ or $L(M)=0$.
As $\calV$ is not the union of two of its proper linear subspaces and as $L(\calV) \neq 0$, we deduce that $B_1(\calV)=\{0\}$.
As $B(\calV)\neq \{0\}$ and $\calV$ is reduced, we deduce that $B_2(\calV)=(\F_2)^{n-2}$.
Now, denote by $\alpha(M)$ the entry of $M \in \calV$ at the $(2,1)$-spot.
If $\alpha=0$, then we contradict condition (iii) in the definition of a primitive space (by deleting the second column). Thus, $\alpha \neq 0$,
$B_2 \neq 0$ and $L \neq 0$.
Fix $M \in \calV$, and note that
$$M=\begin{bmatrix}
? & ? & L(M) \\
\alpha(M) & ? & [0]_{1 \times (p-2)} \\
[0]_{(n-2) \times 1} & B_2(M) & [0]_{(n-2) \times (p-2)}
\end{bmatrix}.$$
As $\rk M \leq 2$, one of the matrices $B_2(M)$, $L(M)$ or $\alpha(M)$ must be zero.
However, the linear maps $B_2$, $L$ and $\alpha$ on $\calV$ are all non-zero, and we have just shown that
$\calV$ is the union of their respective kernels. If $p>3$, then $\Ker L$ has codimension at least $2$ in $\calV$, whence Lemma 2.5 of \cite{dSPfeweigenvalues}
yields a contradiction. Therefore, $p=3$.

By applying the above line of reasoning to $\calV^T$, we obtain $n=3$.
\end{proof}

This completes the proof of Proposition \ref{downto3}.

\subsection{Completing the classification}

Let $\calV$ be a primitive subspace of $\Mat_{n,p}(\F_2)$ with upper rank $2$. By Proposition \ref{downto3}, we know that
$\calV$ is actually a linear subspace of $\Mat_3(\F_2)$. Moreover, we can assume that $\calV$ contains $J_2$, and we keep the notation
from Section \ref{identitysection}.
We also make the following additional assumption:
\begin{itemize}
\item[(H1)] $\calV$ is inequivalent to a linear subspace of $\calJ_3(\F_2)$.
\end{itemize}
From there, our aim is to prove that $\calV$ is equivalent to $\Mata_3(\F_2)$, $\calU_3(\F_2)$ or $\calV_3(\F_2)$.

Using (H1), we see from Lemmas \ref{transrank1} and \ref{rank1} that every non-zero matrix of $\calV$ has rank $2$, and $\dim \calV x \geq 2$
 for all $x \in (\F_2)^3 \setminus \{0\}$. By Remark \ref{transposeremark}, $\calV^T$ is also inequivalent to a subspace of $\calJ_3(\F_2)$,
and hence $\dim \calV^T x \geq 2$ for all $x \in (\F_2)^3 \setminus \{0\}$.

\begin{claim}\label{alternatingclaim}
\begin{enumerate}[(a)]
\item If $D(\calV)=\{0\}$, then $\dim \calV=3$.
\item If, for every non-zero matrix $M \in \calV$, we have
$\calV \,\Ker M =\im M$, then $\calV$ is equivalent to $\Mata_3(\F_2)$.
\end{enumerate}
\end{claim}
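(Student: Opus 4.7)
The plan is to dispatch the two parts separately. Part (a) is a linear algebra computation about the dimension of $\calV$, using the quadratic constraints on $B$ and $C$, while part (b) will require constructing a symmetric bilinear form on $\F_2^3$ that exhibits $\calV$ as a conjugate of $\Mata_3(\F_2)$.

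For part (a), assume $D(\calV) = \{0\}$. Identity \eqref{fundid2} then gives $B(M) C(M) = 0$ for every $M \in \calV$. The hypothesis (H1) recalled at the beginning of the subsection ensures that $\dim \calV x \geq 2$ for every non-zero $x \in \F_2^3$, so applying this to $x = e_3$ shows $C : \calV \to \Mat_{2,1}(\F_2)$ is surjective; by transposition, $B$ is surjective as well. I would then study the linear map $\psi := (B, C) : \calV \to \Mat_{1,2}(\F_2) \oplus \Mat_{2,1}(\F_2)$ and its image $W$. Since the two component projections from $W$ are surjective, $\dim W \geq 2$. If $\dim W \geq 3$, then $W$ contains a non-zero vector $(0, C_0)$ with $C_0\neq 0$ (or dually $(B_0, 0)$), and combining with $BC = 0$ on $W$ forces every $B \in \pi_1(W)$ to satisfy $BC_0 = 0$, so $\pi_1(W)$ lands inside a $1$-dimensional subspace, a contradiction. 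Hence $\dim W = 2$, and writing $W$ as the graph of a linear isomorphism $\phi$ the quadratic vanishing of $B\phi(B)$ on $\F_2^2$ pins $\phi$ down as the swap $(b_1, b_2) \mapsto (b_2, b_1)^T$. The kernel $\Ker \psi$ consists of matrices $\begin{bmatrix} A & 0 \\ 0 & 0 \end{bmatrix}$ whose $A$-block has rank $0$ or $2$, and it contains $J_2$. To show $\dim \Ker \psi = 1$, I would use the expansion $\det M = (\alpha + \delta)b_1 b_2 + \beta b_1 + \gamma b_2$ of the determinant (valid when $m_{33} = 0$ and $(c_1, c_2) = (b_2, b_1)^T$), applied to $M_i + K$ for lifts $M_1, M_2$ of the two generators of $W$ and any $K \in \Ker \psi$: the constraints $\det(M_1 + K) = 0$ and $\det(M_2 + K) = 0$ pin down the off-diagonal entries of $A(K)$ to be zero. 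Combined with the rank condition, this leaves $A(K) \in \{0, I_2\}$, whence $\Ker \psi = \Vect(J_2)$ and $\dim \calV = 3$.

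For part (b), applying the hypothesis $\calV \Ker M = \im M$ to $M = J_2$ immediately yields $\calV e_3 = \Vect(e_1, e_2)$, i.e.\ $D(\calV) = \{0\}$ and $C$ surjective, so part (a) applies and $\dim \calV = 3$. Next, for every non-zero $M$, the hypothesis forces the evaluation map $\operatorname{ev}_{v_M} : \calV \to \F_2^3$ to have $2$-dimensional image, hence $1$-dimensional kernel, which must be $\Vect(M)$; this shows that $M \mapsto v_M$ is injective, and by cardinality a bijection $\calV \setminus \{0\} \to \F_2^3 \setminus \{0\}$. The central construction is then a binary bilinear form $b : \F_2^3 \times \F_2^3 \to \F_2$, defined, for each non-zero $v$, by letting $b(v, \cdot)$ be the unique non-zero linear form whose kernel is the hyperplane $\calV v$ (with $b(0, \cdot) = 0$). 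By construction $b$ is linear in the second argument.

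The main obstacle is to establish that $b$ is also linear in the first argument and symmetric. I expect this to follow by verifying the identity $b(v_1, M v_2) = b(v_2, M v_1)$ for all $v_1, v_2 \in \F_2^3$ and $M \in \calV$, which packages the hypothesis into the symmetry of the bilinear pairing $(v_1, v_2) \mapsto b(v_1, M v_2)$ for every $M \in \calV$. Once $b$ is shown to be a non-degenerate symmetric bilinear form with Gram matrix $B$, the containment $Mv \in \Ker b(v, \cdot)$ becomes $v^T B M v = 0$ for every $v \in \F_2^3$ and $M \in \calV$; that is, $BM$ is alternating for every $M \in \calV$. Therefore $B \calV \subset \Mata_3(\F_2)$, and since both sides are $3$-dimensional, $B \calV = \Mata_3(\F_2)$, yielding the equivalence $\calV \sim \Mata_3(\F_2)$ via the pair $(P, Q) = (B, I_3)$.
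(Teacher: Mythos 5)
Part (a) of your proposal is correct and is essentially the paper's argument in different clothing: your graph map $\phi$ is the paper's matrix $K$ with $B(M)=C(M)^TK$, and your determinant expansion replaces the paper's use of identity \eqref{fundid} and the adjugate to show that $A(K)\in\{0,I_2\}$ for $K\in\Ker\psi$. (Do make explicit that the step ``$\dim W\geq 3$ forces $BC_0=0$ for all $B\in\pi_1(W)$'' rests on polarizing the quadratic identity $BC=0$ over $\F_2$; that is routine but needed.)

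Part (b), however, has a genuine gap at exactly the place you flag as ``the main obstacle''. The construction gives you, for each non-zero $v$, a linear form $b(v,\cdot)$ with kernel $\calV v$, but additivity in the first variable is the whole content of the claim and you do not prove it. The identity $b(v_1,Mv_2)=b(v_2,Mv_1)$ that you propose to ``verify'' is not a formal repackaging of the hypothesis: what the hypothesis directly yields is only $b(v,Mv)=0$ for all non-zero $v$ (since $Mv\in\calV v$) and $b(v_M,Mu)=0$ for all $u$ (since $\calV v_M=\im M$), and the polarization trick that would turn the first of these into your symmetric identity is unavailable precisely because $b(\cdot,x)$ is not yet known to be additive --- which is what you are trying to prove. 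Even granting the identity, you would still need to check that every non-zero vector of $(\F_2)^3$ lies in some $\im M$ (e.g.\ by showing the seven ranges are pairwise distinct, hence exhaust all hyperplanes) before concluding that $v\mapsto b(v,x)$ is additive for every $x$. The paper sidesteps all of this by brute normalization: after part (a) one knows $B(M)=C(M)^TK$ with $K$ the swap matrix, one picks preimages $M_1,M_2\in\calV$ of the two standard vectors under $C$, normalizes them by row and column operations, and uses the singularity of $M_1+M_2$ together with the condition $M_2e_1\in\im M_1$ to pin down every remaining entry, arriving at a generic matrix that becomes $\Mata_3(\F_2)$ after swapping two rows. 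Unless you can supply a proof of your symmetry identity (or establish the additivity of $v\mapsto b(v,\cdot)$ by some other means), your part (b) is incomplete; the explicit computation is the safer route.
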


\begin{proof}
\begin{enumerate}[(a)]
\item Assume that $D(\calV)=\{0\}$.
Denoting by $e_3$ the third vector of the standard basis of $(\F_2)^3$, we deduce that $\calV e_3 \subset (\F_2)^2 \times \{0\}$, whence $\calV e_3=(\F_2)^2 \times \{0\}$ as $\dim \calV e_3 \geq 2$.
Thus, $C(\calV)=(\F_2)^2$.
By \eqref{fundid2}, we have
$$\forall M \in \calV, \; B(M)C(M)=0.$$
Polarizing this quadratic identity yields $B(M)C(N)+B(N)C(M)=0$ for all $(M,N)\in \calV^2$.
It follows that for every $M \in \calV$ such that $C(M)=0$, we have $B(M)C(N)=0$ for all $N \in \calV$, which yields
$B(M)=0$ since $C(\calV)=(\F_2)^2$.
This yields a (non-zero) matrix $K \in \Mat_2(\F_2)$ such that $B(M)=C(M)^T K$ for all $M \in \calV$. Then,
$C(M)^T K C(M)=0$ for all $M \in \calV$, which shows that $K$ is alternating.
Therefore, $K=\begin{bmatrix}
0 & 1 \\
1 & 0
\end{bmatrix}$ (the sole non-zero matrix in $\Mata_2(\F_2)$). \\
Let $M_0 \in \calV$ be such that $C(M_0)=0$. Then, $B(M_0)=0$ and, for all $M \in \calV$, we find, by identity \eqref{fundid},
\begin{multline*}
C(M)^T K\widetilde{A(M_0)}C(M)=B(M) \widetilde{A(M_0)}C(M) \\
=B(M+M_0)\widetilde{\bigl(A(M)+A(M_0)\bigr)} C(M+M_0)-B(M) \widetilde{A(M)}C(M)=0,
\end{multline*}
whence $K\widetilde{A(M_0)}$ is alternating. Thus, $K\widetilde{A(M_0)}\in \F_2\,K$, and hence $A(M_0) \in \{0,I_2\}$.
Noting that $C(J_2)=0$, we deduce that $\Ker C=\F_2 J_2$, whence $\dim \calV=3$.

\item Assume that for every non-zero matrix $M \in \calV$, we have
$\calV \Ker M =\im M$. In particular, the case $M=J_2$ yields $D(\calV)=\{0\}$, whence the above proof shows that
$C(\calV)=(\F_2)^2$ and $B(M)=C(M)^T K$ for all $M \in \calV$.
We choose $M_1 \in \calV$ with $C(M_1)=\begin{bmatrix}
1 \\
0
\end{bmatrix}$, so that
$$M_1=\begin{bmatrix}
? & ? & 1 \\
? & ? & 0 \\
0 & 1 & 0
\end{bmatrix}.$$
Replacing $M_1$ with $M_1+J_2$ if necessary, we can assume that the entry of $M_1$
at the $(1,1)$-spot is $0$. As $M_1$ is singular, its entry at the $(2,1)$-spot is $0$.
Using row operations of the form $L_1 \leftarrow L_1-\lambda L_3$ and $L_2 \leftarrow L_2-\mu L_3$,
we see that no generality is lost in assuming that
$$M_1=\begin{bmatrix}
0 & 0 & 1 \\
0 & 0 & 0 \\
0 & 1 & 0
\end{bmatrix}.$$
With a similar line of reasoning, we find scalars $a$ and $b$ such that $\calV$ contains a matrix of the form
$$M_2=\begin{bmatrix}
0 & 0 & 0 \\
a & b & 1 \\
1 & 0 & 0
\end{bmatrix}.$$
As $\rk(M_1+M_2) \leq 2$, one finds $a=b$ by computing the determinant.
As $e_1 \in \Ker M_1$, we must have $M_2 e_1 \in \im M_1$, whence $a=0$. We conclude that $a=b=0$,
and hence, as $\dim \calV=3$, we have $\calV=\Vect(J_2,M_1,M_2)$, i.e.\ $\calV$ is associated with the generic matrix
$$\begin{bmatrix}
\mathbf{a} & 0 & \mathbf{b} \\
0 & \mathbf{a} & \mathbf{c} \\
\mathbf{c} & \mathbf{b} & 0
\end{bmatrix}.$$
Swapping the first two rows finally shows that $\calV$ is equivalent to $\Mata_3(\F_2)$.
\end{enumerate}
\end{proof}

\begin{claim}\label{transitivityclaim}
One has $3 \leq \dim \calV \leq 4$ and there is at least one non-zero vector $x \in (\F_2)^3$ for which $\dim \calV x=2$.
Moreover, if $\dim \calV=3$, then $\dim \calV x=\dim \calV^T x=2$ for all non-zero vectors $x \in (\F_2)^3$.
\end{claim}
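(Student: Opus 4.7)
My plan is a double-count of the pairs $(M,x) \in \calV \times \bigl((\F_2)^3 \setminus \{0\}\bigr)$ such that $Mx = 0$, leveraging the two facts just recalled before the claim: every non-zero matrix of $\calV$ has rank $2$, and $\dim \calV x \geq 2$ for every non-zero $x$. Writing $d := \dim \calV$ and $d_x := \dim \calV x \in \{2,3\}$, I count these pairs in two ways. Summing over $x$ first gives $\sum_{x \neq 0} 2^{d-d_x}$, since the matrices vanishing at $x$ form the kernel of the evaluation map $\calV \to (\F_2)^3$, which has dimension $d - d_x$. Summing over $M$ first contributes $7$ from the zero matrix and exactly $1$ from each of the $2^d - 1$ non-zero (rank $2$) matrices, totalling $2^d + 6$.

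Equating the two counts and introducing $k := \bigl|\{x \neq 0 : d_x = 2\}\bigr|$, the left-hand side equals $k \cdot 2^{d-2} + (7-k)\cdot 2^{d-3}$, so the identity simplifies to
\[
k \;=\; 1 + 48 \cdot 2^{-d}.
\]
Requiring $k$ to be a non-negative integer in $\lcro 0,7\rcro$ leaves only two possibilities: $d = 3$ with $k = 7$, or $d = 4$ with $k = 4$. This simultaneously yields $3 \leq \dim \calV \leq 4$, the existence of at least one non-zero $x$ with $\dim \calV x = 2$ (since $k \geq 1$ in both cases), and, when $\dim \calV = 3$, the equality $\dim \calV x = 2$ for every non-zero $x$.

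It remains to handle $\dim \calV^T x$ when $d = 3$. By Remark \ref{transposeremark}, $\calV^T$ is also a primitive subspace of $\Mat_3(\F_2)$ with upper rank $2$ inequivalent to a subspace of $\calJ_3(\F_2)$, so the two preliminary facts apply to $\calV^T$ and $\dim \calV^T = 3$. Rerunning the same double-count on $\calV^T$ therefore forces $\dim \calV^T x = 2$ for every non-zero $x$. The only subtlety in the whole argument is making sure no rank $1$ matrix appears in the right-hand side of the double-count; this is precisely what Lemma \ref{rank1}, together with assumption (H1), provides.
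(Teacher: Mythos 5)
Your proposal is correct and follows essentially the same double-counting argument as the paper (the paper counts only pairs with $M\neq 0$, getting $n_2=1+3\cdot 2^{4-d}$, while you include the zero matrix, getting $k=1+48\cdot 2^{-d}$; these are the same identity). The handling of the transpose via the dimension-$3$ case is also exactly the paper's step.
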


\begin{proof}
Set $d:=\dim \calV$.
We use a counting argument: Denote by $\calN$ the set of all pairs  $(M,x)\in \bigl(\calV \setminus \{0\}\bigr) \times \bigl((\F_2)^3 \setminus \{0\}\bigr)$
for which $Mx=0$. Remember that $\dim \calV x \in \{2,3\}$ for all non-zero vectors $x \in (\F_2)^3$.
For $i \in \{2,3\}$, denote by
$n_i$ the number of non-zero vectors $x \in (\F_2)^3$ for which $\dim \calV x=i$.
For every non-zero vector $x \in (\F_2)^3$, the set of all matrices $M \in \calV$ for which $Mx=0$ is the kernel of
$M \mapsto Mx$, and hence it has dimension $d-\dim \calV x$. Thus,
$$\# \calN=(2^{d-2}-1)\,n_2+(2^{d-3}-1)\,n_3.$$
On the other hand, every non-zero matrix of $\calV$ has rank $2$ and hence it annihilates exactly one non-zero vector of $(\F_2)^3$.
Therefore,
$$\# \calN= 2^d-1.$$
As $n_3=7-n_2$, we deduce that $2^{d-3} n_2=2^d- 7\times 2^{d-3}+6$, which leads to
$$n_2=1+3\times 2^{4-d}.$$
In particular, we deduce that $n_2>0$.
As $n_2$ must be an integer, we find $4-d \geq 0$.
As $n_2 \leq 7$, we also find $d \geq 3$. Thus, $d \in \{3,4\}$.
Finally, if $d=3$, then $n_2=7$ whence $\dim \calV x=2$ for every non-zero vector $x \in (\F_2)^3$;
$\calV^T$ must satisfy the same conclusion as it has dimension $3$.
\end{proof}

Now, we make an additional assumption:

\begin{itemize}
\item[(H2)] $\calV$ is inequivalent to $\Mata_3(\F_2)$.
\end{itemize}

We shall conclude by distinguishing between two cases, whether $\calV$ has dimension $3$ or $4$.

\begin{claim}
Assume that $\dim \calV=3$. Then, $\calV$ is equivalent to $\calU_3(\F_2)$.
\end{claim}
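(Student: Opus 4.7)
The plan is to normalize a basis of $\calV$ to a canonical form forced by the hypotheses, and then to identify this form with $\calU_3(\F_2)$ by uniqueness. By hypothesis (H2) combined with Claim~\ref{alternatingclaim}(b), there is a non-zero $M_0 \in \calV$ with $\calV\,\Ker M_0 \neq \im M_0$; since $M_0$ has rank $2$, I may replace $\calV$ by an equivalent space so that $M_0 = J_2$, and the condition becomes $\calV e_3 \neq \Vect(e_1,e_2)$, or equivalently that the linear form $D : \calV \to \F_2$ extracting the $(3,3)$-entry is nonzero.

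Pick $M_1 \in \calV$ with $D(M_1)=1$, written $M_1 = \begin{bmatrix} A_1 & C_1 \\ B_1 & 1 \end{bmatrix}$. Identity~\eqref{fundid} applied to $M_1$ and to $M_1+J_2$, together with the $\F_2$ formula $\det(A+I_2)=\det A+\tr A+1$, gives $B_1 C_1 = \tr A_1 + 1$. Using the stabilizer of $J_2$ in the equivalence group, I reduce $B_1$ and $C_1$ to zero; the new $A_1$ then has trace $1$ and determinant $0$, hence is a rank-$1$ projection, and since all such are conjugate in $\GL_2(\F_2)$ to $E_{1,1}$, I may finally assume $M_1 = E_{1,1}+E_{3,3}$. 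Now pick $M_2 \in \Ker D$ linearly independent from $J_2$, written $M_2=\begin{bmatrix} A_2 & C_2 \\ B_2 & 0 \end{bmatrix}$, with off-diagonal entries $(b,c,d,f,g,h)=(m_{1,2},m_{1,3},m_{2,1},m_{2,3},m_{3,1},m_{3,2})$. Applying identity~\eqref{fundid} to $M_1+M_2$ and to $J_2+M_1+M_2$, combined with \eqref{fundid2} applied to $M_2$, forces the two diagonal entries of $A_2$ to be equal; replacing $M_2$ by $M_2+\lambda J_2$ for the appropriate $\lambda\in\F_2$ then kills both. The same identities yield $bd = gc = hf$, and the singularity condition $\det M_2 = 0$ gives $bfg=cdh$.

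The remaining step is a combinatorial check, which is the main technical obstacle. By Claim~\ref{transitivityclaim}, $\dim \calV e_i = 2$ for each $i$, which translates into the requirement that each of the pairs $\{b,h\}$, $\{c,f\}$, $\{d,g\}$ contains at least one $1$. If $bd=0$, then also $gc=hf=0$, and the three pairs $\{b,d\},\{c,g\},\{f,h\}$ (each with at most one $1$) together with the three previous pairs form a single $6$-cycle on the six variables; an inspection shows that the only sets compatible with both constraints are $\{b,g,f\}$ and $\{c,d,h\}$, and both violate $bfg=cdh$. Hence $bd=1$, forcing $b=c=d=f=g=h=1$, so $M_2$ has all six off-diagonal entries equal to $1$ and $\calV$ is uniquely determined up to the normalization performed. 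Finally, $\calU_3(\F_2)$ is itself $3$-dimensional, primitive of upper rank~$2$, and satisfies (H1) (since by Lemma~\ref{U3trans}, $\dim \calU_3(\F_2) x = 2$ for every nonzero $x$, whereas any subspace $\calS$ of $\calJ_3(\F_2)$ has $\dim \calS e_1 \leq 1$) and (H2) (by the incompatibility argument, which exhibits some $M \in \calU_3(\F_2)$ with $\calU_3(\F_2)\,\Ker M \neq \im M$, contradicting the property of $\Mata_3(\F_2)$ given by Claim~\ref{alternatingclaim}(b)). The same normalization applied to $\calU_3(\F_2)$ therefore produces the same canonical form, and so $\calV$ is equivalent to $\calU_3(\F_2)$.
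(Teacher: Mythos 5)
Your proof is correct, and while it follows the same overall strategy as the paper's --- fix $J_2 \in \calV$ and normalize a basis $(J_2,M_1,M_2)$ using the stabilizer of $J_2$ together with the identities \eqref{fundid} and \eqref{fundid2} and the condition $\dim \calV x=2$ from Claim \ref{transitivityclaim} --- the execution is genuinely different in two respects. First, you target a more symmetric normal form: your $M_1$ is the diagonal idempotent $E_{1,1}+E_{3,3}$ (obtained from the cleanly derived relation $\tr A_1+1=B_1C_1$, which comes from polarizing \eqref{fundid} against $J_2$) and your $M_2$ is the all-ones off-diagonal matrix, pinned down by the relations $bd=gc=hf$ and $bfg=cdh$ together with a six-cycle covering argument; the paper instead eliminates entries one at a time via ad hoc $3\times 3$ singularity conditions and range comparisons, arriving at a different generic matrix. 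Second, your endgame is a uniqueness-of-normal-form argument: rather than exhibiting explicit row and column operations carrying the normal form to $\calU_3(\F_2)$ (as the paper does with two swaps), you observe that $\calU_3(\F_2)$ itself satisfies all the standing hypotheses including (H1) and (H2) --- which you verify correctly via Lemma \ref{U3trans} and the kernel/range incompatibility argument --- so it must normalize to the same space. I checked the computations (the formula $\det(A+I_2)=\det A+\tr A+1$ over $\F_2$, the derivation $bd=hf$ from \eqref{fundid} applied to $M_2$, $M_1+M_2$ and $J_2+M_1+M_2$, and the enumeration showing the only solutions with $bd=0$ are $\{b,f,g\}$ and $\{c,d,h\}$, both of which violate $bfg=cdh$) and they all hold; the argument is complete. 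The trade-off is that your route avoids hunting for an explicit equivalence at the end but requires the extra verification that $\calU_3(\F_2)$ satisfies (H1) and (H2), which the paper gets for free from its incompatibility section.
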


\begin{proof}
If there are two distinct matrices of $\calV$ with the same (two-dimensional) range,
then, by choosing a non-zero vector $x$ in the orthogonal complement of this range, we would find
$\dim \calV^T x \leq 1$, contradicting Claim \ref{transitivityclaim}.
Thus, two distinct matrices of $\calV$ cannot have the same range.

Combining point (b) of Claim \ref{alternatingclaim} with assumption (H2), we find a matrix $M \in \calV$
such that $\calV \Ker M \neq \im M$.
We lose no generality in assuming that $M=J_2$. As $\dim \calV e_3=2$, no further generality is lost in assuming that the space of all third columns of the matrices of $\calV$ is $\F_2 \times \{0\} \times \F_2$.
This yields two matrices in $\calV$ of the following forms
$$M_1=\begin{bmatrix}
? & ? & 1 \\
? & 0 & 0 \\
? & ? & 0
\end{bmatrix} \quad \text{and} \quad
M_2=\begin{bmatrix}
0 & ? & 0 \\
? & ? & 0 \\
? & ? & 1
\end{bmatrix},$$
as we may add $J_2$ if necessary.
Note that $J_2,M_1,M_2$ are obviously linearly independent whence $\calV=\Vect(J_2,M_1,M_2)$.
By identity \eqref{fundid2}, the entry of $M_1$ at the $(3,1)$-spot must be zero.
As $M_1 \neq J_2$ and $\rk M_1=\rk J_2=2$, we must have $\im M_1 \not\subset \im J_2$, and hence
the entry of $M_1$ at the $(3,2)$-spot is non-zero.
It follows that
$$M_1=\begin{bmatrix}
? & ? & 1 \\
? & 0 & 0 \\
0 & 1 & 0
\end{bmatrix}.$$
As $\rk M_1=2$, we deduce that
$$M_1=\begin{bmatrix}
? & ? & 1 \\
0 & 0 & 0 \\
0 & 1 & 0
\end{bmatrix}.$$
Performing column operations of the forms $C_1 \leftarrow C_1-\lambda C_3$ and $C_2 \leftarrow C_2-\mu C_3$,
we see that no generality is lost in assuming that
$$M_1=\begin{bmatrix}
0 & 0 & 1 \\
0 & 0 & 0 \\
0 & 1 & 0
\end{bmatrix}.$$
As $\dim \calV e_2=2$ and $\calV$ contains $J_2$ and $M_1$, the entry of $M_2$ at the $(1,2)$-spot is zero, whence
$$M_2=\begin{bmatrix}
0 & 0 & 0 \\
? & ? & 0 \\
? & ? & 1
\end{bmatrix}.$$
As $J_2+M_2$ is singular, we deduce that
$$M_2=\begin{bmatrix}
0 & 0 & 0 \\
a & 1 & 0 \\
b & c & 1
\end{bmatrix} \quad \text{for some $(a,b,c)\in (\F_2)^3$.}$$
As $M_1+M_2$ is singular, we find $a(c+1)=b$ by computing the determinant.
Using the singularity of $M_1+M_2+J_2$, we obtain $a(c+1)=0$. Thus, $b=0$.
However, as $\dim(\calV e_1)=2$ and $\calV=\Vect(J_2,M_1,M_2)$, we cannot have $a=0$, whence $a=1$ and $c=1$.
Thus,
$$M_2=\begin{bmatrix}
0 & 0 & 0 \\
1 & 1 & 0 \\
0 & 1 & 1
\end{bmatrix}.$$
We deduce that $\calV$ is associated with the generic matrix
$$\begin{bmatrix}
\mathbf{a} & 0 & \mathbf{b} \\
\mathbf{c} &  \mathbf{a}+\mathbf{c} & 0 \\
0 & \mathbf{b}+\mathbf{c} & \mathbf{c}
\end{bmatrix}.$$
Using the operations $L_1 \leftrightarrow L_3$ and $C_2 \leftrightarrow C_3$, we obtain that $\calV$ is equivalent to $\calU_3(\F_2)$.
\end{proof}

\begin{claim}
Assume that $\dim \calV=4$. Then, $\calV$ is equivalent to $\calV_3(\F_2)$.
\end{claim}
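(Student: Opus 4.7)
The plan is to parallel the dimension $3$ case from the previous claim, but this time with $D(\calV)\neq \{0\}$. Indeed, combining Claim \ref{alternatingclaim}(a) (which forces $\dim \calV = 3$ when $D(\calV)=\{0\}$) with Step \ref{dimatmost1claim}, we get $\dim D(\calV)=1$. Fix $M_3 \in \calV$ with $D(M_3)=1$ and set $W := \calV \cap \Ker D$, a $3$-dimensional subspace containing $J_2$, so that $\calV = W \oplus \F_2 M_3$.

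The structural key is the polarization of \eqref{fundid2}: for $M,N \in W$, $B(M)C(N)+B(N)C(M)=0$. Hence the linear map $\phi : W \to \F_2^{2}\oplus \F_2^{2}$, $M \mapsto (B(M),C(M)^T)$, has image that is totally singular for the quadratic form $q(b,c)=b\cdot c$ on $\F_2^4$ (whose polar is the standard symplectic form). Maximal such subspaces being $2$-dimensional, we get $\dim \phi(W)\in \{0,1,2\}$. The case $\dim \phi(W)=0$ forces $B\equiv 0\equiv C$ on $W$, so $\calV e_3=\F_2\,(M_3 e_3)$ has dimension at most $1$, contradicting Claim \ref{transitivityclaim}. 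The case $\dim \phi(W)=1$ splits according to the generator $(b_0,c_0)$ of $\phi(W)$: the subcases $b_0=0$ and $c_0=0$ directly violate reducedness via the third row or column; in the remaining subcase ($b_0,c_0$ both non-zero with $b_0\cdot c_0=0$), $W$ contains a $2$-dimensional subspace $W' \ni J_2$ of matrices of the form $\begin{bmatrix} A & 0 \\ 0 & 0 \end{bmatrix}$ where $A$ ranges over a $2$-dimensional subspace of $\Mat_2(\F_2)$ whose non-zero elements are all invertible---necessarily the unique subfield of $\Mat_2(\F_2)$ isomorphic to $\F_4$. Applying \eqref{fundid} to $M_3$, $M_3+J_2$ and $M_3+N$ for a non-identity $N\in W'$, together with the characteristic-$2$ identity $\det(X+I)=\det X + 1 + \tr X$ and the linearity of the $2\times 2$ adjugate, yields $\tr A(N)=0$; but the two order-$3$ elements of this $\F_4\subset \Mat_2(\F_2)$ both have trace $1$, a contradiction.

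Hence $\dim \phi(W)=2$. The stabilizer of $J_2$ under equivalence acts on $\phi(W)$ via $(b,c)\mapsto (bR,R^{-1}c)$ for $R\in \GL_2(\F_2)$, and preserves $q$; the two "coordinate" totally singular planes are again excluded by the reducedness argument, so $\phi(W)$ lies in one of the remaining orbits. After normalizing $\phi(W)$, every $M\in W$ takes a prescribed form in which $B(M)$ and $C(M)$ are explicit functions of a pair of parameters; the residual row and column operations $L_i \leftarrow L_i + \lambda L_3$ and $C_i \leftarrow C_i + \mu C_3$ (which fix $J_2$) then reduce $A(M)$. Finally, identity \eqref{fundid} applied to $M_3$ and to $M_3+N$ for $N$ running over a basis of $W$ pins down the entries of $M_3$, and a single explicit equivalence identifies the resulting generic matrix with that of $\calV_3(\F_2)$. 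The main obstacle will be the dim-$1$ subcase analysis: spotting and refuting the unexpected $\F_4$-subfield via the trace computation above is the delicate step, whereas the dim-$2$ case itself proceeds by a tedious but routine pin-down of entries.
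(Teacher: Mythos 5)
Your reduction to the totally singular subspaces of the hyperbolic form $q(b,c)=b\cdot c$ on $\F_2^2\oplus\F_2^2$ is a genuinely different organizing principle from the paper's (which instead uses the counting argument of Claim \ref{transitivityclaim} to pick a vector $x$ with $\dim\calV x=2$ and normalizes the rank-$2$ matrix annihilating \emph{that} $x$ to $J_2$), and the preliminary cases are sound: the $\dim\phi(W)\leq 1$ analysis works, including the nice $\F_4$-subfield refutation (though to extract $\tr A(N)=0$ you need \eqref{fundid} for all four matrices $M_3$, $M_3+J_2$, $M_3+N$, $M_3+N+J_2$, not three; and the degenerate subcases contradict the property $\dim\calV x\geq 2$, resp.\ $\dim\calV^T x\geq 2$, established from (H1) and Lemmas \ref{transrank1}--\ref{rank1}, rather than reducedness per se).

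The gap is in the final case $\dim\phi(W)=2$. After discarding the two coordinate planes, the stabilizer of $J_2$ has \emph{two} orbits of totally singular planes left, not one: the three ``product'' planes $\F_2 b_0\times\F_2 c_0$ with $b_0c_0=0$ (a single orbit), and the fixed ``graph'' plane $\{(b,Kb^T)\}$ with $K=\begin{bmatrix}0&1\\1&0\end{bmatrix}$. Both occur for $\calV_3(\F_2)$, depending on which of its fifteen rank-$2$ matrices you normalize to $J_2$: if $\Ker J_2$ is spanned by one of the four vectors $x$ with $\dim\calV x=2$, then two further non-zero matrices of $\calV$ kill $x$, so $\phi(W)$ contains a non-zero element with vanishing $C$-part and is a product plane; but if $\Ker J_2$ is spanned by one of the three vectors with $\dim\calV x=3$ (e.g.\ the matrix with kernel $\F_2(e_1+e_2)$ in the paper's final model), then no other matrix of $\calV$ shares the kernel of $J_2$, and none shares its range either, so $\phi(W)$ is forced to be the graph plane. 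Your ``single explicit equivalence'' therefore covers only one of two non-vacuous sub-cases; the pin-down computation must be run separately for the graph plane, which has a different shape (there $\Ker(\phi|_W)=\F_2 J_2$ and every $M\in W\setminus\F_2 J_2$ satisfies $C(M)=KB(M)^T\neq 0$). The cleanest repair is to import the paper's opening move: use Claim \ref{transitivityclaim} to choose $J_2$ annihilating a vector $x$ with $\dim\calV x=2$, which guarantees $\phi(W)$ is a product plane and eliminates the second orbit before it arises.
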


\begin{proof}
By Claim \ref{transitivityclaim}, we can choose a vector $x \in (\F_2)^3$ for which $\dim \calV x=2$.
Then, there is a non-zero matrix of $\calV$ which annihilates $x$, and this matrix has rank $2$.
Without loss of generality, we may assume that this matrix is $J_2$, in which case $x$ is the third vector of the standard
basis. Point (a) of Claim \ref{alternatingclaim} yields that $\calV x \neq \im J_2$;
Thus, we can choose a basis $(y,y')$ of $\calV x$ such that $y \in \im J_2$ and $y' \not\in \im J_2$,
then we choose $x_1 \in (\F_2)^3$ such that $J_2 x_1=y$, and we extend $(x_1,x)$ into a basis $(x_1,x_2,x)$ of $(\F_2)^3$. 
Then, by replacing $\calV$ with an equivalent subspace -- so that our new source basis is $(x_1,x_2,x)$ and our new target basis is $(y,J_2x_2,y')$ --
we see that no further generality is lost in assuming that $\calV x=\F_2 \times \{0\} \times \F_2$.
Since $\dim \calV=4$, we may extend $J_2$ into a basis $(J_2,M_1,M_2,M_3)$ of
$\calV$ with
$$M_1=\begin{bmatrix}
? & ? & 0 \\
? & ? & 0 \\
? & ? & 0
\end{bmatrix}, \; M_2=\begin{bmatrix}
? & ? & 1 \\
? & ? & 0 \\
? & ? & 0
\end{bmatrix} \quad \text{and} \quad
M_3=\begin{bmatrix}
? & ? & 0 \\
? & ? & 0 \\
? & ? & 1
\end{bmatrix}.$$
Adding $J_2$ to $M_1$ and $M_2$ if necessary, we may assume that
$$M_1=\begin{bmatrix}
0 & ? & 0 \\
? & ? & 0 \\
? & ? & 0
\end{bmatrix} \quad \text{and} \quad  M_2=\begin{bmatrix}
? & ? & 1 \\
? & 0 & 0 \\
? & ? & 0
\end{bmatrix}.$$
Applying identity \eqref{fundid2} to $M_2$ and $M_1+M_2$, we find that
$$M_1=\begin{bmatrix}
0 & ? & 0 \\
? & ? & 0 \\
0 & ? & 0
\end{bmatrix}, \; M_2=\begin{bmatrix}
? & ? & 1 \\
? & 0 & 0 \\
0 & ? & 0
\end{bmatrix}.$$
Assume first that $B(M_1)=0$.
As $\dim(\calV^T e_3) \geq 2$, we must have $B(M_2)\neq 0$, whence
$B(M_2)=\begin{bmatrix}
0 & 1
\end{bmatrix}$.
Then, as $M_2$ is singular, we find that $(M_2)_{2,1}=0$ and, as
$M_1+M_2$ is singular, we also obtain $(M_1)_{2,1}=0$. Thus, the first and third columns of $M_1$
equal zero, whence $M_1$ has rank $1$, which is absurd.

We deduce that $B(M_1)=\begin{bmatrix}
0 & 1
\end{bmatrix}$. Then, as we lose no generality in replacing $M_2$ with a matrix of the form $M_2+a\,M_1+b\,J_2$, we can assume that $B(M_2)=0$.
From there, using column operations of the form $C_1 \leftarrow C_1+\lambda C_3$ and $C_2 \leftarrow C_2+\mu C_3$,
we see that no generality is lost in assuming that
$$M_2=\begin{bmatrix}
0 & 0 & 1 \\
? & 0 & 0 \\
0 & 0 & 0
\end{bmatrix}.$$
As $\rk M_2>1$, we deduce that
$$M_2=\begin{bmatrix}
0 & 0 & 1 \\
1 & 0 & 0 \\
0 & 0 & 0
\end{bmatrix}.$$
If the entry of $M_1$ at the $(1,2)$-spot equals $1$, then we perform the row operation
$L_1 \leftarrow L_1+L_3$ and then we replace $M_3$ with $M_3+M_2$.
This shows that no generality is lost  in assuming that
$$M_1=\begin{bmatrix}
0 & 0 & 0 \\
? & ? & 0 \\
0 & 1 & 0
\end{bmatrix}.$$
As $M_1$ has rank $2$, we find
$$M_1=\begin{bmatrix}
0 & 0 & 0 \\
1 & a & 0 \\
0 & 1 & 0
\end{bmatrix} \quad \text{for some $a \in \F_2$.}$$
From there, we lose no generality in adding a linear combination of
$M_1$ and $J_2$ to $M_3$, whence we may assume that
$$M_3=\begin{bmatrix}
0 & c & 0 \\
0 & d & 0 \\
b & e & 1
\end{bmatrix} \quad \text{for some $(b,c,d,e)\in (\F_2)^4$.}$$
As $\det(J_2+M_3)=0$, $\det(M_1+M_3)=0$ and $\det(J_2+M_1+M_3)=0$,
we find $d=1$, $c=0$ and $a=0$, successively.
Finally, using $\det(M_2+M_3)=0$ and $\det(J_2+M_2+M_3)=0$, we find $b=e$ and $e=0$.
Thus, $\calV$ is the span of the matrices
$$J_2=\begin{bmatrix}
1 & 0 & 0 \\
0 & 1 & 0 \\
0 & 0 & 0
\end{bmatrix}, \; M_1=\begin{bmatrix}
0 & 0 & 0 \\
1 & 0 & 0 \\
0 & 1 & 0
\end{bmatrix}, \; M_2=\begin{bmatrix}
0 & 0 & 1 \\
1 & 0 & 0 \\
0 & 0 & 0
\end{bmatrix} \quad \text{and} \quad M_3=\begin{bmatrix}
0 & 0 & 0 \\
0 & 1 & 0 \\
0 & 0 & 1
\end{bmatrix},$$
whence it is associated with the generic matrix
$$\begin{bmatrix}
\mathbf{a} & 0 & \mathbf{c} \\
\mathbf{b}+\mathbf{c} & \mathbf{a}+\mathbf{d} & 0 \\
0 & \mathbf{b} & \mathbf{d}
\end{bmatrix}.$$
Using the column operations $C_2 \leftrightarrow C_1$ and $C_3 \leftrightarrow C_2$, we conclude that $\calV$
is equivalent to $\calV_3(\F_2)$, as claimed.
\end{proof}

This completes the proof of Theorem \ref{classrank2F2}.

\subsection{Application to maximal spaces of matrices with upper rank $2$}

\begin{Not}
Given integers $n' \in \lcro 0,n\rcro$ and $p' \in \lcro 0,p\rcro$ together with a subspace $\calW$ of $\Mat_{n',p'}(\K)$, we
denote by $\widetilde{\calW}^{(n,p)}$ the space of all $n \times p$ matrices of the form
$$\begin{bmatrix}
M & [0]_{n'  \times (p-p')} \\
[0]_{(n-n') \times p'} & [0]_{(n-n') \times (p-p')}
\end{bmatrix} \quad \text{with $M \in \calW$.}$$
Note that $\widetilde{\calW}^{(n,p)}$ has the same upper rank as $\calW$.
\end{Not}

Let $\calS$ be a linear subspace of $\calL(U,V)$, where $U$ and $V$ are finite-dimensional vector spaces.
We define the kernel and the range of $\calS$ as, respectively, $\Ker S:=\underset{f \in \calS}{\bigcap} \Ker f$
and $\im \calS:=\underset{f \in \calS}{\sum} \im f$. Then, every operator $f \in \calS$ induces a linear operator
$\overline{f} : U/\Ker \calS \rightarrow \im \calS$, and one sees that the operator space $\overline{\calS} :=\{\overline{f} \mid f \in \calS\}$
is a reduced linear subspace with the same dimension and the same upper rank as $\calS$: It is called the \textbf{reduced operator space} of $\calS$. Finally, two operator subspaces
$\calS$ and $\calT$ of $\calL(U,V)$ are equivalent if and only if $\dim \Ker \calS=\dim \Ker \calT$,
$\dim \im \calS=\dim \im \calT$ and the operator spaces $\overline{\calS}$ and $\overline{\calT}$ are equivalent.

In terms of matrices, this reads as follows:

\begin{prop}\label{uniquenessprop}
Let $\calV$ be a linear subspace of $\Mat_{n,p}(\K)$ with upper rank $r$.
Then, there is a pair $(n',p')\in \lcro 0,n\rcro \times \lcro 0,p\rcro$ and a reduced linear subspace $\calV'$ of
$\Mat_{n',p'}(\K)$ such that $\calV \sim \widetilde{\calV'}^{(n,p)}$.
The pair $(n',p')$ is uniquely determined by $\calV$, and the equivalence class of $\calV'$ is uniquely determined by that of $\calV$.
\end{prop}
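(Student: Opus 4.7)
The plan is to switch to the intrinsic operator viewpoint: regard $\calV$ as a linear subspace $\calS$ of $\calL(\K^p,\K^n)$, and work with the canonically defined subspaces $\Ker \calS \subset \K^p$ and $\im \calS \subset \K^n$. The key observation is that existence of the reduced form amounts to choosing bases adapted to $\Ker \calS$ and $\im \calS$, while uniqueness reduces to a routine, but careful, analysis of how an equivalence $(P,Q)$ between two block-extended spaces must itself respect the block decomposition.

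For existence, I would set $p' := p - \dim \Ker \calS$ and $n' := \dim \im \calS$. Choose a complement $U'$ of $\Ker \calS$ inside $\K^p$ and a complement $V''$ of $\im \calS$ inside $\K^n$, and select a basis $(u_1,\dots,u_{p'},u_{p'+1},\dots,u_p)$ of $\K^p$ adapted to the direct sum $U' \oplus \Ker \calS$, and a basis $(v_1,\dots,v_{n'},v_{n'+1},\dots,v_n)$ of $\K^n$ adapted to $\im \calS \oplus V''$. In these bases every $f \in \calS$ kills the last $p-p'$ basis vectors of the source and sends the others into $\im \calS$, so its matrix takes the form $\begin{bmatrix} M & 0 \\ 0 & 0 \end{bmatrix}$. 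Setting $\calV'$ to be the space of top-left blocks yields a subspace of $\Mat_{n',p'}(\K)$, and it is reduced: its common kernel corresponds to $U' \cap \Ker \calS = \{0\}$, and the sum of its ranges corresponds to $\im \calS$ expressed in the chosen basis of $\im \calS$, which is all of $\K^{n'}$. The upper rank is preserved since the block-extension operation is rank-preserving.

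For uniqueness of $(n',p')$, I would observe that the common kernel of $\widetilde{\calV'}^{(n,p)}$ is exactly $\{0\}\times \K^{p-p'}$ and its total range is $\K^{n'}\times \{0\}$, so the dimensions $\dim \Ker \calV$ and $\dim \im \calV$ depend only on the equivalence class of $\calV$ and force the values of $p'$ and $n'$. For uniqueness of the equivalence class of $\calV'$, suppose $P \widetilde{\calV'_1}^{(n,p)} Q = \widetilde{\calV'_2}^{(n,p)}$ with both $\calV'_1,\calV'_2$ reduced of the same shape $n'\times p'$. Since $Q^{-1}$ must carry $\{0\}\times \K^{p-p'}$ (the common kernel on the right) to itself, $Q$ is block upper-triangular with invertible diagonal blocks $Q_{11}$ of size $p' \times p'$ and $Q_{22}$; symmetrically, $P$ is block lower-triangular with invertible $P_{11}$ of size $n' \times n'$ and $P_{22}$.

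Carrying out the block multiplication $P \widetilde{M}Q$ for an arbitrary $M \in \calV'_1$ and demanding that the result lie in $\widetilde{\calV'_2}^{(n,p)}$ gives two ``off-diagonal'' vanishing conditions $P_{11} M Q_{12} = 0$ and $P_{21} M Q_{11} = 0$ valid for all $M \in \calV'_1$; since $P_{11},Q_{11}$ are invertible and $\calV'_1$ is reduced (trivial common kernel and range equal to the whole target), both $Q_{12}$ and $P_{21}$ must vanish. What remains is the identity $\calV'_2 = P_{11}\calV'_1 Q_{11}$, i.e.\ $\calV'_1 \sim \calV'_2$. I expect the only delicate point is this last step: proving that the off-diagonal blocks of $P$ and $Q$ are forced to be zero by the reducedness of $\calV'_1$, which is precisely where conditions (i) and (ii) in the definition of reducedness are used.
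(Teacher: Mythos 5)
Your proposal is correct and follows essentially the route the paper intends: the paper derives this proposition from the intrinsic operator-space picture ($\Ker\calS$, $\im\calS$, and the induced reduced space on $U/\Ker\calS \to \im\calS$), and your argument is the explicit matrix-level version of exactly that, with existence via bases adapted to a complement of $\Ker\calS$ and to $\im\calS$, and uniqueness via the fact that any equivalence must carry the common kernel and total range onto each other. The only cosmetic wrinkle is that the kernel/range preservation already forces $Q_{12}=0$ and $P_{21}=0$ (and hence the invertibility of $Q_{11}$ and $P_{11}$), so your subsequent derivation of these vanishings from the block product and the reducedness of $\calV'_1$ is redundant rather than wrong.
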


Note that $\calV'$ represents the reduced operator space of $\calV$ (seen as a space of linear maps from $\K^p$ to $\K^n$).
If $\calV$ is equivalent to a subspace of $\calR(2,0)$ (respectively, of $\calR(0,2)$), then $n' \leq 2$ (respectively, $p' \leq 2$).
Moreover, if $\calV'$ is equivalent to a subspace of $\calR(1,1)$, then $\calV$ is equivalent to a subspace of $\calR(1,1)$.
Conversely, assuming that $\calV \subset \calR(1,1)$, then we have a hyperplane $H$ of $\K^p$ and a $1$-dimensional subspace $D$ of $\K^n$ such that
$\calV x \subset D$ for all $x \in H$. Then, for $H':=(H+\Ker \calV)/\Ker \calV$ and $D':=D \cap \im \calV$,
we see that $\overline{f}(x) \in D$ for all $x \in H'$ and all $f \in \calV$, and $H'$ has codimension at most $1$ in $\K^p/\Ker \calV$ whereas
$D'$ has dimension at most $1$. It follows that $\calV'$ is equivalent to a subspace of $\calR(1,1)$.
From the above considerations combined with Proposition \ref{basicsemiprimitiveprop}
and Theorem \ref{classrank2F2}, we deduce the following structure theorem on subspaces of matrices of $\Mat_3(\F_2)$ with rank at most $2$:

\begin{theo}\label{allrank2spaces}
Let $\calV$ be an upper rank $2$ subspace of $\Mat_{n,p}(\F_2)$, with $n \geq 3$ and $p \geq 3$.
Then, one and only one of the following cases holds:
\begin{enumerate}[(i)]
\item $\calV$ is equivalent to a subspace of $\calR(2,0)$;
\item $\calV$ is equivalent to a subspace of $\calR(0,2)$;
\item $\calV$ is equivalent to a subspace of $\calR(1,1)$;
\item $\calV$ is equivalent to $\widetilde{\calW}^{(n,p)}$, where $\calW$ is a primitive linear subspace of $\calJ_3(\F_2)$;
\item $\calV$ is equivalent to $\widetilde{\Mata_3(\F_2)}^{(n,p)}$;
\item $\calV$ is equivalent to $\widetilde{\calU_3(\F_2)}^{(n,p)}$;
\item $\calV$ is equivalent to $\widetilde{\calV_3(\F_2)}^{(n,p)}$.
\end{enumerate}
Moreover, if case (iv) holds, then the equivalence class of $\calW$ is uniquely determined by that of $\calV$.

\end{theo}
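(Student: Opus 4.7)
My plan is to reduce the classification to the level of the reduced operator space and then apply, in order, the three prior results already available: Proposition \ref{basicsemiprimitiveprop}, Proposition \ref{uniquenessprop}, and Theorem \ref{classrank2F2}. Since no genuinely new classification work is required, I do not anticipate a serious obstacle; the only delicate point is the lifting from a statement about the reduced operator space $\calV'$ to the analogous statement about $\calV$, which is already carried out in the discussion immediately preceding the theorem.

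First, I would invoke Proposition \ref{uniquenessprop} to attach to $\calV$ a unique pair $(n',p')\in\lcro 0,n\rcro \times \lcro 0,p\rcro$ together with a reduced linear subspace $\calV'$ of $\Mat_{n',p'}(\F_2)$, unique up to equivalence, such that $\calV \sim \widetilde{\calV'}^{(n,p)}$. Since equivalence preserves the upper rank, $\calV'$ has upper rank $2$, so $n' \geq 2$ and $p' \geq 2$. I then split into cases. If $n' \leq 2$, then $\widetilde{\calV'}^{(n,p)}$ is directly a subspace of $\calR(2,0)$, giving case (i); if $p' \leq 2$, case (ii) follows symmetrically. Otherwise $n' \geq 3$ and $p' \geq 3$, and Proposition \ref{basicsemiprimitiveprop} tells me that $\calV'$ is either primitive or equivalent to a subspace of $\calR(1,1)$. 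In the latter situation, the lifting argument spelled out in the paragraph preceding the theorem yields that $\calV$ itself is equivalent to a subspace of $\calR(1,1)$, giving case (iii). In the former situation, Theorem \ref{classrank2F2} forces $n'=p'=3$ and places $\calV'$ in one of the four equivalence classes listed there, yielding cases (iv) through (vii); note that in case (iv) the subspace of $\calJ_3(\F_2)$ in question is automatically primitive because primitivity is preserved under equivalence.

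For the uniqueness clause in case (iv), I would observe that if $\calV$ is equivalent to both $\widetilde{\calW_1}^{(n,p)}$ and $\widetilde{\calW_2}^{(n,p)}$ with $\calW_1$ and $\calW_2$ primitive subspaces of $\calJ_3(\F_2)$, then each $\calW_i$ is reduced (primitivity entails reducedness), so the uniqueness assertion in Proposition \ref{uniquenessprop} forces $\calW_1 \sim \calW_2$, as required.
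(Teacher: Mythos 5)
Your argument is correct and follows essentially the same route as the paper's own derivation: reduce to the reduced operator space via Proposition \ref{uniquenessprop}, dispose of the small cases $n'\leq 2$ or $p'\leq 2$, use Proposition \ref{basicsemiprimitiveprop} to split the remaining case into ``equivalent to a subspace of $\calR(1,1)$'' versus ``primitive,'' and invoke Theorem \ref{classrank2F2}, with the uniqueness clause in (iv) coming from the uniqueness part of Proposition \ref{uniquenessprop}. Like the paper, you establish exhaustiveness and the uniqueness in case (iv) but leave the pairwise exclusivity of cases (i)--(iii) implicit, which is exactly the level of detail the paper itself adopts.
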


As a consequence, we get:

\begin{theo}[Classification of maximal spaces of matrices with rank at most $2$]
Let $n>2$ and $p>2$.
Up to equivalence, there are $6$ maximal subspaces of upper rank $2$ matrices of $\Mat_{n,p}(\F_2)$:
$$\calR(2,0), \quad \calR(0,2), \quad \calR(1,1), \quad \widetilde{\calJ_3(\F_2)}^{(n,p)}, \quad \widetilde{\Mata_3(\F_2)}^{(n,p)} \quad \text{and} \quad
\widetilde{\calV_3(\F_2)}^{(n,p)}.$$
\end{theo}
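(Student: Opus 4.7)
The theorem is a short consequence of Theorem \ref{allrank2spaces}, and my plan is to organize the proof in three stages. First, from each of the seven equivalence classes enumerated in Theorem \ref{allrank2spaces}, I would pick out the maximal representative by inclusion: for classes (i), (ii), (iii) these are $\calR(2,0)$, $\calR(0,2)$, $\calR(1,1)$ themselves; for class (iv), parametrized by primitive subspaces $\calW$ of $\calJ_3(\F_2)$, Proposition \ref{J3structure} guarantees that $\calJ_3(\F_2)$ itself is primitive, giving the maximal choice $\widetilde{\calJ_3(\F_2)}^{(n,p)}$; classes (v), (vi), (vii) have a single representative each. Since $\calU_3(\F_2) \subsetneq \calV_3(\F_2)$ yields $\widetilde{\calU_3(\F_2)}^{(n,p)} \subsetneq \widetilde{\calV_3(\F_2)}^{(n,p)}$, the class (vi) representative is properly contained in that of class (vii), so it is not maximal; the remaining $6$ representatives are pairwise inequivalent by the uniqueness clause of Theorem \ref{allrank2spaces}.

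Second, I would establish maximality of each of the $6$ candidates. Assuming $\calM \subsetneq \calV$ with $\urk \calV = 2$, Theorem \ref{allrank2spaces} places $\calV$ in some class $c$, and I would split into two cases. When $c \in \{\text{(i)}, \text{(ii)}, \text{(iii)}\}$, the containment $\calM \subseteq \calV$ combined with class uniqueness forces $\calM$ to belong to the same class $c$, and a straightforward dimension comparison against the maximum of that class then contradicts $\calM \subsetneq \calV$. When $c \in \{\text{(iv)}, \text{(v)}, \text{(vi)}, \text{(vii)}\}$, the class representative confines its image to a fixed $3$-dimensional subspace and its kernel contains a fixed $(p-3)$-dimensional subspace; a block analysis of the equivalence carrying $\calV$ to the class representative then translates $\calM \subseteq \calV$ into a containment of the corresponding reduced $3 \times 3$ spaces in $\Mat_3(\F_2)$, and uniqueness in Theorem \ref{classrank2F2} combined with dimension bounds handles the bulk of these cases.

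The main obstacle is the residual sub-case $\calM = \widetilde{\Mata_3(\F_2)}^{(n,p)}$ with $c = \text{(vii)}$, where the dimensions $3 < 4$ are compatible and one must prove that $\Mata_3(\F_2)$ is not equivalent to any $3$-dimensional subspace of $\calV_3(\F_2)$. Here I would invoke Claim \ref{alternatingclaim}(b): any space equivalent to $\Mata_3(\F_2)$ must satisfy $\calS \Ker M = \im M$ for every non-zero $M \in \calS$. For each canonical basis vector $e_i$ of $\F_2^3$, the subspace $\{M \in \calV_3(\F_2) : M e_i = 0\}$ has dimension $2$, so the dimension formula shows that any $3$-dimensional $\calM \subseteq \calV_3(\F_2)$ meets it non-trivially and contains a non-zero $M$ with $\Ker M = \F_2 e_i$. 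A direct check over the few possible choices of such $M$ shows that $\im M$ is never contained in the $2$-dimensional subspace $\calV_3(\F_2) e_i$; in particular $\im M \neq \calM e_i \subseteq \calV_3(\F_2) e_i$, contradicting the characterisation of $\Mata_3(\F_2)$ and completing the argument.
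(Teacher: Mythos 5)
Your argument is correct, and the bookkeeping in your first two stages is exactly what the paper dismisses as routine: the paper states that the only non-trivial point in deriving this theorem from Theorem \ref{allrank2spaces} and Proposition \ref{uniquenessprop} is the maximality of $\Mata_3(\F_2)$ among the upper-rank-$2$ subspaces of $\Mat_3(\F_2)$ --- precisely the residual sub-case you isolate. Where you genuinely diverge is in how that point is settled. The paper proves a stronger, self-contained statement: for every odd $n$ and every field $\F$, the space $\Mata_n(\F)$ is a maximal subspace of singular matrices of $\Mat_n(\F)$; over $\F_2$ this is done by showing that for any $P \notin \Mata_n(\F_2)$ the coset $P+\Mata_n(\F_2)$ contains a non-singular matrix representing the quadratic form $X \mapsto X^TPX$, using the theory of quadratic forms in characteristic $2$ (the contamination lemma, splitting off the radical, etc.). You instead stay inside the classification already established: any upper-rank-$2$ overspace of $\widetilde{\Mata_3(\F_2)}^{(n,p)}$ must, by exclusivity of the cases and dimension counting, reduce to a copy of $\Mata_3(\F_2)$ sitting inside a space equivalent to $\calV_3(\F_2)$, and you rule that out with the equivalence-invariant property $\calS\,\Ker M=\im M$ from Claim \ref{alternatingclaim}(b) together with a finite check. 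That check does go through: the non-zero matrices of $\calV_3(\F_2)$ annihilating $e_1$ are those with $c=0$ and $a=b$ in the generic matrix, and none of them has range contained in $\calV_3(\F_2)e_1=\{0\}\times(\F_2)^2$, so no $3$-dimensional subspace of $\calV_3(\F_2)$ can satisfy the characterising property of $\Mata_3(\F_2)$. Your route is more elementary (no quadratic form theory) and entirely adequate here, at the cost of being specific to $n=3$ and $\F_2$ and of leaning on the full classification theorem; the paper's route buys a general maximality statement for $\Mata_n(\F)$ that is of independent interest. The only phrasing to tighten is in your second stage: membership of $\calV$ in one of the classes (i)--(iii) is inherited downward by $\calM\subseteq\calV$ (these classes are closed under passing to subspaces), whereas classes (iv)--(vii) are not, so the clean statement is that exclusivity forces the class of $\calV$ to coincide with that of $\calM$ only after the dimension bound $\dim\calV>\dim\calM\geq 5$ has eliminated (iv)--(vii).
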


The only non-trivial point in the derivation of that theorem from Theorem \ref{allrank2spaces} and Proposition \ref{uniquenessprop}
is to see that $\Mata_3(\F_2)$ is maximal among the subspaces of $\Mat_3(\F_2)$ with upper rank $2$.
This is obtained as a special case of the following general result:

\begin{prop}
Let $n$ be an odd integer and $\F$ be an arbitrary field. Then, $\Mata_n(\F)$ is a maximal subspace of singular matrices of $\Mat_n(\F)$.
\end{prop}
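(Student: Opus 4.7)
The proof reduces to the following local claim: for every matrix $M \in \Mat_n(\F)\setminus \Mata_n(\F)$ with $n$ odd, there exists an alternating matrix $A$ such that $M + A$ is invertible. Indeed, any subspace $\calV$ of $\Mat_n(\F)$ strictly containing $\Mata_n(\F)$ must contain some such $M$, and then $M+A \in \calV$ is invertible, contradicting that $\calV$ consists only of singular matrices. Thus my plan is to establish this local claim.

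The key tool is the quadratic form $Q_M \colon x \mapsto x^T M x$. Since $x^T A x = 0$ for every alternating $A$ (in every characteristic), $Q_M$ depends only on the coset $M + \Mata_n(\F)$, and reading off its coefficients shows that $Q_M$ is the zero polynomial exactly when $M$ is alternating. Because $Q_M \neq 0$ as a homogeneous polynomial of degree two, it attains a non-zero value: either some $M_{ii} \neq 0$ (whence $Q_M(e_i) \neq 0$) or some $M_{ij}+M_{ji} \neq 0$ with $i<j$ (whence $Q_M(e_i+e_j) \neq 0$). Picking $v \in \F^n$ with $Q_M(v) \neq 0$ and using the congruence $M \mapsto P^T M P$ (which preserves both $\Mata_n(\F)$ and the singularity condition), I may assume $v=e_1$, i.e., $M_{11} \neq 0$.

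For the construction, I split by characteristic. In characteristic different from $2$, the symmetric matrix $\frac{1}{2}(M+M^T)$ is non-zero and belongs to $M + \Mata_n(\F)$, so without loss of generality $M$ is symmetric; by further congruence I may take $M = \operatorname{diag}(d_1,\dots,d_r,0,\dots,0)$ with $d_i \neq 0$ and $r \geq 1$. If $n-r$ is even, the alternating matrix made of an invertible alternating block (e.g.\ a block sum of copies of $\bigl(\begin{smallmatrix}0 & 1 \\ -1 & 0\end{smallmatrix}\bigr)$) placed on the zero part yields an invertible sum. If $n-r$ is odd, then the parity of $n$ forces $r \geq 2$ to be even; I then pair the last non-zero entry $d_r$ with the first zero entry to form the invertible $2 \times 2$ block $\bigl(\begin{smallmatrix}d_r & 1 \\ -1 & 0\end{smallmatrix}\bigr)$, leaving a zero block of even size $n-r-1$ to be filled with an invertible alternating matrix; the result is block-diagonal and invertible. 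In characteristic $2$, I would use the decomposition $\Mat_n(\F) = T_n^+(\F) \oplus \Mata_n(\F)$ (valid in every characteristic, since a matrix that is simultaneously upper-triangular and alternating vanishes) to take an upper-triangular representative of the coset, then proceed by induction on the odd integer $n$: use the vector $v$ either to split off a $1$-dimensional orthogonal summand (when $v$ lies in the radical of the polarization $B_{Q_M}$) or to split off a $2$-dimensional non-degenerate block (otherwise, pairing $v$ with a companion $u$ satisfying $B_{Q_M}(v,u) \neq 0$), reducing the problem to a coset in $\Mat_{n-2}(\F)$, which is again of odd size.

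The main obstacle will be the characteristic $2$ case: the polarization $B_{Q_M}$ is automatically alternating rather than symmetric, no diagonal normal form is available under congruence, and over $\F_2$ a hyperbolic form on a $2$-dimensional subspace admits no invertible $2 \times 2$ representative. The construction must therefore combine the blocks carefully, exploiting the fact that $n$ being odd guarantees at least one ``extra'' dimension that can be merged with any problematic $2$-dimensional piece to restore invertibility, in the same spirit as the merging trick used in the characteristic-different-from-$2$ case.
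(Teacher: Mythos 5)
Your global strategy matches the paper's: reduce maximality to the claim that every non-alternating coset $M+\Mata_n(\F)$ contains an invertible matrix, and identify such a coset with the set of matrices representing the non-zero quadratic form $x\mapsto x^TMx$. Your characteristic $\neq 2$ argument is complete and correct, and in fact more self-contained than the paper, which simply cites Fillmore--Laurie--Radjavi for $\#\F>2$: diagonalizing the symmetric representative and filling the kernel with an invertible alternating block (merging one non-zero diagonal entry with one zero entry when the parities demand it) is airtight.

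The characteristic $2$ case --- the only case the paper proves in detail and the only one it actually uses --- remains a plan rather than a proof, and the plan as stated does not close. Two concrete problems. First, the induction ``split off a $1$- or $2$-dimensional block and recurse on a coset in $\Mat_{n-2}(\F)$'' breaks down when the residual form is zero on an odd-dimensional space: every representative of the zero form is alternating, hence singular in odd size, so after peeling the anisotropic plane off $q\simeq [1,1]\perp 0_{n-2}$ you are stuck; the spare radical line must be merged with a binary block \emph{before} recursing, and you never say how. Second, over $\F_2$ the hyperbolic plane $[0,0]$ admits no invertible $2\times 2$ representative (as you note), but $q$ may contain several such planes while the odd parity of $n$ supplies only one spare radical line, so ``merge the problematic piece with the extra dimension'' cannot handle, e.g., $[0,0]\perp[0,0]\perp\langle 0\rangle$ on $(\F_2)^5$. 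What is missing is precisely the pair of classification facts the paper leans on: the contamination lemma $[a,b]\perp\langle 1\rangle\simeq[a+1,b]\perp\langle 1\rangle$, which (when the radical carries a non-zero form) converts every binary block into the anisotropic $[1,1]$, which does have an invertible $2\times 2$ representative; and the identity $[0,0]\perp[0,0]\simeq[1,1]\perp[1,1]$, which (when the radical is totally isotropic) reduces the defect to a single block $[0,0]\perp\langle 0\rangle$ or $[1,1]\perp\langle 0\rangle$, for which one must then exhibit an explicit invertible $3\times 3$ representative. Without these ingredients the characteristic-$2$ portion is a statement of intent, not a proof.
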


\begin{proof}
For the case when $\# \F>2$, we refer to \cite[Proposition 5]{FillmoreLaurieRadjavi}.
Thus, we shall only consider the case when $\F=\F_2$. We note that the problem is tightly connected to the representation of quadratic forms.
We refer to \cite[Chapter XXXII]{invitquad} for the basics on quadratic forms over fields of characteristic $2$.
Let $P \in \Mat_n(\F_2) \setminus \Mata_n(\F_2)$. We have to show that $P+\Mata_n(\F_2)$ contains a non-singular matrix.
We consider the non-zero quadratic form $q : X \in \F_2^n \mapsto X^T PX$.
The set of matrices $Q \in \Mat_n(\F_2)$ that represent $q$, i.e.\ such that, in some basis of $(\F_2)^n$,
the map $X \mapsto X^T Q X$ corresponds to $q$, is precisely $\Co(P)+\Mata_n(\F_2)$, where $\Co(P)$ denotes the congruence class of $P$,
that is the set of all matrices $RQR^T$ with $R \in \GL_n(\F_2)$. As $\Mata_n(\F_2)$ is invariant under congruence,
it suffices to find a non-singular matrix which represents $q$.
The rank of the polar form of $q$ equals $2r$ for some non-negative integer $r$.
As $n$ is odd, the radical of $q$ is odd-dimensional, and hence non-zero.
The restriction of $q$ to its radical is a linear form.
We shall now distinguish between two cases, whether this linear form is zero or not.
For $(a,b)\in (\F_2)^2$, we denote by $[a,b]$ the quadratic form $(x,y) \mapsto ax^2+xy+by^2$ on $(\F_2)^2$, and by
$\langle a\rangle$ the quadratic form $x \mapsto ax^2$. The orthogonal direct sum of two quadratic forms $q_1$ and $q_2$
is denoted by $q_1 \bot q_2$.

\noindent \textbf{Case 1.} The restriction of $q$ to its radical $R$ is non-zero. \\
Then, we may choose a basis of $R$ in which no vector is $q$-isotropic
(indeed, the set of $q$-isotropic vectors in $R$ is a linear hyperplane of $R$, and hence its complementary set
in $R$ spans $R$).
This yields pairs $(a_1,b_1),\dots,(a_r,b_r)$ in $(\F_2)^2$ such that
$q$ is equivalent to $[a_1,b_1] \bot \cdots \bot [a_r,b_r] \bot \langle 1\rangle \bot \cdots \bot \langle 1 \rangle$.
However, the contamination lemma \cite[Chapter XXXII, Lemma 5.4.2]{invitquad} shows that, for all $(a,b)\in (\F_2)^2$,
$$[a,b] \bot \langle 1\rangle \simeq [a+1,b] \bot \langle 1\rangle \simeq [a,b+1] \bot \langle 1\rangle
\simeq  [a+1,b+1] \bot \langle 1\rangle.$$
Using this repeatedly, we deduce that $q$ is equivalent to $r.[1,1] \bot (n-2r).\langle 1\rangle$,
 whence the invertible matrix
 $\begin{bmatrix}
 I_r & I_r \\
 0 & I_r
 \end{bmatrix} \oplus I_{n-2r}$ represents $q$.

\noindent \textbf{Case 2.} The restriction of $q$ to its radical is zero. \\
Then, $r \geq 1$ as $q$ is non-zero. Using
 the equivalence $[1,1] \bot [1,1] \simeq [0,0] \bot [0,0]$ (see \cite[Chapter XXXII, Example 4.2.3]{invitquad}),
 we find that $q$ is equivalent to $(r-1).[1,1]\bot (n-2r-1).\langle 0\rangle \bot \varphi$,
 where $\varphi$ equals either $[0,0] \bot \langle 0\rangle$ or $[1,1] \bot \langle 0\rangle$.
 As $n$ is odd, we have $n-2r-1=2s$ for some non-negative integer $s$, whence
  $(r-1).[1,1]\bot (n-2r-1).\langle 0\rangle$ is represented by the non-singular matrix
  $\begin{bmatrix}
  I_{r-1} & I_{r-1}\\
  0 & I_{r-1}
  \end{bmatrix} \oplus \begin{bmatrix}
  0 & I_s\\
  I_s & 0
  \end{bmatrix}$. Therefore, it only remains to prove that $\varphi$ is represented by at least one non-singular matrix.
 If $\varphi$ equals $[0,0] \bot \langle 0\rangle$, then it is represented by the non-singular matrix $\begin{bmatrix}
 0 & 1 & 1 \\
 0 & 0 & 1 \\
 1 & 1 & 0
 \end{bmatrix}$. If $\varphi$ equals $[1,1] \bot \langle 0\rangle$, then it is represented by the non-singular matrix
  $\begin{bmatrix}
 1 & 1 & 0 \\
 0 & 1 & 1 \\
 0 & 1 & 0
 \end{bmatrix}$. In any case, the conclusion follows that $q$ is represented by at least one non-singular matrix.
\end{proof}

\section{Primitive linear subspaces of $\calJ_3(\F_2)$}\label{structureofJ3section}

\subsection{A rough result on the primitive subspaces of $\calJ_3(\F_2)$}

Let us prove Proposition \ref{J3structure}.
Let $\calV$ be a linear subspace of $\calJ_3(\F_2)$, and consider the space $\calD \subset (\F_2)^3$
of all diagonal vectors in $\calV$, that is the space of all vectors $\begin{bmatrix}
m_{1,1} & m_{2,2} & m_{3,3}
\end{bmatrix}^T$ with $M \in \calV$.
We know that $\calD$ is included in the hyperplane $\calH=\{(a,b,c) \in (\F_2)^3 : \; a+b+c=0\}$.

If $\calD=\{0\}$, then it is obvious that $\calV$ is not reduced, whence it is not primitive. \\
Assume now that $\dim \calD=1$. Then, $\calD$ contains a sole non-zero vector which we write $\begin{bmatrix}
a & b & c
\end{bmatrix}^T$. As $\calD$ is reduced, we must have $a=1$ and $c=1$. Thus, $b=0$ and, by swapping the first and third columns,
we see that $\calV$ is equivalent to a linear subspace of $\calR(1,1)$.

Now, we assume that $\calD=\calH$ and we prove that $\calV$ is primitive.
Note that $\calV^T$ is equivalent to a linear subspace of $\calJ_3(\F_2)$ for which the space of all diagonal vectors is $\calH$.
Now, let $X=\begin{bmatrix}
x & y & z
\end{bmatrix}^T \in (\F_2)^3$ be such that $MX=0$ for all $M \in \calV$. Looking at the third entry of the matrix $MX$,
we deduce that $z=0$. Then, we successively find $y=0$ and $x=0$ by looking at the second entry of $MX$ and then at the first one.
Thus, $\calV$ satisfies condition (i) in the definition of a primitive space. For the same reason $\calV^T$
also does, whence $\calV$ is reduced.

Denote by $(e_1,e_2,e_3)$ the standard basis of $(\F_2)^3$. Let $x \in (\F_2)^3 \setminus \F_2 e_1$.
We contend that $\calV x \neq \F_2 e_1$. Indeed,
if the third entry of $x$ equals $1$ then, as we know that some matrix of $\calV$ has entry $1$ at the $(3,3)$-spot,
we see that $\calV x \neq \F_2 e_1$; Otherwise, the second entry of $x$ equals $1$ and as some matrix of  $\calV$ has entry $1$ at the $(2,2)$-spot
we obtain that $\calV x \neq \F_2 e_1$.

Now, assume that $\calV$ is non-primitive. Then, as it is reduced, it must be equivalent to a subspace of $\calR(1,1)$, which
yields a $2$-dimensional subspace $P$ of $(\F_2)^3$ and a $1$-dimensional subspace $D$ of $(\F_2)^3$ such that
$\calV x \subset D$ for all $x \in P$.
As $P \not\subset \F_2 e_1$, the above proof yields that $D \neq \F_2 e_1$
whence $e_1 \not\in P$. Therefore, $(\F_2)^3=\F_2 e_1\oplus P$, which yields
$\calV x \in D+\F_2 e_1$ for all $x \in (\F_2)^3$, contradicting the fact that $\calV$ is reduced.

We conclude that $\calV$ is primitive, which finishes the proof of Proposition \ref{J3structure}.

\subsection{The full classification of primitive subspaces of $\calJ_3(\F_2)$}

Now, we shall give a full classification, up to equivalence, of the primitive
subspaces of $\calJ_3(\F_2)$. Of course, we have just seen that $\calJ_3(\F_2)$ is primitive, whence
it only remains to classify its primitive subspaces with dimension $2$, $3$ or $4$
(obviously, a subspace of $\Mat_3(\F_2)$ with upper rank $2$ and dimension at most $1$ is non-reduced).
This is given in the next three propositions:

\begin{prop}\label{J3dim2}
Let $\calV$ be a primitive subspace of $\calJ_3(\F_2)$ with dimension $2$. Then,
$\calV$ is equivalent to the space associated with the generic matrix
$$\begin{bmatrix}
\mathbf{a} & 0 & 0 \\
0 & \mathbf{a}+\mathbf{b} & 0 \\
0 & 0 & \mathbf{b}
\end{bmatrix}.$$
\end{prop}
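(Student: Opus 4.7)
The approach is to invoke Proposition \ref{J3structure} to extract strong structural information about $\calV$, then use carefully chosen invertible row and column changes of basis to bring $\calV$ to the target form.

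First, Proposition \ref{J3structure} implies that the linear map $d : \calV \to \calH:=\{(a,b,c)\in (\F_2)^3 : a+b+c=0\}$ sending $M$ to its diagonal vector is surjective. Since $\dim \calV=2=\dim \calH$, the map $d$ is an isomorphism. Let $(M_1,M_2)$ be the unique basis of $\calV$ with $d(M_1)=(1,1,0)$ and $d(M_2)=(0,1,1)$; then $d(M_1+M_2)=(1,0,1)$. Every non-zero matrix of $\calV$ is thus an upper-triangular $3\times 3$ matrix of trace zero with non-zero diagonal, hence of rank exactly $2$.

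Next, I compute the three kernels. Since $M_2$ is upper-triangular with first column $0$ and the remaining two columns independent, $\Ker M_2=\F_2 e_1$. Since $M_1+M_2$ is upper-triangular with $(2,2)$-entry zero and rank $2$, its kernel has the form $\F_2(xe_1+e_2)$ for a unique $x\in \F_2$. Since $M_1$ has last row zero and rank $2$, its kernel has the form $\F_2(ye_1+ze_2+e_3)$ for unique $y,z\in\F_2$. Packaging these three kernel generators as successive columns yields the upper-triangular invertible matrix
$$Q:=\begin{bmatrix} 1 & x & y \\ 0 & 1 & z \\ 0 & 0 & 1 \end{bmatrix} \in \GL_3(\F_2).$$
A short calculation using $M_1 e_1=e_1$, $M_2 e_1=0$, and the identity $M_1(xe_1+e_2)=M_2(xe_1+e_2)$ (forced by $xe_1+e_2 \in \Ker(M_1+M_2)$) then shows that
$$M_1 Q=\begin{bmatrix} 1 & u & 0 \\ 0 & 1 & 0 \\ 0 & 0 & 0 \end{bmatrix}, \qquad M_2 Q=\begin{bmatrix} 0 & u & v \\ 0 & 1 & w \\ 0 & 0 & 1 \end{bmatrix}$$
for some $u,v,w\in\F_2$. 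The crucial feature is that the $(1,2)$-entries of $M_1 Q$ and $M_2 Q$ coincide, because the second columns of $M_1 Q$ and $M_2 Q$ are both equal to $M_1(xe_1+e_2)=M_2(xe_1+e_2)$.

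Finally, set
$$P:=\begin{bmatrix} 1 & u & v+uw \\ 0 & 1 & w \\ 0 & 0 & 1 \end{bmatrix} \in \GL_3(\F_2).$$
A direct row reduction yields $P M_1 Q=\begin{bmatrix} 1 & 0 & 0 \\ 0 & 1 & 0 \\ 0 & 0 & 0 \end{bmatrix}$ and $P M_2 Q=\begin{bmatrix} 0 & 0 & 0 \\ 0 & 1 & 0 \\ 0 & 0 & 1 \end{bmatrix}$, so $P\calV Q$ is the span of these two diagonal matrices, which is precisely the stated target space. The only subtle point in this plan is the equality of the $(1,2)$-entries of $M_1 Q$ and $M_2 Q$: without it, no single left-multiplier $P$ could simultaneously normalize both basis elements. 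This coincidence is built into the construction, thanks to the choice of the middle column of $Q$ as a generator of $\Ker(M_1+M_2)$.
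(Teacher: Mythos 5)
Your proof is correct: I checked the kernel computations (with $M_1,M_2$ the basis elements having diagonals $(1,1,0)$ and $(0,1,1)$, one indeed gets $\Ker M_2=\F_2 e_1$, $\Ker(M_1+M_2)=\F_2(xe_1+e_2)$ and $\Ker M_1=\F_2(ye_1+ze_2+e_3)$), the coincidence of the second columns of $M_1Q$ and $M_2Q$ is exactly as you say, and your matrix $P$ does reduce both $M_1Q$ and $M_2Q$ to the desired diagonal matrices over $\F_2$. Note that the paper does not actually print a proof of this proposition; it declares it "similar" to the proof of Proposition~\ref{J3dim3}, whose method is to write a generic matrix
$\begin{bmatrix}
\mathbf{a} & \alpha \mathbf{a}+\beta\mathbf{b} & \gamma\mathbf{a}+\delta\mathbf{b} \\
0 & \mathbf{a}+\mathbf{b} & \epsilon\mathbf{a}+\zeta\mathbf{b} \\
0 & 0 & \mathbf{b}
\end{bmatrix}$
(using Proposition~\ref{J3structure} to see that the diagonal map is bijective, as you do) and then to kill the six parameters one at a time by explicit elementary row and column operations. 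Your route reaches the same endpoint but organizes the normalization conceptually: the three kernels of the non-zero elements of $\calV$ form a complete flag, the unitriangular $Q$ adapts the source basis to that flag, and a single left multiplier $P$ then finishes the job, with the key compatibility (equal $(1,2)$-entries) guaranteed structurally rather than checked entry by entry. The paper's version is more mechanical but scales uniformly to the higher-dimensional cases (Propositions~\ref{J3dim3} and~\ref{J3dim4}), where the kernel-flag trick is less clean because some elements have rank $1$ or share kernels; yours is arguably more illuminating for the $2$-dimensional case, where it also makes the uniqueness of the normal form transparent. Two tiny presentational points: you should say a word on why a primitive subspace of $\calJ_3(\F_2)$ necessarily has upper rank $2$ (so that Proposition~\ref{J3structure} applies), and you could cite Remark~\ref{rank1J3remark} for the fact that a matrix of $\calJ_3(\F_2)$ with non-zero diagonal has rank $2$.
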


\begin{prop}\label{J3dim3}
Let $\calV$ be a primitive subspace of $\calJ_3(\F_2)$ with dimension $3$. Then,
$\calV$ is equivalent to one and only one of the four spaces associated with the generic matrices
$$\mathbf{M}_1:=\begin{bmatrix}
\mathbf{a} & 0 & \mathbf{c}  \\
0 & \mathbf{a}+\mathbf{b} & 0 \\
0 & 0 & \mathbf{b}
\end{bmatrix}, \quad
\mathbf{M}_2:=\begin{bmatrix}
\mathbf{a} & \mathbf{c} & 0 \\
0 & \mathbf{a}+\mathbf{b} & \mathbf{a} \\
0 & 0 & \mathbf{b}
\end{bmatrix},$$
$$\mathbf{M}_3:=\begin{bmatrix}
\mathbf{a} & \mathbf{b} & 0 \\
0 & \mathbf{a}+\mathbf{b} & \mathbf{c} \\
0 & 0 & \mathbf{b}
\end{bmatrix} \quad \text{and} \quad
\mathbf{M}_4:=\begin{bmatrix}
\mathbf{a} & \mathbf{c} & 0 \\
0 & \mathbf{a}+\mathbf{b} & \mathbf{c} \\
0 & 0 & \mathbf{b}
\end{bmatrix}.$$
\end{prop}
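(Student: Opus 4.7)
By Proposition~\ref{J3structure}, the primitivity of $\calV \subset \calJ_3(\F_2)$ is equivalent to the diagonal projection $\pi : \calV \to (\F_2)^3$ being onto the trace-zero plane $\calH = \{(a, a+b, b) : a, b \in \F_2\}$. With $\dim \calV = 3$ and $\dim \calH = 2$, the kernel $\calV \cap \NT_3(\F_2)$ is a line $\F_2 N$ for some non-zero $N$. Choosing matrices $M_a, M_b \in \calV$ with respective diagonals $(1,1,0)$ and $(0,1,1)$, one writes $\calV = \Vect(N, M_a, M_b)$ with $M_a, M_b$ unique modulo $\F_2 N$; the classification then amounts to a joint normalization of the triple $(N, M_a, M_b)$.

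The first step is to normalize $N$. I would begin by showing that the equivalences $(P, Q)$ for which both $\calV$ and $P\calV Q$ lie in $\calJ_3(\F_2)$ are exactly the pairs of unit upper-triangular matrices -- this follows by noting that $e_1$ is the unique (up to scalar) vector with $\dim \calJ_3(\F_2)\,e_1 = 1$, and by running the symmetric argument on the row side for $e_3$. A short computation of $P N Q$ for such $(P,Q)$ then shows that the $(1,2)$ and $(2,3)$ entries of $N$ are invariants, whereas the $(1,3)$ entry is freely adjustable as soon as one of those two is non-zero. Consequently the non-zero elements of $\NT_3(\F_2)$ split into exactly four orbits, with canonical representatives $E_{13}$, $E_{12}$, $E_{23}$, and $E_{12}+E_{23}$ -- matching the four cases of the proposition.

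For each canonical $N$, I would then use the stabilizer of $\F_2 N$ inside the equivalence group, combined with the freedom $M_a \leftarrow M_a + \lambda N$ and $M_b \leftarrow M_b + \mu N$, to normalize the remaining strictly upper-triangular entries of $M_a$ and $M_b$. Each case amounts to a cascade of elementary row and column operations analogous to those carried out throughout Section~\ref{proofmainclassification}, and ends with $\calV$ matching the prescribed generic matrix $\mathbf{M}_i$. I expect this case-by-case reduction to be the main source of tedium, though no individual step is difficult.

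Finally, to establish pairwise inequivalence I would rely on two easy invariants. First, $\mathbf{M}_4$ is a rank-$2$ space -- a direct inspection of its seven non-zero elements shows they all have rank $2$ -- whereas $\mathbf{M}_1$, $\mathbf{M}_2$, and $\mathbf{M}_3$ each contain a non-zero rank-$1$ matrix (namely $E_{13}$, $E_{12}$, and $E_{23}$, respectively), so $\mathbf{M}_4$ is inequivalent to the others. To separate the remaining three, let $E$ denote the (essentially unique) non-zero rank-$1$ matrix of $\calV$, and set $D := \im E$ and $D' := (\Ker E)^\bot$. The pair
$$\bigl(\dim \{x \in (\F_2)^3 : \calV x \subset D\},\ \dim \{y \in (\F_2)^3 : y^T \calV \subset D'\}\bigr)$$
is an invariant of the equivalence class of $\calV$, and a direct calculation yields $(1,1)$ for $\mathbf{M}_1$, $(1,0)$ for $\mathbf{M}_2$, and $(0,1)$ for $\mathbf{M}_3$, completing the argument.
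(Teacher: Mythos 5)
Your reduction has a genuine gap at its first step. The claim that the pairs $(P,Q)$ for which both $\calV$ and $P\calV Q$ lie in $\calJ_3(\F_2)$ are exactly the unit upper-triangular pairs is false: your justification (uniqueness of $e_1$ with $\dim \calJ_3(\F_2)\,e_1=1$, and of $e_3$ on the row side) pins down the stabilizer of the full five-dimensional space $\calJ_3(\F_2)$, but it does not transfer to a three-dimensional subspace $\calV$, which may admit several vectors $x$ with $\dim \calV x=1$ (the paper's own table records two such vectors for $\mathbf{M}_1$ and $\mathbf{M}_3$). Concretely, the space $\calV$ with generic matrix $\begin{bmatrix} \mathbf{a} & \mathbf{c} & 0 \\ 0 & \mathbf{a}+\mathbf{b} & 0 \\ 0 & 0 & \mathbf{b}\end{bmatrix}$ is primitive, its unique non-zero zero-diagonal element is $N=E_{12}$, yet conjugating by the permutation matrix swapping the indices $2$ and $3$ carries it into $\calJ_3(\F_2)$, namely onto $\begin{bmatrix} \mathbf{a} & 0 & \mathbf{c} \\ 0 & \mathbf{b} & 0 \\ 0 & 0 & \mathbf{a}+\mathbf{b}\end{bmatrix}$, which is the space of $\mathbf{M}_1$, whose $N$ is $E_{13}$. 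Hence the unit-upper-triangular orbit of $N$ is not an invariant of the equivalence class of $\calV$, and your four cases do not correspond bijectively to the four normal forms. In the cases $N=E_{12}$ and $N=E_{23}$, the upper-triangular normalization leaves a residual binary parameter (the coefficient $\alpha$ in an entry of the form $\alpha\mathbf{a}$ at the $(2,3)$- or $(1,2)$-spot), and for $\alpha=0$ the resulting space is equivalent to $\mathbf{M}_1$, not to $\mathbf{M}_2$ or $\mathbf{M}_3$ -- an identification your restricted group of operations cannot see. This is exactly the subtlety the paper's Cases 2 and 3 handle explicitly.

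The rest of your outline is sound, and in places a genuine alternative to the paper. The identification of $\calV \cap \NT_3(\F_2)$ as a line, and the computation $PNQ=N+(p_{12}n_{23}+n_{12}q_{23})E_{13}$ giving the four representatives $E_{13}$, $E_{12}$, $E_{23}$, $E_{12}+E_{23}$, are correct and legitimately organize the case division (just not as an equivalence invariant). Your inequivalence argument is valid and independent of the flawed lemma: $\mathbf{M}_4$ is indeed a rank-$2$ space while the other three each contain a unique non-zero rank-$1$ matrix, and your pair of dimensions does evaluate to $(1,1)$, $(1,0)$, $(0,1)$ on $\mathbf{M}_1,\mathbf{M}_2,\mathbf{M}_3$; this replaces the paper's counting of vectors $x$ with $\dim(\calV x)=1$ and $\dim(\calV^T x)=1$. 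To repair the proof, keep the four-way division on $N$ but, in the cases $N=E_{12}$ and $N=E_{23}$, add the subcase in which the space collapses onto $\mathbf{M}_1$ after a permutation of indices.
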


\begin{prop}\label{J3dim4}
Let $\calV$ be a primitive subspace of $\calJ_3(\F_2)$ with dimension $4$. Then,
$\calV$ is equivalent to one and only one of the four spaces associated with the generic matrices
$$\mathbf{N}_1:=\begin{bmatrix}
\mathbf{a} & \mathbf{c} & 0 \\
0 & \mathbf{a}+\mathbf{b} & \mathbf{d} \\
0 & 0 & \mathbf{b}
\end{bmatrix}, \quad
\mathbf{N}_2:=\begin{bmatrix}
\mathbf{a} & \mathbf{c} & \mathbf{d} \\
0 & \mathbf{a}+\mathbf{b} & 0 \\
0 & 0 & \mathbf{b}
\end{bmatrix},$$
$$\mathbf{N}_3:=\begin{bmatrix}
\mathbf{a} & 0 & \mathbf{c} \\
0 & \mathbf{a}+\mathbf{b} & \mathbf{d} \\
0 & 0 & \mathbf{b}
\end{bmatrix} \quad \text{and} \quad
\mathbf{N}_4:=\begin{bmatrix}
\mathbf{a} & \mathbf{c} & \mathbf{d} \\
0 & \mathbf{a}+\mathbf{b} & \mathbf{c} \\
0 & 0 & \mathbf{b}
\end{bmatrix}.$$
\end{prop}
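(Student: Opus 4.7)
The plan is to exploit that $\dim \calJ_3(\F_2) = 5$, so that $\calV$ is a hyperplane of $\calJ_3(\F_2)$, cut out by a single non-trivial linear equation
$$f(M) := \alpha a + \beta b + \gamma x + \delta y + \epsilon z = 0$$
on the generic matrix $M = \begin{bmatrix} a & x & y \\ 0 & a+b & z \\ 0 & 0 & b \end{bmatrix}$ of $\calJ_3(\F_2)$. I would first apply Proposition \ref{J3structure} to see that $\calV$ is primitive if and only if $(\gamma, \delta, \epsilon) \neq (0,0,0)$: indeed, when those coefficients all vanish we have $\NT_3(\F_2) \subset \calV$ and the diagonal image of $\calV$ has dimension at most one; conversely, when $(\gamma, \delta, \epsilon) \neq (0,0,0)$, the equation $f(M) = 0$ is solvable in the strictly upper-triangular entries for any prescribed pair $(a,b) \in (\F_2)^2$, so the diagonal image of $\calV$ equals the full $2$-dimensional subspace $\{(a,b,c) \in (\F_2)^3 : a+b+c = 0\}$ and Proposition \ref{J3structure} applies. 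This leaves exactly $28$ primitive hyperplanes to sort out.

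For the main reduction, I would restrict to equivalences $\calV \mapsto P\calV Q$ where both $P$ and $Q$ are invertible upper-triangular, hence unipotent upper-triangular over $\F_2$; such equivalences preserve $\calJ_3(\F_2)$ and form a group of order $64$. Writing $P = I + uE_{1,2} + vE_{1,3} + wE_{2,3}$ and $Q = I + u'E_{1,2} + v'E_{1,3} + w'E_{2,3}$, a direct expansion of $PMQ$ shows that the induced action on the coefficient tuple $(\alpha, \beta, \gamma, \delta, \epsilon)$ preserves $\delta$, while $\gamma$ and $\epsilon$ may each be translated by any multiple of $\delta$, and $\alpha, \beta$ may each be translated by any $\F_2$-linear combination of $\gamma, \delta, \epsilon$. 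Consequently, if $\delta = 1$ one may successively clear $\gamma, \epsilon$ and then $\alpha, \beta$, reaching the canonical equation $f = y$, whose kernel is $\mathbf{N}_1$. If $\delta = 0$, then $(\gamma, \epsilon)$ is a complete invariant for this action, and its three nonzero values $(1,0)$, $(0,1)$, $(1,1)$ lead, after clearing $\alpha, \beta$, to the canonical equations $f = x$, $f = z$, $f = x+z$, defining $\mathbf{N}_3$, $\mathbf{N}_2$ and $\mathbf{N}_4$ respectively.

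To conclude, I need the four spaces $\mathbf{N}_1, \dots, \mathbf{N}_4$ to be pairwise inequivalent under the full group $\GL_3(\F_2) \times \GL_3(\F_2)$, which is a priori coarser than the action used above. For this I would use the invariant consisting of the multiset
$$\bigl\{\dim \calV x : x \in (\F_2)^3 \setminus \{0\}\bigr\}.$$
A direct calculation on each generic matrix yields $\{1,2,2,2,3,3,3\}$ for $\mathbf{N}_1$, $\{1,2,2,2,2,3,3\}$ for $\mathbf{N}_2$, $\{1,1,2,3,3,3,3\}$ for $\mathbf{N}_3$, and $\{1,2,2,3,3,3,3\}$ for $\mathbf{N}_4$, which are pairwise distinct. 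The main obstacle is the bookkeeping in expanding $PMQ$ and identifying the resulting action on the five coefficient parameters; once that is in hand, the case split on $\delta$ and the final invariant check are routine.
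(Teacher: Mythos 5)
Your proposal is correct. The normalization part is sound: a $4$-dimensional subspace of the $5$-dimensional space $\calJ_3(\F_2)$ is indeed a hyperplane, your primitivity criterion $(\gamma,\delta,\epsilon)\neq(0,0,0)$ is exactly what Proposition \ref{J3structure} gives, and the induced action of the unipotent upper-triangular pairs $(P,Q)$ on the coefficient vector is as you describe (one computes $\gamma'=\gamma+\delta w'$, $\epsilon'=\epsilon+\delta u$, $\delta'=\delta$, with enough residual freedom in $u',v',v,w$ to clear $\alpha$ and $\beta$ in every case), so the orbit representatives $y$, $x$, $z$, $x+z$ do yield $\mathbf{N}_1,\mathbf{N}_3,\mathbf{N}_2,\mathbf{N}_4$; I also checked your four multisets of $\dim\calV x$ and they are as stated and pairwise distinct. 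The route is genuinely different from the paper's, though. The paper leaves this proof to the reader as ``similar'' to that of Proposition \ref{J3dim3}, whose method is primal: one locates the subspace $\calV\cap\NT_3(\F_2)$ of diagonal-zero matrices (here a $2$-dimensional plane rather than a line), splits into cases according to its position, and normalizes each case separately by triangular row and column operations. Your dual formulation --- computing once and for all the action of the same normalizing group on the five coefficients of the defining functional --- replaces that case-by-case analysis with a single orbit computation, and as a bonus exhibits the orbit sizes ($16+4+4+4=28$). For the inequivalence, the paper instead counts the rank-$1$ matrices in each space and examines whether their ranges coincide; your invariant (the multiset of $\dim\calV x$ over non-zero $x$) is equally legitimate and in the same spirit as the table the paper uses for the $3$-dimensional case. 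The only caveat worth recording is the one you already flag: the reduction is carried out under a subgroup of the full equivalence group, which proves existence a fortiori, while uniqueness must be (and is) checked against the full group via the invariant.
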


\begin{Rem}\label{rank1J3remark}
In the prospect of the proofs of Propositions \ref{J3dim3} and \ref{J3dim4}, the following remark will be useful:
the set of all matrices with rank at most $1$ in $\calJ_3(\F_2)$ is the union of the $2$-dimensional subspaces
\[P_1:=\Biggl\{\begin{bmatrix}
0 & a & b \\
0 & 0 & 0 \\
0 & 0 & 0
\end{bmatrix}\mid (a,b)\in (\F_2)^2\Biggr\}\quad \text{and} \quad
P_2:=\Biggl\{\begin{bmatrix}
0 & 0 & b \\
0 & 0 & a \\
0 & 0 & 0
\end{bmatrix}\mid (a,b)\in (\F_2)^2\Biggr\}.\]
Indeed, if a matrix $M\in \calJ_3(\F_2)$ has its diagonal non-zero, then exactly two of its diagonal entries equal $1$, whence it has rank $2$.
Thus, a rank $1$ matrix of $\calJ_3(\F_2)$ must have its diagonal zero: From there, the claimed result is obvious.
\end{Rem}

\begin{proof}[Proof of Proposition \ref{J3dim3}]
Using Proposition \ref{J3structure} together with the rank theorem, we see that $\calV$ contains exactly one non-zero matrix $M_0$ with diagonal zero.
We split the discussion into four cases, according to the value of $M_0$.

\noindent \textbf{Case 1.} $M_0=\begin{bmatrix}
0 & 0 & 1 \\
0 & 0 & 0 \\
0 & 0 & 0
\end{bmatrix}$. \\
Then, we find scalars $\alpha,\beta,\gamma,\delta$ such that a generic matrix of $\calV$ is
\[\begin{bmatrix}
\mathbf{a} & \alpha \mathbf{a}+\beta \mathbf{b} & \mathbf{c} \\
0 & \mathbf{a}+\mathbf{b} & \gamma \mathbf{a}+\delta \mathbf{b} \\
0 & 0 & \mathbf{b}
\end{bmatrix}.\]
Performing the operations $C_3 \leftarrow C_3+\gamma C_2$, $L_2 \leftarrow L_2+(\delta+\gamma)L_3$,
$L_1 \leftarrow L_1+\beta L_2$ and $C_2 \leftarrow C_2+(\alpha+\beta) C_1$, we reduce the situation to the one where
$\alpha=\beta=\gamma=\delta=0$, and hence $\calV$ is equivalent to the space associated with $\mathbf{M}_1$.

\noindent \textbf{Case 2.} $M_0=\begin{bmatrix}
0 & 1 & ? \\
0 & 0 & 0 \\
0 & 0 & 0
\end{bmatrix}$. \\
Then, by performing the column operation $C_3 \leftarrow C_3+C_2$ if necessary, we see that no generality is lost in assuming that
$M_0=\begin{bmatrix}
0 & 1 & 0 \\
0 & 0 & 0 \\
0 & 0 & 0
\end{bmatrix}$, whence we find scalars $\alpha,\beta,\gamma,\delta$ such that a generic matrix of $\calV$ is
\[\begin{bmatrix}
\mathbf{a} & \mathbf{c} & \gamma \mathbf{a}+\delta \mathbf{b} \\
0 & \mathbf{a}+\mathbf{b} & \alpha \mathbf{a}+\beta \mathbf{b} \\
0 & 0 & \mathbf{b}
\end{bmatrix}.\]
Using the operations $C_3 \leftarrow C_3+\gamma C_1$, $L_1 \leftarrow L_1+\delta L_3$ and $L_2 \leftarrow L_2+\beta L_3$,
we reduce the situation to the one where $\gamma=\delta=\beta=0$.
If $\alpha=1$, then $\calV$ is equivalent to the matrix space associated with $\mathbf{M}_2$.
If $\alpha=0$  then permuting rows and columns shows that $\calV$ is equivalent to the matrix space associated with $\mathbf{M}_1$.

\noindent \textbf{Case 3.} $M_0=\begin{bmatrix}
0 & 0 & ? \\
0 & 0 & 1 \\
0 & 0 & 0
\end{bmatrix}$. \\
With a similar line of reasoning as in Case 2, one finds that $\calV$ is equivalent to the matrix space associated with $\mathbf{M}_3$
or to the one associated with $\begin{bmatrix}
\mathbf{a} & 0 & 0 \\
0 & \mathbf{a}+\mathbf{b} & \mathbf{c} \\
0 & 0 & \mathbf{b}
\end{bmatrix}$, which is easily seen to be equivalent to the one associated with $\mathbf{M}_1$.

\noindent \textbf{Case 4.} $M_0=\begin{bmatrix}
0 & 1 & ? \\
0 & 0 & 1 \\
0 & 0 & 0
\end{bmatrix}$. \\
Using $C_3 \leftarrow C_3+C_2$ if necessary, we see that no generality is lost in assuming that
$M_0=\begin{bmatrix}
0 & 1 & 0 \\
0 & 0 & 1 \\
0 & 0 & 0
\end{bmatrix}$. Then, we have scalars $\alpha,\beta,\gamma,\delta,\eta,\epsilon$ such that $\calV$ is associated with the generic matrix
\[\begin{bmatrix}
\mathbf{a} & \alpha \mathbf{a}+\beta \mathbf{b}+\mathbf{c} & \eta \mathbf{a}+\epsilon \mathbf{b} \\
0 & \mathbf{a}+\mathbf{b} & \gamma \mathbf{a}+\delta \mathbf{b}+\mathbf{c} \\
0 & 0 & \mathbf{b}
\end{bmatrix}.\]
Using the operations $C_2 \leftarrow C_2+(\alpha+\gamma) C_1$, $L_2 \leftarrow L_2+(\beta+\delta) L_3$,
$L_1 \leftarrow L_1+\epsilon L_3$ and
$C_3 \leftarrow C_3+\eta C_1$, we finally reduce the situation to the one where $\calV$ is associated with the generic matrix $\mathbf{M}_4$.

It remains to show that the four cited matrix spaces are pairwise inequivalent.
To do this, we note that the equivalence class of a matrix subspace $\calW$ of $\Mat_3(\F_2)$ determines
both the number of vectors $x \in (\F_2)^3$ for which $\dim (\calW x)=1$ and the number of vectors
$x \in (\F_2)^3$ for which $\dim (\calW^T x)=1$. For the above four matrix spaces, we obtain the following results, which
show that they are pairwise inequivalent:

\begin{center}
\begin{tabular}{| c || c | c | c | c |}
\hline
$\calV$ associated with the generic matrix \ldots & $\mathbf{M}_1$ & $\mathbf{M}_2$ & $\mathbf{M}_3$ & $\mathbf{M}_4$ \\
\hline
\hline
Number of vectors $x \in (\F_2)^3$ such that $\dim (\calV x)=1$ & $2$ & $1$ & $2$ & $1$ \\
\hline
Number of vectors $x \in (\F_2)^3$ such that $\dim (\calV^T x)=1$ & $2$ & $2$ & $1$ & $1$ \\
\hline
\end{tabular}
\end{center}
\end{proof}

The proofs of Propositions \ref{J3dim2} and \ref{J3dim4} are similar and we shall leave them to the reader.
Let us only explain why the four generic matrices given in Proposition \ref{J3dim4} yield pairwise inequivalent
matrix spaces. We simply look at the structure of the sets of their rank $1$ matrices.
\begin{itemize}
\item If $\calV$ is equivalent to the space associated with $\mathbf{N}_1$, then it contains exactly two rank $1$ matrices.
\item If $\calV$ is equivalent to the space associated with $\mathbf{N}_2$, then it contains exactly three rank $1$ matrices,
and they have the same range.
\item If $\calV$ is equivalent to the space associated with $\mathbf{N}_3$, then it contains exactly three rank $1$ matrices,
and they do not have the same range.
\item Otherwise, $\calV$ contains a sole rank $1$ matrix.
\end{itemize}

\section{Applications}

\subsection{Triples of locally linearly dependent operators over $\F_2$}\label{LLDsection}

In \cite[Section 3]{dSPLLD2}, we have shown how minimal reduced locally linearly dependent operator spaces are connected to semi-primitive operator spaces.
Let us recall the basics: Let $U$ and $V$ be finite-dimensional vector spaces, and $\calS$ be a reduced linear subspace of $\calL(U,V)$.
We define the \textbf{dual operator space} $\widehat{\calS}$ of $\calS$ as the space of all operators from $\calS$ to $V$ of the form
$$\widehat{x} : f \in \calS \mapsto f(x), \quad \text{with $x \in U$.}$$
We say that $\calS$ is locally linearly dependent (in abbreviated form: LLD) when, for every vector $x \in U$, there is a non-zero operator $s \in \calS$ such that $s(x)=0$.
Then, $\calS$ is a minimal LLD space if and only if $\widehat{\calS}$ is semi-primitive.
Moreover, two reduced operator spaces $\calS$ and $\calT$ are equivalent if and only if their dual operator spaces are equivalent.
Noting that $\calS$ is always equivalent to $\widehat{\widehat{\calS}}$, this yields a one-to-one correspondence between
the equivalence classes of semi-primitive operator spaces and the ones of minimal reduced LLD spaces.

We have seen that the semi-primitive subspaces of $\calL(U,V)$ with upper rank $2$
are the primitive ones for which $\dim V>2$. Thus, we deduce the following result from Theorem \ref{classrank2F2}
and from Propositions \ref{J3dim2}, \ref{J3dim3} and \ref{J3dim4}.

\begin{theo}[Classification of $3$-dimensional minimal LLD spaces over $\F_2$]
Let $\calS \subset \calL(U,V)$ be a $3$-dimensional minimal reduced LLD space over $\F_2$.
Then, one and only one of the following situations holds:
\begin{enumerate}[(a)]
\item $\dim V=2$;
\item $\dim U=2$, $\dim V=3$ and $\calS$ is represented by the matrix space associated with
$$\begin{bmatrix}
\mathbf{x} & 0 \\
\mathbf{y} & \mathbf{y} \\
0 & \mathbf{z}
\end{bmatrix}.$$
\item $\dim U=3$, $\dim V=3$, and one and only one of the following generic matrices is associated with a matrix space that represents $\calS$:
$$\begin{bmatrix}
0 & -\mathbf{x} & -\mathbf{y} \\
\mathbf{x} & 0 & -\mathbf{z} \\
\mathbf{y} & \mathbf{z} & 0
\end{bmatrix}, \quad \begin{bmatrix}
0 & \mathbf{x} & \mathbf{x}+\mathbf{z} \\
\mathbf{x} & 0 & \mathbf{y} \\
\mathbf{x}+\mathbf{y} &  \mathbf{z}  & 0
\end{bmatrix},$$
$$\begin{bmatrix}
\mathbf{x} & 0 & \mathbf{z} \\
\mathbf{y} & \mathbf{y} & 0 \\
0 & \mathbf{z} & 0
\end{bmatrix},
\quad \begin{bmatrix}
\mathbf{x} & 0 & \mathbf{y} \\
\mathbf{y}+\mathbf{z} & \mathbf{y} & 0 \\
0 & \mathbf{z} & 0
\end{bmatrix}, \quad \begin{bmatrix}
\mathbf{x} & \mathbf{y} & 0 \\
\mathbf{y} & \mathbf{y} & \mathbf{z} \\
0 & \mathbf{z} & 0
\end{bmatrix}, \quad
\begin{bmatrix}
\mathbf{x} & 0 & \mathbf{y} \\
\mathbf{y} & \mathbf{y} & \mathbf{z} \\
0 & \mathbf{z} & 0
\end{bmatrix}.$$

\item $\dim U=4$, $\dim V=3$, and one and only one of the following generic matrices is associated with a matrix space that represents $\calS$:
$$\begin{bmatrix}
\mathbf{x} & 0 & \mathbf{y} &  0 \\
\mathbf{y} & \mathbf{y} & 0  & \mathbf{z} \\
0 & \mathbf{z} & 0  & 0
\end{bmatrix}, \quad
\begin{bmatrix}
\mathbf{x} & 0 & \mathbf{y} & \mathbf{z} \\
\mathbf{y} & \mathbf{y} & 0  & 0 \\
0 & \mathbf{z} & 0  & 0
\end{bmatrix}, \quad
\begin{bmatrix}
\mathbf{x} & 0 & \mathbf{z} & 0 \\
\mathbf{y} & \mathbf{y} & 0  & \mathbf{z} \\
0 & \mathbf{z} & 0  & 0
\end{bmatrix}, \quad
\begin{bmatrix}
\mathbf{x} & 0 & \mathbf{y} & \mathbf{z} \\
\mathbf{y} & \mathbf{y} & \mathbf{z}  & 0 \\
0 & \mathbf{z} & 0  & 0
\end{bmatrix}$$
and
$$\begin{bmatrix}
\mathbf{y} & 0 & \mathbf{z} & \mathbf{z} \\
0 & \mathbf{z} & \mathbf{x} & 0 \\
\mathbf{x} & \mathbf{x} & 0 & \mathbf{y}
\end{bmatrix}.$$

\item $\dim U=5$, $\dim V=3$, and $\calS$ is represented by the matrix space associated with the generic matrix
$$\begin{bmatrix}
\mathbf{x} & 0 & \mathbf{y} & \mathbf{z} & 0 \\
\mathbf{y} & \mathbf{y} & 0 & 0 & \mathbf{z} \\
0 & \mathbf{z} & 0 & 0 & 0
\end{bmatrix}.$$

\end{enumerate}
Conversely, all the above cited matrix spaces represent $3$-dimensional minimal LLD operator spaces.
\end{theo}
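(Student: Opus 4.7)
The approach is to apply the bijection $\calS \mapsto \widehat{\calS}$ between equivalence classes of $3$-dimensional minimal reduced LLD spaces and equivalence classes of semi-primitive operator spaces with $3$-dimensional source (recalled just above the statement), and then to invoke the classification of semi-primitive spaces of upper rank at most $2$ developed in the preceding sections.

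First, one bounds the upper rank $r$ of $\widehat{\calS}$. Since $\calS$ is LLD, each evaluation $\widehat{x}: f \mapsto f(x)$ has a non-trivial kernel in $\calS$, so $r \leq \dim \calS - 1 = 2$, and $r \geq 1$ because $\calS \neq \{0\}$. Viewing $\widehat{\calS}$ as a subspace of $\Mat_{\dim V, 3}(\F_2)$, Proposition \ref{basicsemiprimitiveprop} applies: its source dimension is $3 > 2$, so a reduced upper-rank-$2$ space is semi-primitive if and only if it is not equivalent to a subspace of $\calR(1,1)$, and is primitive if and only if, in addition, $\dim V > 2$. The sub-case $\dim V = 2$ is exactly case (a) of the statement; no further breakdown is required there. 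The sub-case $\dim V \geq 3$ triggers Theorem \ref{classrank2F2}, which forces $\dim V = 3$ and identifies $\widehat{\calS}$, up to equivalence, with either a primitive subspace of $\calJ_3(\F_2)$, or $\Mata_3(\F_2)$, or $\calU_3(\F_2)$, or $\calV_3(\F_2)$.

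The next step is to enumerate the semi-primitive candidates by $\dim U = \dim \widehat{\calS}$. Propositions \ref{J3dim2}, \ref{J3dim3}, and \ref{J3dim4} produce, respectively, $1$, $4$, $4$ equivalence classes of primitive subspaces of $\calJ_3(\F_2)$ of dimensions $2$, $3$, $4$, and $\calJ_3(\F_2)$ itself is the lone one in dimension $5$. Adding $\Mata_3(\F_2)$ and $\calU_3(\F_2)$ at dimension $3$ and $\calV_3(\F_2)$ at dimension $4$, one obtains $1 + 6 + 5 + 1$ equivalence classes for $\dim U \in \{2,3,4,5\}$, matching the precise counts of generic matrices in cases (b), (c), (d), (e).

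Finally, one dualizes back. If $\widehat{\calS}$ is presented by a generic matrix $\mathbf{x}_1 A_1 + \cdots + \mathbf{x}_d A_d$, with $A_j \in \Mat_{\dim V, \dim \calS}(\F_2)$ and $d = \dim U$, then $\calS$ is presented by $\mathbf{y}_1 B_1 + \cdots + \mathbf{y}_{\dim \calS} B_{\dim \calS}$, where $(B_k)_{i,j} = (A_j)_{i,k}$ --- that is, the $j$-th column of $B_k$ is the $k$-th column of $A_j$. Applying this transposition of the ``indeterminate versus column'' indices to each of the thirteen semi-primitive models, and then performing a small amount of normalisation by equivalence, reproduces the thirteen generic matrices listed under cases (b)--(e). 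Pairwise inequivalence of the resulting spaces $\calS$ is automatic from the duality together with the inequivalences already established for the semi-primitive representatives, and the converse --- that each listed space represents a minimal reduced LLD operator space --- follows by the same correspondence applied in reverse. The main obstacle is purely the mechanical execution of the thirteen explicit dualizations and subsequent normalizations, which is tedious but presents no conceptual difficulty.
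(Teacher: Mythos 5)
Your proposal is correct and follows essentially the same route as the paper: dualize via $\calS\mapsto\widehat{\calS}$, observe that the semi-primitive spaces of upper rank $2$ with three columns are exactly those of Proposition \ref{basicsemiprimitiveprop} (namely $\dim V=2$ or primitive), invoke Theorem \ref{classrank2F2} together with Propositions \ref{J3dim2}, \ref{J3dim3} and \ref{J3dim4} to enumerate the thirteen equivalence classes by $\dim U$, and compute the dual generic matrices as in Lemma \ref{U3trans}. The paper's own proof consists of precisely these citations plus the remark identifying which listed generic matrix is the dual of which semi-primitive model, so your write-up is, if anything, slightly more explicit about the rank bound and the column/indeterminate transposition.
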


In (c), the given matrix spaces represent the dual operator spaces of the matrix spaces
$\Mata_3(\F_2)$, $\calU_3(\F_2)$, and the four matrix spaces cited in Proposition \ref{J3dim3}.
In (d), the given matrix spaces represent the dual operator spaces of the four matrix spaces cited in Proposition \ref{J3dim4}
and of $\calV_3(\F_2)$. In (e), the matrix space represents the dual operator space of $\calJ_3(\F_2)$.

The computation of the dual operator spaces is performed in the same way as in the proof of Lemma \ref{U3trans}.

\subsection{Subspaces of $\Mat_3(\F_2)$ with trivial spectrum}\label{trivialspectrumsection}

\begin{Def}
Given a field $\K$, a linear subspace $\calV$ of $\Mat_n(\K)$ is said to have a \textbf{trivial spectrum} when
no matrix of $\calV$ has an eigenvalue in $\K \setminus \{0\}$.
\end{Def}

In \cite{Quinlan} and \cite{dSPlargerank}, it was proved that a trivial spectrum subspace $\calV$ of $\Mat_n(\K)$ has dimension at most $\dbinom{n}{2}$.
In \cite{dSPlargeaffine}, the classification of trivial spectrum subspaces with the maximal dimension
was achieved for all fields with more than $3$ elements, and it was shown that the classification theorem failed for $\F_2$.
Our aim here is to use the classification of semi-primitive subspaces of $\Mat_3(\F_2)$ to obtain the full classification
of $3$-dimensional trivial spectrum subspaces of $\Mat_3(\F_2)$.

First of all, we introduce some notation:
\begin{Not}
Let $\calA$ and $\calB$ be linear subspaces, respectively, of $\Mat_n(\K)$ and $\Mat_p(\K)$. One denotes by $\calA \vee \calB$
the space of all matrices of the form
$$\begin{bmatrix}
A & C \\
[0]_{p \times n} & B
\end{bmatrix} \quad \text{with $A \in \calA$, $B \in \calB$ and $C \in \Mat_{n,p}(\K)$.}$$
\end{Not}

A trivial spectrum subspace $\calV$ of $\Mat_n(\K)$ is called \textbf{irreducible} when there is no proper and non-zero linear subspace
$F$ of $\K^n$ such that $\calV X \subset F$ for all $X \in F$. If the contrary holds we say that $\calV$ is reducible.
We have shown in \cite{dSPlargeaffine} that if a trivial spectrum subspace $\calV$ of $\Mat_n(\K)$ has dimension $\dbinom{n}{2}$,
then there is a list $(n_1,\dots,n_p)$ of positive integers such that $\sum_{k=1}^p n_k=n$, together with
irreducible trivial spectrum subspaces $\calV_1 \subset \Mat_{n_1}(\K), \; \calV_2 \subset \Mat_{n_2}(\K), \dots,\calV_p \subset \Mat_{n_p}(\K)$,
such that
$$\calV \simeq \calV_1 \vee \calV_2 \vee \cdots \vee \calV_p.$$
In order to obtain the structure of trivial spectrum spaces with the maximal dimension, it is therefore essential to classify
the irreducible ones up to similarity. The following result was obtained in \cite{dSPlargeaffine}:

\begin{theo}
Assume that $\# \K>2$.
The irreducible subspaces of $\Mat_n(\K)$ with trivial spectrum and dimension $\dbinom{n}{2}$
are the spaces of the form $P\Mata_n(\K)$, where $P \in \GL_n(\K)$ is a non-isotropic matrix, i.e.\ the quadratic form $X \mapsto X^T PX$ is non-isotropic.
Two such spaces $P\Mata_n(\K)$ and $Q\Mata_n(\K)$ are similar if and only if there is a non-zero scalar $\lambda$ such that
$Q$ is congruent to $\lambda P$, that is $Q=\lambda RPR^T$ for some $R \in \GL_n(\K)$.
\end{theo}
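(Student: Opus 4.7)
The plan is to establish the theorem in three stages: first verify that each space $P\Mata_n(\K)$ with $P$ non-isotropic has the asserted properties; then prove the converse, namely that every irreducible trivial-spectrum subspace of maximal dimension arises in this way; finally, check the similarity criterion.

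For the first stage, fix $P \in \GL_n(\K)$ non-isotropic and set $\calV := P\Mata_n(\K)$. The dimension is clear since $\dim \Mata_n(\K)=\binom{n}{2}$. Given an eigenvector relation $PAX = \lambda X$ with $A \in \Mata_n(\K)$ and $X \neq 0$, one has $AX = \lambda P^{-1}X$, whence $0 = X^T A X = \lambda X^T P^{-1} X$. Setting $Y := P^{-1}X$, the scalar $X^T P^{-1} X$ equals $Y^T P Y$, which is non-zero by non-isotropy, so $\lambda=0$ and $\calV$ has trivial spectrum. For irreducibility, I would use the classical identity $\Mata_n(\K)\, X = X^\bot$ (orthogonal complement for the standard bilinear form) valid for every non-zero $X$: any proper non-zero $\calV$-invariant subspace $F$ would then satisfy $P\,X^\bot \subset F$ for all $X \in F \setminus \{0\}$, forcing $\dim F = n-1$; but then two linearly independent vectors of $F$ would give $X^\bot = X'^\bot$, a contradiction when $n \geq 3$. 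The case $n=2$ is handled separately, since then $\dim \Mata_2(\K)=1$ and $\calV$ is one-dimensional, spanned by an invertible matrix, hence has no invariant line.

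For the converse, let $\calV$ be irreducible, trivial-spectrum, of dimension $\binom{n}{2}$. For every non-zero $X$, the image $\calV X$ of $M \mapsto MX$ meets $\K X$ only at $0$ by the trivial-spectrum hypothesis, so $\dim \calV X \leq n-1$; a dimension argument using $\dim \calV = \binom{n}{2}$ combined with irreducibility upgrades this to the equality $\dim \calV X = n-1$, so that $H_X := \calV X$ is a hyperplane not containing $X$. The main task is now to construct a non-isotropic quadratic form $q$ on $\K^n$ whose polar bilinear form $b$ satisfies $H_X = \{Y : b(X, Y) = 0\}$ for every $X \neq 0$: the Gram matrix $P$ of $b$ will then fulfil $X^T P(MX) = 0$ for all $X \in \K^n$ and $M \in \calV$, so by polarization $PM + (PM)^T = 0$, yielding $\calV \subset P^{-1}\Mata_n(\K)$, and the dimension count forces equality $\calV = P^{-1}\Mata_n(\K)$. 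The delicate point---which I expect to be the main obstacle---is actually to build $q$: one must verify that the projective map $\K X \in \Pgros(\K^n) \mapsto H_X$ is induced by a linear map $\K^n \to (\K^n)^\ast$, and here both the hypothesis $\#\K > 2$ (needed to polarize quadratic identities into bilinear ones) and the irreducibility of $\calV$ (to prevent the family $\{H_X\}$ from having a common non-zero vector) will be essential.

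For the similarity criterion, a direct computation shows that for every $R \in \GL_n(\K)$,
\[R\,(P\Mata_n(\K))\,R^{-1} \;=\; (RPR^T)\,\bigl(R^{-T}\Mata_n(\K) R^{-1}\bigr) \;=\; (RPR^T)\,\Mata_n(\K),\]
since $S \mapsto S B S^T$ preserves $\Mata_n(\K)$ for every $S \in \GL_n(\K)$. Hence $P\Mata_n(\K) \simeq Q\Mata_n(\K)$ if and only if $Q\,\Mata_n(\K) = (RPR^T)\,\Mata_n(\K)$ for some $R$, which reduces to identifying the invertible matrices $B$ satisfying $B\,\Mata_n(\K) = \Mata_n(\K)$. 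A short computation---testing the condition $BA \in \Mata_n(\K)$ on elementary alternating matrices of the form $XY^T - YX^T$---shows these $B$ are exactly the non-zero scalar matrices, whence $Q = \lambda RPR^T$ for some $\lambda \in \K^\ast$. Combining the three parts then yields the stated classification.
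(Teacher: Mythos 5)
You should first be aware that the paper does not prove this statement at all: it is quoted verbatim from \cite{dSPlargeaffine} (``The following result was obtained in \dots''), so there is no in-paper proof to compare your attempt against. Judged on its own, your proposal is sound and complete on the two easy fronts. The verification that $P\Mata_n(\K)$ has trivial spectrum (via $X^TAX=0$ and $X^TP^{-1}X=Y^TPY\neq 0$) and is irreducible (via $\Mata_n(\K)X=X^\bot$) is correct, modulo the small remark that for $n=2$ ``spanned by an invertible matrix'' alone does not preclude an invariant line --- you need to add that trivial spectrum plus invertibility leaves no eigenvalue at all. The similarity criterion is also handled correctly: the identity $R(P\Mata_n(\K))R^{-1}=(RPR^T)\Mata_n(\K)$ and the reduction to showing that $B\Mata_n(\K)=\Mata_n(\K)$ forces $B$ scalar (which follows from $Z^TB(XY^T-YX^T)Z=0$ giving $B^TZ\in\K Z$ for all $Z$) are standard and fine.

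The genuine gap is exactly where you locate it: the converse. Everything in the theorem that is hard lives in the construction of the non-isotropic matrix $P$ from an abstract irreducible trivial-spectrum space $\calV$ of dimension $\binom{n}{2}$, and your proposal only names this step without carrying it out. Concretely, three sub-claims are asserted but not proved: (1) that $\dim\calV X=n-1$ for every non-zero $X$ (the inequality $\leq n-1$ is clear, but the upgrade to equality from $\dim\calV=\binom{n}{2}$ and irreducibility is not a one-line dimension count); (2) that the assignment $\K X\mapsto H_X=\calV X$ is induced by a single linear map $\K^n\to(\K^n)^\ast$, i.e.\ that the hyperplanes $H_X$ come from one bilinear form --- this is the crux, it is precisely where the hypothesis $\#\K>2$ must enter (the theorem is false over $\F_2$, as the very section of the paper you are reading demonstrates), and a projective map sending each point to a hyperplane need not be linear without further argument; and (3) that the resulting $P$ is non-isotropic, which you never address --- an isotropic vector for $P$ would have to be converted back into a non-zero eigenvalue of some element of $\calV$. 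Since these three points constitute essentially the entire content of the classification half of the theorem, the proposal should be regarded as a correct reduction of the problem together with a complete proof of the similarity statement, rather than a proof of the theorem.
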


For $\F_2$, this result holds for $n=2$ as well (see the proof in Section 4.1 of \cite{dSPlargeaffine}).
In that case, the result is simple: An irreducible subspace of $\Mat_2(\F_2)$ with dimension $1$ is spanned by a matrix $M \in \Mat_2(\F_2)$
with no eigenvalue in $\F_2$. As $X^2+X+1$ is the only irreducible polynomial of degree $2$ over $\F_2$,
there is only one such matrix $M$ up to similarity: The companion matrix $M=\begin{bmatrix}
0 & 1 \\
1 & 1
\end{bmatrix}$.

Using this together with Proposition 16 of \cite{dSPlargeaffine} - which holds for all fields -
we deduce the $3$-dimensional reducible trivial spectrum subspaces of $\Mat_3(\F_2)$:

\begin{prop}\label{reducible}
Set $C:=\begin{bmatrix}
0 & 1 \\
1 & 1
\end{bmatrix}$.
Up to similarity, there are three reducible $3$-dimensional trivial spectrum subspaces of $\Mat_3(\F_2)$:
$$\F_2 C \vee \{0\}, \quad \{0\} \vee \F_2 C \quad \text{and} \quad \NT_3(\F_2).$$
\end{prop}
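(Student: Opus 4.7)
The plan is to apply the cited Proposition 16 of \cite{dSPlargeaffine} as a black box: it reduces the classification of the reducible trivial spectrum subspaces with maximal dimension to the classification of the irreducible trivial spectrum subspaces in smaller sizes, together with a dimension count. Since $\binom{3}{2}=3$, any reducible $3$-dimensional trivial spectrum subspace $\calV \subset \Mat_3(\F_2)$ is similar to $\calV_1 \vee \cdots \vee \calV_p$ with $p \geq 2$, each $\calV_k \subset \Mat_{n_k}(\F_2)$ irreducible and trivial-spectrum, and $n_1+\cdots+n_p=3$. Enumerating, the only partitions of $3$ with $p \geq 2$ parts are $(1,2)$, $(2,1)$ and $(1,1,1)$.

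Next I would identify the irreducible trivial-spectrum subspaces available in each size. In $\Mat_1(\F_2)$ the only trivial-spectrum subspace is $\{0\}$. In $\Mat_2(\F_2)$ a $1$-dimensional irreducible trivial-spectrum subspace is spanned by a single matrix with no eigenvalue in $\F_2$; since $X^2+X+1$ is the unique irreducible quadratic polynomial over $\F_2$, any such matrix is similar to $C$, so the subspace is similar to $\F_2 C$. This is exactly the observation already made in the paragraph preceding Proposition \ref{reducible}.

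Now I use the dimension identity $\dim(\calA \vee \calB) = \dim \calA+\dim \calB+n_1 n_2$ to pin down the pieces. For the partition $(1,2)$, $\dim \calV_1+\dim \calV_2+2=3$ forces $\dim \calV_1=0$ and $\dim \calV_2=1$, i.e.\ $\calV_1=\{0\}$ and $\calV_2$ similar to $\F_2 C$, giving $\calV\simeq \{0\}\vee \F_2 C$. The partition $(2,1)$ symmetrically yields $\calV\simeq \F_2 C\vee \{0\}$. The partition $(1,1,1)$ leaves no room in the $\calV_k$ (each is $\{0\}$) and produces $\calV\simeq \NT_3(\F_2)$.

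Finally I would check that these three spaces are pairwise non-similar. $\NT_3(\F_2)$ is the unique one consisting entirely of nilpotent matrices, so it is separated from the other two by the presence of a matrix with $C$-block. To distinguish $\F_2 C \vee \{0\}$ from $\{0\}\vee \F_2 C$, I would use a similarity invariant such as the dimension of the common kernel $\bigcap_{M\in \calV}\Ker M$: it is $\{0\}$ for $\F_2 C\vee \{0\}$ but contains $e_1$ for $\{0\}\vee \F_2 C$. I expect no real obstacle in this step; the entire argument is essentially bookkeeping once Proposition 16 is invoked. The only point requiring mild care is verifying that each candidate block $\calV_k$ is indeed irreducible (so that the decomposition given by Proposition 16 is unambiguously of the listed form), but this is immediate: $\F_2 C$ is irreducible because $C$ has no eigenvector in $\F_2^2$, and $\{0\}$ in $\Mat_1(\F_2)$ is vacuously irreducible.
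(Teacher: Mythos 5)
Your proposal is correct and follows essentially the same route as the paper, which likewise derives this proposition by combining the decomposition $\calV \simeq \calV_1 \vee \cdots \vee \calV_p$ into irreducible trivial spectrum blocks (Proposition 16 of the cited reference) with the observation that the only irreducible $1$-dimensional trivial spectrum subspace of $\Mat_2(\F_2)$ is, up to similarity, $\F_2 C$. Your enumeration of the compositions of $3$, the dimension count, and the similarity invariants used to separate the three spaces are all sound and simply make explicit the bookkeeping the paper leaves to the reader.
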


Now, we turn to the $3$-dimensional irreducible trivial spectrum subspaces of $\Mat_3(\F_2)$.
We shall prove the following result:

\begin{theo}[Classification of $3$-dimensional irreducible subspaces of $\Mat_3(\F_2)$ with trivial spectrum]\label{classtrivialspectrum}
Up to similarity, there are exactly three irreducible $3$-dimensional subspaces of $\Mat_3(\F_2)$ with trivial spectrum:
$$\calT_1:=\Biggl\{\begin{bmatrix}
a & b & a \\
a & a & c \\
b+c & a & a
\end{bmatrix} \mid (a,b,c)\in (\F_2)^3\Biggr\},$$
$$\calT_2:=\Biggl\{\begin{bmatrix}
a & b & a \\
a & a & c \\
0 & a & a
\end{bmatrix} \mid (a,b,c)\in (\F_2)^3\Biggr\} \quad \text{and} \quad
\calT_3:=\Biggl\{\begin{bmatrix}
a & b & a \\
a & 0 & c \\
0 & a & a
\end{bmatrix} \mid (a,b,c)\in (\F_2)^3\Biggr\}.$$
\end{theo}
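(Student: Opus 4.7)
The plan is to leverage the rank-$2$ classification of the earlier sections by extracting from any such $\calV$ a distinguished $2$-dimensional subspace of nilpotent matrices, normalising it up to similarity, and then rebuilding $\calV$ from the admissible $1$-dimensional complements under the trivial spectrum and irreducibility constraints.

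First I would translate the trivial spectrum condition into a polynomial identity. Over $\F_2$ the characteristic polynomial reads $\chi_M(X)=X^3+\tr(M)\,X^2+c_2(M)\,X+\det(M)$, where $c_2(M)$ denotes the sum of the principal $2\times 2$ minors of $M$, and so $\chi_M(1)\neq 0$ for every $M\in\calV$ is exactly the identity
\[
\tr(M)+c_2(M)+\det(M)=0\qquad\text{for every }M\in\calV.
\]
In particular, $M\in\calV$ is nilpotent if and only if $\tr(M)=c_2(M)=0$, so the linear subspace $\calV_0:=\ker(\tr\vert_\calV)$, of dimension $2$ or $3$, contains the whole nilpotent locus of $\calV$.

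The key structural step, and where I expect the main difficulty to lie, is to prove that $\calV$ contains a $2$-dimensional linear subspace $\calN$ of nilpotent matrices; equivalently, that the quadratic form $c_2$ vanishes on some $2$-dimensional subspace of $\calV_0$. When $\dim\calV_0=2$ one must show that $c_2\vert_{\calV_0}$ vanishes identically, and when $\dim\calV_0=3$ one must exhibit a totally isotropic plane of the ternary quadratic form $c_2\vert_{\calV_0}$ over $\F_2$. Both arguments must use the irreducibility of $\calV$ essentially, since otherwise there are subspaces (such as suitable subspaces of $\calJ_3(\F_2)$) for which no such plane exists. For $\calT_1$, $\calT_2$ and $\calT_3$ the relevant plane $\calN$ is the slice obtained by setting the parameter $a$ to zero.

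Given such a subspace $\calN$, I would note that it has upper rank at most $2$ and must actually contain a rank-$2$ nilpotent matrix; otherwise $\calN$ lies in the upper rank-$1$ locus, and Schur's classification of rank-$1$ spaces combined with the irreducibility of $\calV$ leads to a contradiction. Up to similarity I may then conjugate a rank-$2$ nilpotent in $\calN$ to the standard Jordan block $N_0:=\begin{bmatrix}0&1&0\\0&0&1\\0&0&0\end{bmatrix}$. The commutant of $N_0$ in $\Mat_3(\F_2)$ is the commutative algebra $\F_2[N_0]$ of dimension $3$, and the conjugation action of its unit group on the complement of $\F_2\,N_0$ inside $\calN$ leaves only finitely many normal forms. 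For each of them I would write $\calV=\calN\oplus\F_2\,M_0$, impose the cubic constraint $\tr+c_2+\det=0$ on each of the eight elements of $\calV$, discard the reducible possibilities, and read off the three resulting similarity classes as $\calT_1$, $\calT_2$ and $\calT_3$.

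Finally, to confirm that these three spaces are pairwise non-similar, I would invoke similarity invariants coming from the rank census: a short direct computation yields $4$, $4$, $5$ nilpotent matrices in $\calT_1$, $\calT_2$, $\calT_3$ respectively, which isolates $\calT_3$, while the number of rank-$1$ matrices is $0$ in $\calT_1$ and $2$ in each of $\calT_2$ and $\calT_3$, which separates $\calT_1$ from $\calT_2$.
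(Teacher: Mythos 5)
Your plan hinges on one central claim that you flag as ``the main difficulty'' but never actually argue: that an irreducible $3$-dimensional trivial spectrum subspace $\calV$ of $\Mat_3(\F_2)$ contains a $2$-dimensional linear subspace of nilpotent matrices. This is a genuine gap, not a routine verification. In your Case $\dim\calV_0=3$ you need a totally isotropic plane of the ternary quadratic form $c_2\vert_{\calV_0}$, but over $\F_2$ a nonzero ternary form need not have one: for instance $[1,1]\perp\langle 0\rangle$, i.e.\ $(x,y,z)\mapsto x^2+xy+y^2$, has isotropic locus equal to a line, and $(x,y,z)\mapsto xy+z^2$ has a $4$-point isotropic locus that is not a subspace. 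In your Case $\dim\calV_0=2$ you need $\det$ to vanish on all of $\calV_0$, i.e.\ that no trace-zero element of $\calV$ has characteristic polynomial $t^3+t+1$; note that $\calT_3$ does contain such an element (the matrix $A$ used at the end of Section 5.3), so this is a statement about the interaction of the trace form with $c_2$, not something that follows formally from the identity $\tr+c_2+\det=0$. Saying that irreducibility ``must be used essentially'' identifies where the work is but does not do it; as written, the proof does not go through. (Two smaller issues: you never verify that $\calT_1,\calT_2,\calT_3$ are themselves irreducible with trivial spectrum and pairwise non-similar except via the final counts, and those counts are off --- the numbers of nilpotent matrices are $4$, $4$, $6$ including zero, or $3$, $3$, $5$ excluding it, not $4$, $4$, $5$; the separation argument survives, but recheck it.)

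It is also worth noting that, despite your opening sentence, your concrete steps never use the rank-$2$ classification. The paper's proof runs entirely through it: to $\calV$ one attaches the space $\calM$ of bilinear forms $(N,Y)\mapsto Y^TNX$, proves $\calM$ is a primitive rank-$2$ subspace of $\Mat_3(\F_2)$, and then Theorem \ref{classrank2F2} splits the analysis into the cases $\calM^T\sim\Mata_3(\F_2)$ (excluded by isotropy of ternary forms over a finite field), $\calM^T\sim\calU_3(\F_2)$ (leading to $\calT_1$), and $\calM^T$ a subspace of $\calJ_3(\F_2)$ (leading to $\calT_2$, $\calT_3$). The $2$-dimensional nilpotent plane you want does appear there, but only in the $\calU_3$ case, where it is \emph{produced} by the identity $\tr(AB)=\tr(A)\tr(B)$ together with the fact that every matrix of $\calU_3(\F_2)$ is singular, plus Gerstenhaber's theorem to bound its dimension --- precisely the input your sketch is missing. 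If you want a self-contained normal-form argument of the kind you describe, you would need to supply an independent proof of the existence of the nilpotent plane (and then handle the additional normal forms for a possibly reducible nilpotent plane, such as $\Vect(E_{1,2},E_{2,3})$ and the rank-one planes, not only the two irreducible ones); otherwise the cleanest route is the paper's reduction to the primitive rank-$2$ classification.
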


Let us start by proving that $\calT_1$, $\calT_2$ and $\calT_3$ all satisfy the claimed properties and that they are pairwise unsimilar.
Remember that the identities $a(a+1)=0$ and $ab(a+b)=0$ hold for all $(a,b)\in (\F_2)^2$.
For all $(a,b,c)\in (\F_2)^3$, we compute
$$\begin{vmatrix}
a+1 & b & a \\
a & a+1 & c \\
b+c & a & a+1
\end{vmatrix}=(a+1)^3+bc(b+c)+a^3+ba(a+1)+ca(a+1)+(b+c)a(a+1)=a+1+a=1,$$
which shows that $\calT_1$ has a trivial spectrum. Similarly, for all $\varepsilon \in \F_2$ and all $(a,b,c)\in (\F_2)^3$, we have
$$\begin{vmatrix}
a+1 & b & a \\
a & \varepsilon a+1 & c \\
0 & a & a+1
\end{vmatrix}=(a+1)^2(\varepsilon a+1)+a^3+ca(a+1)+ba(a+1)=\varepsilon (a+1)a+(a+1)+a=1,$$
whence $\calT_2$ and $\calT_3$ have trivial spectra.

Moreover, we compute that, for all $(a,b,c)\in (\F_2)^3$,
$$\begin{vmatrix}
a & b & a \\
a & a & c \\
b+c & a & a
\end{vmatrix}=a^3+bc(b+c)+a^3+ba^2+ca^2+(b+c)a^2=0.$$
Therefore, every matrix in $\calT_1$ is singular. However, the matrix
$\begin{bmatrix}
1 & 0 & 1 \\
1 & 1 & 1 \\
0 & 1 & 1
\end{bmatrix}$ belongs to $\calT_2$ and has determinant $1$, whereas
$\begin{bmatrix}
1 & 1 & 1 \\
1 & 0 & 1 \\
0 & 1 & 1
\end{bmatrix}$ belongs to $\calT_3$ and has determinant $1$. Therefore, $\calT_1$ is unsimilar to both $\calT_2$ and $\calT_3$.
To see that $\calT_2$ and $\calT_3$ are unsimilar, we simply note that $\calT_3$ contains only trace zero matrices, whereas $\calT_2$
does not.

Finally, let us prove that $\calT_1$, $\calT_2$ and $\calT_3$ are all irreducible.
We have seen that if a $3$-dimensional trivial spectrum subspace of $\Mat_3(\F_2)$ is reducible, then it contains only singular matrices,
and it contains at least one rank $1$ matrix.
The spaces $\calT_2$ and $\calT_3$ are both irreducible because each one of them contains a non-singular matrix.
On the other hand, $\calT_1$ is irreducible because it contains no rank $1$ matrix: Indeed,
let $(a,b,c)\in (\F_2)^3 \setminus \{0\}$, and consider $M:=\begin{bmatrix}
a & b & a \\
a & a & c \\
b+c & a & a
\end{bmatrix}$. If $M$ has rank $1$, then it has trace $0$ since it cannot have a non-zero eigenvalue,
and this leads to $a=0$; Thus, $(b,c)\neq (0,0)$, and one sees that $M=\begin{bmatrix}
0 & b & 0 \\
0 & 0 & c \\
b+c & 0 & 0
\end{bmatrix}$ has rank $2$ because exactly two scalars among $b,c,b+c$ equal $1$.

\vskip 3mm
Next, we prove that every irreducible $3$-dimensional trivial spectrum subspace of $\Mat_3(\F_2)$
is similar to one of the $\calT_i$'s.
To achieve this, we shall use a new technique, featured in \cite{dSPAtkinsontoGerstenhaber},
that relates such subspaces to semi-primitive matrix spaces. We recall the basics now:
Let $\K$ be an arbitrary field and
$\calV$ be an irreducible trivial spectrum subspace of $\Mat_n(\K)$.
For each vector $X \in \K^n$, we obtain a bilinear form
$$(N,Y)\in \calV \times \K^n \longmapsto Y^T NX.$$
Choosing respective bases of $\calV$ and $\K^n$, we denote by $\calM$ the space of all matrices representing
the above bilinear forms in those bases.
Using the fact that $\calV$ is an irreducible trivial spectrum space, one obtains that
$\calM$ is reduced with upper rank less than $n$ and with dimension $n$.
If $\calM$ is not semi-primitive, if $\dim \calV=\dbinom{n}{2}$
and if, for all integers $p \in  \lcro 1,n-1\rcro$ and $m>1$,
the existence of a semi-primitive subspace of $\Mat_{m,p}(\K)$ implies $m\leq \dbinom{p}{2}$,
then the chain of arguments from Section 5 of \cite{dSPAtkinsontoGerstenhaber} yields that
$\calV$ is reducible, contradicting our assumptions.
In particular, we know from the classification of spaces of matrices with rank at most $1$ that a semi-primitive
subspace of $\Mat_{m,2}(\K)$ exists only if $m=1$, and that there is no semi-primitive subspace of $\Mat_{m,1}(\K)$.
Moreover, we have seen in the present article that the existence of a semi-primitive subspace of $\Mat_{m,3}(\K)$ implies that
$m\leq 3$. It follows that $\calM$ is semi-primitive whenever $n\leq 4$ and $\dim \calV=\dbinom{n}{2}$.

Now, we assume that $n=3$, $\dim \calV=3$ and $\K=\F_2$. Then, $\calM$ is a semi-primitive subspace of $\Mat_3(\F_2)$ with dimension $3$ and upper rank $2$,
and hence we deduce from Proposition \ref{basicsemiprimitiveprop}
that it is primitive. Thus, $\calM^T$ is also primitive with upper rank $2$.
One sees that $\calM^T$ represents the dual operator space $\widehat{\calV}$, whence
$\calV$ is equivalent to $\widehat{\calM^T}$.

From there, Theorem \ref{classrank2F2} yields key information on the structure of $\calM^T$, and hence on that of $\widehat{\calM^T}$.
Using that information will be of great help to understand the structure of $\calV$.
We distinguish between several cases.

\subsubsection{Case 1. $\calM^T$ is equivalent to $\Mata_3(\F_2)$.}

Then, $\widehat{\calM^T}$ is also equivalent to $\Mata_3(\F_2)$,
yielding a non-singular matrix $P \in \GL_3(\F_2)$ such that $\calV=P\,\Mata_3(\F_2)$ (remember that $Q^T \Mata_3(\F_2) Q=\Mata_3(\F_2)$
for all $Q \in \GL_3(\F_2)$). As every $3$-dimensional quadratic form over a finite field is isotropic, Proposition
10 of \cite{dSPlargeaffine} yields that $\calV$ cannot have a trivial spectrum, contradicting our assumptions.

\subsubsection{Case 2. $\calM^T$ is equivalent to $\calU_3(\F_2)$.}

As we have seen in the course of the proof of Lemma \ref{U3trans},
the dual operator space $\widehat{\calU_3(\F_2)}$ is equivalent to $\calU_3(\F_2)$, whence $\calV$ is equivalent to $\calU_3(\F_2)$.
In that case, we note that every matrix in $\calV$ has rank $2$ and that the matrices of $\calV$ have pairwise distinct ranges
and pairwise distinct kernels.

For $M \in \calV$, we write the characteristic polynomial of $M$ as $\chi_M(t)=t^3-\tr(M) t^2+q(M) t$,
and the condition that $1$ does not belong to the spectrum of $M$ reads $1-\tr(M)+q(M)=1$, whence
$$\forall M \in \calV, \; q(M)=\tr(M).$$
However, $q$ is a quadratic form with polar form $(A,B) \mapsto \tr(A)\tr(B)+\tr(AB)$, and hence
\begin{equation}\label{finalidentity}
\forall (A,B)\in \calV^2, \; \tr(AB)=\tr(A)\tr(B).
\end{equation}
The linear subspace $\calH:=\{M \in \calV : \; \tr(M)=0\}$ has codimension at most $1$ in $\calV$ and consists only of nilpotent matrices.
If $\dim \calH=3$, then $\calV=\calH$ and Gerstenhaber's theorem (see \cite{dSPGerstenhaberskew,Serezhkin})
would yield that $\calV$ is reducible. Thus, $\dim \calH=2$.
Using identity \eqref{finalidentity}, we see that $\calH$ is included in the radical of
the symmetric bilinear form $(A,B)\mapsto \tr(AB)$ on $\calV^2$.

\begin{claim}\label{claimH1ouH2}
We define
$$\calH_1:=\Biggl\{\begin{bmatrix}
0 & a & 0 \\
0 & 0 & b \\
a+b & 0 & 0
\end{bmatrix} \mid (a,b)\in (\F_2)^2\Biggr\} \quad \text{and} \quad
\calH_2:=\Biggl\{\begin{bmatrix}
0 & 0 & a \\
0 & 0 & b \\
b & a & 0
\end{bmatrix}\mid (a,b)\in (\F_2)^2\Biggr\}.$$
Then, $\calH$ is equivalent to $\calH_1$ or to $\calH_2$.
\end{claim}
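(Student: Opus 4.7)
The plan is to use the equivalence $\calV \sim \calU_3(\F_2)$ to reduce the problem to a classification of $2$-dimensional subspaces of $\calU_3(\F_2)$ up to equivalence. Writing $\calV = P\,\calU_3(\F_2)\,Q$ for some $P,Q \in \GL_3(\F_2)$ and setting $K := QP$, the trace-zero condition $\tr(PNQ) = 0$ on $N \in \calU_3(\F_2)$ reads $\tr(KN) = 0$. Since $\calH$ is a proper hyperplane of $\calV$, the resulting set $\calH' := \{N \in \calU_3(\F_2) : \tr(KN) = 0\}$ is a hyperplane of $\calU_3(\F_2)$, and the relation $\calH = P\,\calH'\,Q$ gives $\calH \sim \calH'$. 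Hence it suffices to show that every hyperplane of $\calU_3(\F_2)$ is equivalent to $\calH_1$ or to $\calH_2$.

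To distinguish the two target classes, I would use the invariant $d(\calW) := \dim \bigcap_{M \in \calW \setminus \{0\}} \im M$, which is preserved by equivalence because $\im(PMQ) = P\,\im M$. In any $2$-dimensional subspace of $\calV$, the three nonzero matrices have rank $2$ and, as recalled at the start of Case 2, pairwise distinct ranges; consequently $d(\calH') \in \{0,1\}$, as the value $2$ would force all three ranges to coincide. A short direct computation gives $d(\calH_1)=0$ and $d(\calH_2)=1$, which already shows $\calH_1 \not\sim \calH_2$ and partitions the seven hyperplanes of $\calU_3(\F_2)$ into two candidate groups.

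To finish, I would normalize each case. When $d(\calH') = 1$, a suitable left multiplication moves the common line to $\F_2 e_3$, so that the three ranges become the three $2$-dimensional subspaces of $\F_2^3$ through $e_3$; a subsequent right multiplication normalizes the three kernels to $\F_2 e_1$, $\F_2 e_2$, $\F_2(e_1+e_2)$, and the resulting matrix space coincides with $\calH_2$. When $d(\calH') = 0$, the three $2$-dimensional ranges correspond to three linearly independent linear forms on $\F_2^3$, which a left multiplication sends to the three coordinate hyperplanes; the three matrices then have supports on the cyclic off-diagonal positions $(1,2)$, $(2,3)$, $(3,1)$, and the linear dependence among them produces precisely the constraint $M_{3,1} = M_{1,2} + M_{2,3}$ built into $\calH_1$, which a final column operation aligns exactly.

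The main obstacle is the normalization step: checking that the chosen row and column operations really land the hyperplane in the prescribed form of $\calH_1$ or $\calH_2$ rather than in some other hyperplane sharing the same value of $d$, especially because the kernel normalization must be carried out without disturbing the range normalization already performed. A more pedestrian but perhaps more transparent alternative would be to enumerate the seven hyperplanes of $\calU_3(\F_2)$ directly, one for each nonzero linear combination of the coordinates $\mathbf{a}, \mathbf{b}, \mathbf{c}$, and to exhibit in each case a short explicit sequence of elementary row and column operations ending at $\calH_1$ or $\calH_2$ according to the value of $d$.
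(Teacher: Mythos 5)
Your reduction to hyperplanes of $\calU_3(\F_2)$ is clean as far as it goes, and the intermediate statement you aim at (each of the seven hyperplanes of $\calU_3(\F_2)$ is equivalent to $\calH_1$ or to $\calH_2$, sorted by the invariant $d$) is in fact true. But there is a structural problem with the whole route: transporting $\calH$ by $N\mapsto PNQ$ can only ever produce an \emph{equivalence}, whereas the paper's proof of this claim proceeds entirely by conjugations and actually establishes that $\calH$ is \emph{similar} to $\calH_1$ or to $\calH_2$. Although the claim is worded with ``equivalent,'' the stronger similarity conclusion is what the sequel relies on: the paper immediately continues with ``Assume that $\calH$ is similar to $\calH_2$. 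Then, no generality is lost in assuming that $\calH=\calH_2$,'' and the ensuing computations conjugate $\calV$ and $\calH$ simultaneously while exploiting data that survives conjugation but not two-sided equivalence --- the nilpotency of the elements of $\calH$, the identities $\tr(MA)=0$ for $A\in\calH$, and the trivial-spectrum condition on $\calV$. Passing through $\calV=P\,\calU_3(\F_2)\,Q$ discards exactly this information (for instance $P\calH_2Q$ need not consist of nilpotent matrices), so even a complete execution of your plan proves something strictly weaker than what the paper proves and needs. The paper instead picks a basis $(A_1,A_2)$ of $\calH$, uses $\tr(A_1A_2)=0$ together with the rank and nilpotency of the $A_i$, and splits on whether $\Ker A_2\subset\Ker A_1^2$ to reach a normal form by successive changes of basis of $(\F_2)^3$; the equivalence $\calV\sim\calU_3(\F_2)$ plays no role there.

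Separately, even within your own framework the normalization step --- which you rightly flag as the main obstacle --- is where all the content lies, and it is not yet a proof. In the $d=1$ case, sending the three kernels to $\F_2e_1$, $\F_2e_2$, $\F_2(e_1+e_2)$ presupposes that they are coplanar, i.e.\ that the analogous invariant of the transposed space also equals $1$; this happens to hold (the $d=1$ hyperplanes of $\calU_3(\F_2)$ are the transpose-stable ones), but it must be checked. More seriously, prescribing the three ranges and the three kernels does not determine the matrices: one must still track where each $M_i$ sends specific vectors (for instance, in the $d=1$ case, the common value $M_1e_2=M_2e_1=M_3e_1$ spans the common range line, and one can take $(M_1e_2,\,M_1e_3,\,M_2e_3)$ as a target basis) before concluding that the space \emph{is} $\calH_1$ or $\calH_2$ rather than merely sharing their invariants. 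Your fallback of enumerating the seven hyperplanes explicitly would settle the intermediate statement, but it would still deliver only equivalence, not the similarity on which the rest of the paper's Case 2 depends.
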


More generally, it can be shown that an irreducible subspace of nilpotent matrices of $\Mat_3(\F_2)$
is always equivalent to $\calH_1$ or to $\calH_2$.

\begin{proof}
We have just seen that $\forall (A,B)\in \calH^2, \; \tr(AB)=0$.
Take linearly independent matrices $A_1$ and $A_2$ in $\calH$.
We know that $A_1$ and $A_2$ are both rank $2$ nilpotent matrices with different kernels and different ranges.
We distinguish between two main cases, whether $\Ker A_2 \subset \Ker A_1^2$ holds or not. \\
\textbf{Case a.} $\Ker A_2 \subset \Ker A_1^2$. Then,
we see that we can conjugate $\calH$ with an invertible matrix so as to reduce the situation to the one where
$$A_1=\begin{bmatrix}
0 & 1 & 0 \\
0 & 0 & 1 \\
0 & 0 & 0
\end{bmatrix}
\quad \text{and} \quad
A_2=\begin{bmatrix}
? & 0 & ? \\
? & 0 & ? \\
? & 0 & ?
\end{bmatrix}.$$
Using $\tr(A_1A_2)=0$, we deduce that the entry of $A_2$ at the $(2,1)$-spot is zero. If the one at the $(3,1)$-spot were
zero, then the whole first column of $A_2$ would be zero since $A_2$ is nilpotent, contradicting the fact that $A_2$ has rank $2$.
Thus, if we denote by $(e_1,e_2,e_3)$ the standard basis of $(\F_2)^3$,
we see that $A_2e_1 \not\in \Ker A_1^2$ and $A_1A_2e_1=e_2$. Thus, we may now use $(e_1, e_2,A_2 e_1)$ as our new basis, thereby reducing
the situation to the one where
$$A_1=\begin{bmatrix}
0 & 1 & 0 \\
0 & 0 & 1 \\
0 & 0 & 0
\end{bmatrix}
\quad \text{and} \quad
A_2=\begin{bmatrix}
0 & 0 & a \\
0 & 0 & b \\
1 & 0 & c
\end{bmatrix} \quad \text{for some $(a,b,c)\in (\F_2)^3$.}$$
As $\tr A_2=0$, we have $c=0$.
The characteristic polynomial of $A_1+A_2$ then equals $t^3+at+(b+1)$, whence $a=0$ and $b=1$.
We conclude that $\calH$ is the space of all matrices of the form $\begin{bmatrix}
0 & x & 0 \\
0 & 0 & y \\
x+y & 0 & 0
\end{bmatrix}$ with $(x,y)\in (\F_2)^2$.

\textbf{Case b.} $\Ker A_2 \not\subset \Ker A_1^2$.
Then, we take $x \in \Ker A_2 \setminus \{0\}$ and we work with the basis $(A_1^2 x, A_1 x,x)$.
Thus, using the relations $\tr(A_2)=0$ and $\tr(A_1A_2)=0$, the situation is reduced to the one where
$$A_1=\begin{bmatrix}
0 & 1 & 0 \\
0 & 0 & 1 \\
0 & 0 & 0
\end{bmatrix}
\quad \text{and} \quad
A_2=\begin{bmatrix}
a & c & 0 \\
b & a & 0 \\
d & b & 0
\end{bmatrix} \quad \text{for some $(a,b,c,d)\in (\F_2)^4$.}$$
The characteristic polynomial of $A_2+A_1$ is
$t^3+t(a+bc)+(ab+d(c+1))$, and hence $a=bc$ and $ab=d(c+1)$.
Note that $(d,b)\neq (0,0)$ since $\im A_2 \neq \im A_1$ and $\rk A_2=\rk A_1$.
If $b=0$, then we deduce that $d=1$, and hence $a=0$ and $c=1$. In that case, we see once more that
$\calH$ is similar to the same space as in Case a.
Assume now that $b=1$. Then, $c=a=d(c+1)$, and hence $c=c^2=d(c+1)c=0$.
It follows that $a=0$ and $d=0$. Using the basis $(e_1,e_3,e_2)$, we conclude that $\calH$ is similar
to $\calH_2$.
\end{proof}

Now, we aim at discarding the second case in Claim \ref{claimH1ouH2}. Assume that $\calH$ is similar to $\calH_2$.
Then, no generality is lost in assuming that $\calH=\calH_2$.
As no matrix $A \in \calV$ satisfies $Ae_3=e_3$, we have $\dim \calV e_3\leq 2$, yielding a non-zero matrix $M \in \calV$ such that $M e_3=0$.
Then, $M \not\in \calH$. Let us write
$$M=\begin{bmatrix}
N & [0]_{2 \times 1} \\
L & 0
\end{bmatrix} \quad \text{with $N \in \Mat_2(\F_2)$ and $L \in \Mat_{1,2}(\F_2)$.}$$
As $\tr(MA)=0$ for all $A \in \calH$, we find that $L=0$.
As $M+A$ is singular for all $A \in \calH$, computing the determinant shows that, for $K:=\begin{bmatrix}
0 & 1 \\
1 & 0
\end{bmatrix}$, one has $X^T NKX=0$ for all $X \in (\F_2)^2$. One deduces that $NK$ is alternating, and hence $N=I_2$ as $N$ is non-zero.
We obtain that $\tr(M)=0$, contradicting the assumption that $M \not\in \calH$.

Thus, $\calH$ is similar to $\calH_1$. Without loss of generality, we may assume that $\calH=\calH_1$.
Now, let us choose a matrix $M$ in $\calV \setminus \calH$. Adding an appropriate matrix of $\calH$, we may assume that
$$M=\begin{bmatrix}
? & 0 & ? \\
? & ? & 0 \\
? & ? & ?
\end{bmatrix}.$$
As $\tr(MA)=0$ for all $A \in \calH$, while $\tr(M)=1$, we obtain $(a,b,c,d)\in (\F_2)^4$ such that
$$M=\begin{bmatrix}
a & 0 & d \\
d & a+b+1 & 0 \\
c & d & b
\end{bmatrix}.$$
If $d=0$, then we see that $a$, $b$ and $a+b+1$ are all eigenvalues of $M$, which is impossible since not all of them are zero.
It follows that $d=1$. Then, for all $(x,y)\in (\F_2)^2$, we deduce that
$$0=\begin{vmatrix}
a & x & 1 \\
1 & a+b+1 & y \\
c+x+y & 1 & b
\end{vmatrix}=cxy+(a+1)x+(b+1)y+\bigl((a+b+1)c+ab+1\bigr).$$
It follows that $c=0$, $a=1$ and $b=1$. Thus, $\calV=\calT_1$.

\subsubsection{Case 3. $\calM^T$ is equivalent to a subspace of $\calJ_3(\F_2)$.}

As $\calM^T$ has dimension $3$, we lose no generality in assuming that it equals one of the spaces
listed in Proposition \ref{J3dim3}.
To the space of all operators $M \in \calM^T \mapsto Mx$, with $x \in (\F_2)^2 \times \{0\}$,
then corresponds a linear subspace $\calH$ of $\calV$ with one of the following properties,
whether $\calM^T$ is represented by one of the generic matrices $\mathbf{M}_2,\mathbf{M}_4$ or by one of the generic matrices
$\mathbf{M}_1,\mathbf{M}_3$:
\begin{itemize}
\item Subcase 3.1. There is a basis $(A_1,A_2)$ of $\calH$ in which $\rk A_1=1$, $\rk A_2=2$, $\im A_1 \subset \im A_2$ and $\Ker A_1\oplus \Ker A_2=(\F_2)^3$.
\item Subcase 3.2. $\calH$ contains two rank $1$ matrices $A_1$ and $A_2$ such that $\im A_1 \neq \im A_2$ and $\Ker A_1 \neq \Ker A_2$.
\end{itemize}

Before we can tackle each case separately, we need a simple lemma:

\begin{lemma}\label{imagelemma}
Let $\calT$ be a trivial spectrum linear subspace of $\Mat_3(\F_2)$ in which all the elements have their range included in $(\F_2)^2 \times \{0\}$.
Then, there is a matrix $N \in \Mat_2(\K)$ whose spectrum does not contain $1$ and such that every matrix of $\calT$ splits up as
$$M=\begin{bmatrix}
\lambda N & [?]_{2 \times 1} \\
[0]_{1 \times 2} & 0
\end{bmatrix}\quad \text{for some $\lambda \in \F_2$.}$$
\end{lemma}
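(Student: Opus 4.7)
The plan is to write each $M \in \calT$ in block form and to show that the $2 \times 2$ upper-left block is constrained to a subspace of dimension at most $1$; the matrix $N$ will then simply be a generator of that subspace.

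First, since $\im M \subset (\F_2)^2 \times \{0\}$ for every $M \in \calT$, the third row of $M$ must vanish, so
$$M = \begin{bmatrix} N(M) & C(M) \\ [0]_{1 \times 2} & 0 \end{bmatrix}$$
for some $N(M) \in \Mat_2(\F_2)$ and $C(M) \in (\F_2)^2$, and the map $N : \calT \to \Mat_2(\F_2)$ is linear. Expanding the characteristic polynomial of $M$ along its last row gives $\chi_M(t) = t\,\chi_{N(M)}(t)$, so $\Sp M = \{0\} \cup \Sp N(M)$. The trivial-spectrum assumption on $\calT$ therefore amounts to saying that no element of the linear subspace $\calN := N(\calT)$ of $\Mat_2(\F_2)$ has $1$ as an eigenvalue. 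A direct computation yields, over $\F_2$, the identity $\det(I_2 + A) = 1 + \tr A + \det A$, and so this condition reads $\tr A + \det A = 0$ for every $A \in \calN$.

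Next I would establish that $\dim \calN \leq 1$ by a short enumeration in $\Mat_2(\F_2)$. The equation $\tr A = \det A$ cuts out exactly six matrices: the zero matrix, the three rank-$1$ nilpotents
$$\begin{bmatrix}0&1\\0&0\end{bmatrix},\quad \begin{bmatrix}0&0\\1&0\end{bmatrix},\quad \begin{bmatrix}1&1\\1&1\end{bmatrix},$$
and the two order-$3$ elements $\begin{bmatrix}1&1\\1&0\end{bmatrix}$ and $\begin{bmatrix}0&1\\1&1\end{bmatrix}$ of $\GL_2(\F_2)$. A routine check of the ten pairwise sums of distinct nonzero matrices in this list shows that each such sum satisfies $\tr + \det = 1$, hence lies outside the list. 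Since a $2$-dimensional subspace of $\Mat_2(\F_2)$ consists of $0$ together with two independent elements and their sum, no such subspace can be contained in our six-element set, which forces $\dim \calN \leq 1$.

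Finally, if $\calN = \{0\}$ the conclusion is trivial by choosing any $2 \times 2$ matrix $N$ whose spectrum omits $1$ and setting $\lambda(M) = 0$ for every $M$; otherwise $\calN = \{0, N\}$ for a unique nonzero $N$, which satisfies $1 \notin \Sp N$ by construction, and each $M \in \calT$ is associated with the unique scalar $\lambda(M) \in \F_2$ such that $N(M) = \lambda(M)\,N$. Either way, the announced block decomposition is obtained. The only mildly delicate point is the enumeration step, but as $\Mat_2(\F_2)$ has only $16$ elements, this presents no real obstacle.
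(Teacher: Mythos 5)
Your proof is correct and follows the same route as the paper: write each $M$ in block upper-triangular form, observe that the upper-left $2\times 2$ blocks form a trivial spectrum subspace of $\Mat_2(\F_2)$, and conclude that this subspace has dimension at most $1$. The only difference is at that last step: the paper invokes the general bound $\dim \leq \binom{n}{2}$ for trivial spectrum subspaces of $\Mat_n(\K)$ (Theorem 9 of \cite{dSPlargerank}) with $n=2$, whereas you verify the bound directly by enumerating the six matrices of $\Mat_2(\F_2)$ satisfying $\tr A=\det A$ and checking that no two independent ones sum to a third. Your enumeration is accurate, and it buys a self-contained argument at the cost of a case check that only works over $\F_2$; the citation buys brevity and field-independence.
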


\begin{proof}
We can write every matrix of $\calT$ as
$$M=\begin{bmatrix}
K(M) & [?]_{2 \times 1} \\
[0]_{1 \times 2} & 0
\end{bmatrix}\quad \text{with $K(M) \in \Mat_2(\F_2)$.}$$
Then, $K(\calH)$ is a trivial spectrum subspace of $\Mat_2(\F_2)$.
By Theorem 9 of \cite{dSPlargerank}, we have $\dim K(\calH) \leq 1$. The result follows by taking $N$ as the sole non-zero vector of $K(\calH)$
if $\dim K(\calH)=1$, and $N=0$ otherwise.
\end{proof}

We seek to discard Subcase 3.1. Assume that it holds and note that the $2$-dimensional space $\im A_2$ contains the range of every matrix in $\calH$.
Conjugating $\calV$ with a well-chosen invertible matrix, we lose no generality in assuming that $\im A_2=\K^2 \times \{0\}$.
The above lemma yields some $N \in \Mat_2(\F_2)$ for which $1$ is not an eigenvalue and such that every matrix $M$ of $\calH$ splits up as
$$M=\begin{bmatrix}
\lambda N & [?]_{2 \times 1} \\
[0]_{1 \times 2} & 0
\end{bmatrix}\quad \text{for some $\lambda \in \F_2$.}$$
If $N$ were singular, we would find a non-zero vector that belongs to the kernel of all the matrices in $\calH$, contradicting
the fact that $\Ker A_1 \oplus \Ker A_2=(\F_2)^3$. Therefore, $N \in \GL_2(\F_2)$, and hence the
characteristic polynomial of $N$ must be $t^2+t+1$.
As $A_2$ has rank $2$, we must have
$$A_2=\begin{bmatrix}
N & [?]_{2 \times 1} \\
[0]_{1 \times 2} & 0
\end{bmatrix}.$$
In turn, this shows that $\Ker A_2 \oplus \im A_2=(\F_2)^3$, whence
an additional conjugation allows one to assume that $\Ker A_2$ is the span of $\begin{bmatrix}
0 & 0 & 1
\end{bmatrix}^T$. On the other hand, as $A_1$ has rank $1$, we must have
$$A_1=\begin{bmatrix}
[0]_{2 \times 2} & C \\
[0]_{1 \times 2} & 0
\end{bmatrix} \quad \text{for some $C \in (\F_2)^2 \setminus \{0\}$.}$$
Since $N$ has no eigenvalue in $\F_2$, we see that $C$ and $NC$ are linearly independent, and we note that $N(NC)=C+NC$.
Conjugating by the change of bases matrix
$$Q:=\begin{bmatrix}
C & NC & [0]_{2 \times 1} \\
0 & 0 & 1
\end{bmatrix} \in \GL_3(\F_2),$$
we reduce the situation further to the point where
$$A_1=\begin{bmatrix}
0 & 0 & 1 \\
0 & 0 & 0 \\
0 & 0 & 0
\end{bmatrix} \quad \text{and} \quad A_2=\begin{bmatrix}
0 & 1 & 0 \\
1 & 1 & 0 \\
0 & 0 & 0
\end{bmatrix}.$$
Then, we extend $(A_1,A_2)$ into a basis $(A_1,A_2,A_3)$ of $\calV$.
Choosing $A_3$ well, we may assume that
$$A_3=\begin{bmatrix}
a & 0 & 0 \\
b & c & d \\
e & f & g
\end{bmatrix} \quad \text{for some $(a,b,c,d,e,f,g)\in (\F_2)^7$.}$$
Note that $a=0$ since $A_3$ has no non-zero eigenvalue.
Denote by $(e_1,e_2,e_3)$ the standard basis of $(\F_2)^3$.
Recall that, for all non-zero vectors $x \in (\F_2)^3$, none of the spaces $\calV^T x$ and $\calV x$ contains
$x$, whence $\dim \calV^T x<3$ and $\dim \calV x<3$.
In particular, $\dim \calV(e_2+e_3)<3$ and $\dim \calV(e_1+e_3)<3$ yield $f=g$ and $e=g$, respectively.
As $\calV$ is irreducible, we deduce that $e=f=g=1$.
Then we obtain $\calV^T(e_1+e_3)=(\F_2)^3$, contradicting the above remarks.

\vskip 3mm
We have just shown that Subcase 3.1 cannot hold. Thus, we obtain two rank $1$ matrices $A_1$ and $A_2$ in $\calV$
with distinct kernels and ranges. Setting $P:=\im A_1+\im A_2$, we lose no generality in assuming that $P=(\F_2)^2 \times \{0\}$.
Let us consider a matrix $N \in \Mat_2(\F_2)$ obtained by applying Lemma \ref{imagelemma} to the trivial spectrum space
$\calH:=\Vect(A_1,A_2)$. As $\Ker A_1 \neq \Ker A_2$, we must have $N \neq 0$,
and, without loss of generality, we may assume that
$$A_1=\begin{bmatrix}
N & [?]_{2 \times 1} \\
[0]_{1 \times 2} & 0
\end{bmatrix}.$$
As $A_1$ has rank $1$ and has no non-zero eigenvalue in $\F_2$, it is nilpotent, whence $N$ is nilpotent.
Thus, as $A_1$ has rank $1$, no further generality is lost in assuming that
$$N=\begin{bmatrix}
0 & 1 \\
0 & 0
\end{bmatrix} \quad \text{and} \quad
A_1=\begin{bmatrix}
0 & 1 & \alpha \\
0 & 0 & 0 \\
0 & 0 & 0
\end{bmatrix} \quad \text{for some $\alpha \in \F_2$.}$$
As $A_2$ has rank $1$ and $\im A_2 \neq \im A_1$, the only option is that
$$A_2=\begin{bmatrix}
0 & 0 & \beta \\
0 & 0 & 1 \\
0 & 0 & 0
\end{bmatrix} \quad \text{for some $\beta \in \F_2$.}$$
From there, conjugating $\calV$ with $Q:=\begin{bmatrix}
1 & -\beta & 0 \\
0 & 1 & \alpha \\
0 & 0 & 1
\end{bmatrix}$ reduces the situation to the one where
$$\calH=\Biggl\{
\begin{bmatrix}
0 & x & 0 \\
0 & 0 & y \\
0 & 0 & 0
\end{bmatrix}\mid (x,y)\in (\F_2)^2\Biggr\}.$$
Now, we find a matrix $A_3$ of $\calV \setminus \calH$ of the form
$$A_3=\begin{bmatrix}
a & 0 & b \\
c & d & 0 \\
e & f & g
\end{bmatrix} \quad \text{with $(a,b,c,d,e,f,g)\in (\F_2)^7$.}$$
Noting that $\calV=\Vect(A_1,A_2,A_3)$, we find $1=\begin{vmatrix}
a+1 & x & b \\
c & d+1 & y \\
e & f & g+1
\end{vmatrix}$ for all $(x,y)\in (\F_2)^2$. Expanding the right-hand side of this equality, we find a polynomial with degree
at most $1$ in each variable $x$ and $y$, of the form
$$e(xy)+c(g+1)x+f(a+1)y+?.$$
It follows that $e=0$, $c(g+1)=0$ and $f(a+1)=0$.
If $c=0$, then $a$ is an eigenvalue of $A_3$, and hence the first column of $A_3$ is zero. It would follow that $\calV$ is reducible, contradicting
our assumptions. Thus, $c=1$, and one finds $f=1$ with the same line of reasoning.
One deduces from the above equalities that $g=1$ and $a=1$. As $1$ is not an eigenvalue of $A_3$, we must have $b=1$.
Finally, we conclude that $\calV=\calT_2$ or $\calV=\calT_3$, whether $d=1$ or $d=0$. This finishes the proof of Theorem \ref{classtrivialspectrum}.

\begin{Rem}
Using the elementary operation $L_2 \leftarrow L_2+L_3$, one sees that $\calT_2$ and $\calT_3$ are equivalent.
Moreover, one can check that the dual operator space of $\calT_3$ is equivalent to the matrix space associated with
$\mathbf{M}_3$.
\end{Rem}

\subsection{Affine subspaces of non-singular matrices of $\Mat_3(\F_2)$}\label{affinenonsingularsection}

Using the classification of $3$-dimensional trivial spectrum subspaces of $\Mat_3(\F_2)$, we are now able
to classify, up to equivalence, the $3$-dimensional affine subspaces of $\Mat_3(\F_2)$ that are included in $\GL_3(\F_2)$.

We need only classify those affine subspaces that contain $I_3$. Given a linear subspace $H$ of $\Mat_3(\F_2)$,
the affine subspace $I_3+H$ is included in $\GL_3(\F_2)$ if and only if $H$ is a trivial spectrum space.
Note that if $H$ and $H'$ are similar linear subspaces, then $I_3+H$ and $I_3+H'$ are equivalent affine spaces
(the converse does not hold in general).
Therefore, using Proposition \ref{reducible} and Theorem \ref{classtrivialspectrum}, we obtain that each $3$-dimensional affine subspace that is included in
$\GL_3(\F_2)$ is equivalent to one of the following six affine subspaces, where we have set
$C:=\begin{bmatrix}
0 & 1 \\
1 & 1
\end{bmatrix}$:
$$I_3+\NT_3(\F_2), \quad I_3+(\F_2 C \vee \{0\}), \quad I_3+(\{0\} \vee \F_2 C), \quad
I_3+\calT_1, \quad  I_3+\calT_2, \quad  \text{and} \quad I_3+\calT_3.$$
It remains to investigate potential equivalences between those six spaces.
This involves the following lemma:

\begin{lemma}\label{irreduciblelemma}
Let $\K$ be an arbitrary field, and $H_1$ and $H_2$ be linear subspaces of $\Mat_n(\K)$ for which the
affine spaces $I_n+H_1$ and $I_n+H_2$ are equivalent. Then, $H_1$ is irreducible if and only if $H_2$ is irreducible.
\end{lemma}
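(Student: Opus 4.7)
The plan is to unpack what affine equivalence of $I_n+H_1$ and $I_n+H_2$ really gives, and then reduce the problem to a pure similarity argument combined with one extra twist. Fix $P,Q\in \GL_n(\K)$ such that $P(I_n+H_1)Q=I_n+H_2$. Evaluating at the element $I_n$ on the left shows that $A:=PQ$ lies in $I_n+H_2$, and comparing the translated linear parts yields $PH_1Q=H_2$. Setting $P_1:=A^{-1}P$ (so that $Q=P_1^{-1}$) and $H_1':=P_1 H_1 P_1^{-1}$, which is similar to $H_1$ and hence shares its irreducibility status, the equivalence rewrites as the clean identity
$$A\,(I_n+H_1')=I_n+H_2.$$

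Since $I_n$ lies in the right-hand side, there exists $M_0\in I_n+H_1'$ with $AM_0=I_n$, i.e.\ $A^{-1}\in I_n+H_1'$. This is the crucial consequence of affine equivalence that would not be available from mere equivalence of the linear parts $H_1\sim H_2$. From here I would transfer reducibility: if $F\subset \K^n$ is a proper nonzero subspace with $H_1F\subset F$, then $F':=P_1F$ is a proper nonzero subspace with $H_1'F'\subset F'$, and hence $MF'\subset F'$ for every $M\in I_n+H_1'$. Applying this to $M=A^{-1}$ gives $A^{-1}F'\subset F'$, and hence $AF'=F'$ by dimension.

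For an arbitrary $N\in I_n+H_2$, writing $N=AM$ with $M\in I_n+H_1'$ then yields $NF'=AMF'\subset AF'=F'$, so $F'$ witnesses the reducibility of $H_2$. Swapping the roles of $H_1$ and $H_2$ (using $P^{-1}$ and $Q^{-1}$, which realize the reverse equivalence) gives the converse implication. The main obstacle is conceptual rather than computational: one must notice that the condition $I_n\in I_n+H_2$, combined with the affine-equivalence identity, forces $A^{-1}\in I_n+H_1'$; once that membership is in hand the transfer of the invariant subspace is immediate, and the fact that $P$ and $Q$ are generally distinct ceases to be an obstruction.
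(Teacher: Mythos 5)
Your proof is correct and rests on the same key observation as the paper's: since $I_n$ belongs to both affine spaces, the image of the invariant subspace under the equivalence must coincide with its preimage (your identity $AF'=F'$ is exactly the paper's step $P_1=P_2$), after which the invariant subspace transfers to $H_2$ by subtracting the identity. Your preliminary normalization $A(I_n+H_1')=I_n+H_2$ is only a cosmetic repackaging of the same argument.
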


\begin{proof}
Assume that $H_1$ is reducible, and consider a non-zero proper linear subspace $P$ of $\K^n$ such that
$Mx \in P$ for all $M \in H_1$ and $x \in P$. It follows that
$Mx \in P$ for all $M \in (I_n+H_1)$ and $x \in P$. Using the assumed equivalence between $I_n+H_1$ and $I_n+H_2$,
we recover non-zero proper linear subspaces $P_1$ and $P_2$ of $\K^n$ such that
$Mx \in P_2$ for all $M \in (I_n+H_2)$ and $x \in P_1$.
In particular, this holds for $M=I_n$, whence $P_2=P_1$. Thus, we conclude that $Mx=(I_n+M)x-I_n x \in P_1$ for all $x \in P_1$ and all $M \in H_2$,
whence $H_2$ is reducible. Symmetrically, one obtains that $H_1$ is reducible whenever $H_2$ is reducible.
\end{proof}

Using the above lemma, we deduce that any one of the spaces $I_3+\NT_3(\F_2)$, $I_3+(\F_2 C \vee \{0\})$, $I_3+(\{0\} \vee \F_2 C)$
is inequivalent to any one of the spaces $I_3+\calT_i$ for $i \in \{1,2,3\}$.
Moreover, by Proposition 17 of \cite{dSPlargeaffine}, the spaces $I_3+\NT_3(\F_2)$, $I_3+(\F_2 C \vee \{0\})$ and $I_3+(\{0\} \vee \F_2 C)$
are pairwise inequivalent.
It remains only to investigate possible equivalences between the spaces $I_3+\calT_1$, $I_3+\calT_2$ and $I_3+\calT_3$.

Note that equivalent affine spaces have equivalent translation vector spaces. However, $\calT_1$ is inequivalent to both $\calT_2$ and $\calT_3$
as $\calT_1$ contains only rank $2$ matrices, whereas $\calT_2$ and $\calT_3$ both contain rank $1$ matrices.

Finally, we show that $I_3+\calT_2$ is equivalent to $I_3+\calT_3$.
To see this, we choose an arbitrary matrix $A \in \calT_3 \cap \GL_3(\F_2)$
(an obvious choice is $A=\begin{bmatrix}
1 & 0 & 1 \\
1 & 0 & 0 \\
0 & 1 & 1
\end{bmatrix}$), so that the characteristic polynomial of $A$ is $t^3+t+1$.
Thus, the characteristic polynomial of $I_3+A$ is $t^3+t^2+1$, so that $\tr\bigl((I_3+A)^{-1}\bigr)=0$.
We note that
$$I_3+\calT_3=(I_3+A)+\calT_3=(I_3+A)\bigl(I_3+(I_3+A)^{-1}\calT_3\bigr) \sim I_3+(I_3+A)^{-1}\calT_3.$$
Using Lemma \ref{irreduciblelemma}, we see that $(I_3+A)^{-1}\calT_3$ is an irreducible trivial spectrum space with dimension $3$;
As it is equivalent to $\calT_3$, it cannot be equivalent to $\calT_1$, whence it is similar to $\calT_2$ or to $\calT_3$.
However, since $\tr((I_3+A)^{-1}A)=\tr(I_3)+\tr((I_3+A)^{-1})=1$, we see that $(I_3+A)^{-1}\calT_3$
contains a matrix with trace $1$, whence $(I_3+A)^{-1}\calT_3$ is unsimilar to $\calT_3$.
We conclude that $(I_3+A)^{-1}\calT_3$ is similar to $\calT_2$, whence $I_3+\calT_2 \sim I_3+\calT_3$.
Let us sum up our results:

\begin{theo}[Classification of $3$-dimensional affine subspaces of non-singular matrices of $\Mat_3(\F_2)$]
Set $C:=\begin{bmatrix}
0 & 1 \\
1 & 1
\end{bmatrix}$.
Up to equivalence, exactly five $3$-dimensional affine subspaces of $\Mat_3(\F_2)$ are included in
$\GL_3(\F_2)$: They are the ones which contain $I_3$ and with respective translation vector spaces
$\NT_3(\F_2)$, $\F_2 C \vee \{0\}$, $\{0\} \vee \F_2 C$,
$\calT_1$ and $\calT_2$.
\end{theo}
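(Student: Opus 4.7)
The plan is to classify the relevant translation vector spaces and then to sort out which yield equivalent affine spaces. First I would translate so as to assume the affine space contains $I_3$; then an affine subspace $I_3+H$ lies in $\GL_3(\F_2)$ if and only if, for every $M\in H$, the matrix $I_3+M$ is invertible, i.e.\ $-1=1$ is not an eigenvalue of $M$. Hence $I_3+H\subset \GL_3(\F_2)$ is equivalent to $H$ being a trivial spectrum subspace. I would also note the easy implication: if $H$ and $H'$ are similar, then $I_3+H$ and $I_3+H'$ are equivalent (by the conjugation that carries $H$ to $H'$). Combining Proposition \ref{reducible} with Theorem \ref{classtrivialspectrum}, the $3$-dimensional trivial spectrum subspaces are pairwise similar to one of the six spaces $\NT_3(\F_2)$, $\F_2 C\vee\{0\}$, $\{0\}\vee\F_2 C$, $\calT_1$, $\calT_2$, $\calT_3$, which produces a list of six candidates.

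The bulk of the work is in showing that exactly five equivalence classes arise. I would first record a short invariance lemma: if the affine spaces $I_3+H_1$ and $I_3+H_2$ are equivalent then $H_1$ is reducible if and only if $H_2$ is. This follows because a proper non-zero invariant subspace $P$ for $H_1$ is automatically invariant under $I_3+H_1$; transporting through the equivalence gives subspaces $P_1,P_2$ with $(I_3+H_2)P_1\subset P_2$, and evaluating at $I_3$ forces $P_2=P_1$, so $H_2$ is reducible as well. Using this lemma, no $I_3+\calT_i$ is equivalent to any of the three reducible candidates. Pairwise inequivalence among the reducible ones I would simply invoke from Proposition 17 of \cite{dSPlargeaffine}. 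Among the three irreducible candidates, equivalent affine spaces have equivalent translation spaces, so I can separate $\calT_1$ from $\calT_2$ and $\calT_3$ using the fact, already established when building $\calT_1$, that $\calT_1$ has no rank $1$ element whereas $\calT_2$ and $\calT_3$ do.

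The main obstacle, and the most creative step, is to prove the remaining collapse $I_3+\calT_2\sim I_3+\calT_3$. The idea I would use is a translation-by-a-unit trick inside $\calT_3$ itself. Picking an explicit invertible $A\in\calT_3$ (for instance $A=\begin{bmatrix}1 & 0 & 1 \\ 1 & 0 & 0 \\ 0 & 1 & 1\end{bmatrix}$, whose characteristic polynomial is $t^3+t+1$), one writes
\[ I_3+\calT_3=(I_3+A)+\calT_3=(I_3+A)\bigl(I_3+(I_3+A)^{-1}\calT_3\bigr), \]
so $I_3+\calT_3$ is equivalent to $I_3+H'$ where $H':=(I_3+A)^{-1}\calT_3$. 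Now $H'$ is again a $3$-dimensional trivial spectrum space (by the translation argument), it is still irreducible (equivalence preserves irreducibility by the lemma above), and it contains rank $1$ elements, so it must be similar to $\calT_2$ or $\calT_3$. To exclude similarity to $\calT_3$ I would compute $\tr((I_3+A)^{-1}A)$: since the characteristic polynomial of $I_3+A$ is $t^3+t^2+1$, one reads off $\tr((I_3+A)^{-1})=0$, whence $\tr((I_3+A)^{-1}A)=\tr(I_3)+\tr((I_3+A)^{-1})=1$. Thus $H'$ contains a matrix with nonzero trace, while every matrix of $\calT_3$ has trace $0$, so $H'$ is not similar to $\calT_3$ and must be similar to $\calT_2$. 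This gives $I_3+\calT_3\sim I_3+\calT_2$ and completes the list of exactly five equivalence classes. The delicate point will be to make sure the tricky step genuinely forces similarity to $\calT_2$; the trace invariant is the key observation that makes it work without needing any explicit conjugation.
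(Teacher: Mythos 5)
Your proposal is correct and follows essentially the same route as the paper: reduction to trivial spectrum translation spaces, the irreducibility-invariance lemma to separate the reducible candidates from the $\calT_i$'s, the rank-one criterion to isolate $\calT_1$, and the same translation-by-$(I_3+A)$ trick with the trace computation $\tr\bigl((I_3+A)^{-1}A\bigr)=1$ to collapse $I_3+\calT_2$ and $I_3+\calT_3$. The only cosmetic difference is that you rule out similarity of $(I_3+A)^{-1}\calT_3$ to $\calT_1$ via the presence of rank-one elements, where the paper invokes equivalence of translation spaces; both work.
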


\end{document}